\providecommand{\U}[1]{\protect\rule{.1in}{.1in}}
\newtheorem{theorem}{Theorem}[section]
\newtheorem{lemma}[theorem]{Lemma}
\newtheorem{definition}[theorem]{Definition}
\newtheorem{example}[theorem]{Example}
\newtheorem{remark}[theorem]{Remark}
\numberwithin{equation}{section}
\newtheorem{claim}{Claim}
\newtheorem{corollary}[theorem]{Corollary}
\newtheorem{notation}{Notation}
\newtheorem{proposition}[theorem]{Proposition}
\newenvironment{proof}[1][Proof]{\noindent\textbf{#1.} }{\ \rule{0.5em}{0.5em}}
\begin{document}

\title{Rank two filtered $(\varphi,N)$-modules
with Galois descent data and coefficients}
\author{Gerasimos Dousmanis\\M\"{u}nster Universit\"{a}t, SFB 478 Geometrische Strukturen in der\\Mathematik, Hittorfstra\ss e 27, 48149 M\"{u}nster Deutschland\\Email:
{\href{mailto:makis.dousmanis@math.uni-muenster.de}{makis.dousmanis@math.uni-muenster.de}%
}}
\maketitle

\begin{abstract}
Let $K$ be any finite extension of $%
\mathbb{Q}
_{p},$ $L$ any finite Galois extension of $K$, and $E$ any finite large
enough coefficient field containing $L.$ We classify two-dimensional $L$%
-semistable $E$-representations of $G_{K}$, by listing the isomorphism
classes of rank two weakly admissible filtered $(\varphi ,N,L/K,E)$-modules.
\end{abstract}

\section{Introduction}

\noindent Let $K$ be any finite extension of $\mathcal{%
\mathbb{Q}
}_{p}$ and $\rho:G_{K}\rightarrow GL_{n}(\bar{%
\mathbb{Q}%
}_{p})$ any continuous $n$-dimensional representation of $G_{K}=$ Gal$(\bar{%
\mathbb{Q}%
}_{p}/K).$ Let $L$ be any finite Galois extension of $K.$ The representation
$\rho$ is called $L$-semistable if it becomes semistable when restricted to
$G_{L}.$ The field of definition $E$ of $\rho$ is a finite extension of
$\mathcal{%
\mathbb{Q}
}_{p}$ which may be extended to contain $L.$ Let $k\geq1$ be any integer. By a
variant of fundamental work of Colmez and Fontaine (\cite{CF00}), the category
of $L$-semistable $E$-representations of $G_{K}$ with Hodge-Tate weights in
the range $\{0,1,...,k-1\}$ is equivalent to the category of weakly admissible
filtered $\left(  \varphi,N,L/K,E\right)  $-modules $D\ $%
(Def.\ \ref{the filtered definition}),\ such that Fil$^{0}(L\otimes_{L_{0}%
}D)=L\otimes_{L_{0}}D\ $and Fil$^{k}(L\otimes_{L_{0}}D)=0.$ We classify
two-dimensional $L$-semistable $E$-representations of $G_{K},$ by listing the
isomorphism classes of rank two weakly admissible filtered $\left(
\varphi,N,L/K,E\right)  $-modules.

When $K\neq\mathcal{%
\mathbb{Q}
}_{p}$ interesting new phenomena occur, for example there exist disjoint
infinite families of irreducible two-dimensional crystalline representations
of $G_{K},$ sharing the same characteristic polynomial and filtration (Cor.
\ref{a}). Such families have been constructed in \cite{DO08} and their
semisimplified modulo $p$ reductions have been computed in \cite{DO09}.

Potentially semistable representations arise naturally in geometry. Deciding
which isomorphism classes of filtered modules occur from certain geometric
objects, e.g. Hilbert modular forms is an interesting open problem and we hope
that this paper will contribute in this direction. Special cases of the
problem have been treated by Fontaine and Mazur \cite{FM95} when both $E\ $and
$K\ $equal$\ \mathcal{%
\mathbb{Q}
}_{p}$ and $p\geq5,$ Breuil and M\'{e}zard \cite{BM02} who initiated the
subject with arbitrary coefficients, Savitt \cite{SAV05} in cases where the
representation becomes crystalline over tamely ramified extensions of
$\mathcal{%
\mathbb{Q}
}_{p},$ and most recently by Ghate and M\'{e}zard \cite{GM09} who treated
almost all cases where $K=\mathcal{%
\mathbb{Q}
}_{p},$ assuming that $E$ is large enough and $p\neq2.$ In this paper we
assume that the coefficient field $E$ is large enough, and make no further
assumptions. The paper is organized as follows: in the rest of this
introductory section we recall standard facts from $p$-adic Hodge theory and
there is nothing original. In Section \ref{product ring} we set up our main
notations and prove a canonical form lemma for Frobenius and the monodromy
operator (\S \ref{canonical forms}). We then proceed to determine the Galois
descent data (\S \ref{gal descent}). In Section \ref{stable filtrations} we
construct the Galois-stable filtrations and in Section
\ref{the fixed submodules} we compute Hodge and Newton invariants. In Section
\ref{list of the weakly admissible} we provide the complete list of rank two
weakly admissible filtered $\left(  \varphi,N,L/K,E\right)  $-modules,
determine which are irreducible, non-split reducible or split-reducible, and
describe their precise submodule structure. In Section
\ref{isomorphism classes} we list the isomorphism classes of rank two filters
modules (\S \ref{last list}), and in Section \ref{crystalline example section}
we apply the results of previous sections to explore new phenomena occurring
in the $K\neq%
\mathbb{Q}
_{p}$ case, focusing on crystalline representations.

\tableofcontents

\subsection{Fontaine's rings\label{fontaine's rings}}

Let $\mathbb{C}_{p}$ be the completion of $\bar{%
\mathbb{Q}%
}_{p}$ for the $p$-adic topology. The field $%
\mathbb{C}
_{p}$ is algebraically closed and complete. Let $\widetilde{E}=\underset
{x\mapsto x^{p}}{{\varprojlim}}\mathbb{C}_{p}=\{(x^{(0)},x^{(1)}%
,...,x^{(n)},...)\ $such\ that$\ (x^{(n+1)})^{p}=x^{(n)}\ for\ $%
all$\ n\geq0\}$ and let $\widetilde{E}^{+}$ be the set of $x=(x^{(0)}%
,x^{(1)},...,$\noindent$x^{(n)},...)\in\widetilde{E}$ with $v_{E}%
(x):=v_{p}(x^{(0)})\geq0.$ Then $\widetilde{E}$ with addition and
multiplication defined by
\[
(x+y)^{(n)}=\underset{m\rightarrow\infty}{{\lim}}(x^{(n+m)}+y^{(n+m)})^{p^{m}%
}\ \text{and }(xy)^{(n)}=x^{(n)}y^{(n)}%
\]
for\ all\ $n\geq0$ is an algebraically closed field of characteristic $p\ $and
$v_{E}$ is a valuation on $\widetilde{E}$ for which $\widetilde{E}$ is
complete with valuation ring $\widetilde{E}^{+}.$ Let $\widetilde{\mathbb{A}%
}^{+}$ be the ring of Witt vectors with $\widetilde{E}^{+}$-coefficients and
let $\widetilde{\mathbb{B}}^{+}=\widetilde{\mathbb{A}}^{+}[\frac{1}{p}%
]=\{\sum\limits_{k\gg-\infty}p^{k}[x_{k}],\ x_{k}\in\widetilde{E}^{+}\},$
where $[x]\in\widetilde{\mathbb{A}}^{+}$ is the Teichm\"{u}ller lift of
$x\in\widetilde{E}^{+}.$ The ring $\widetilde{\mathbb{B}}^{+}$ is endowed with
a ring epimorphism $\theta:$ $\widetilde{\mathbb{B}}^{+}\rightarrow
\mathbb{C}_{p}$ given by $\theta(\sum\limits_{k\gg-\infty}p^{k}[x_{k}%
])=\sum\limits_{k\gg-\infty}p^{k}x_{k}^{(0)}.$ By functorial properties of
Witt vectors the absolute Frobenius $\varphi:$ $\widetilde{E}^{+}%
\rightarrow\widetilde{E}^{+}$ lifts to a ring epimorphism $\varphi:$
$\widetilde{\mathbb{B}}^{+}\rightarrow\widetilde{\mathbb{B}}^{+}$given by
$\varphi(\sum\limits_{k\gg-\infty}p^{k}[x_{k}])=\sum\limits_{k\gg-\infty}%
p^{k}[x_{k}^{p}].$ Let $\varepsilon=(\varepsilon^{(i)})_{i\geq0}\in
\widetilde{E}$ where $\varepsilon^{(0)}=1$ and $\varepsilon^{(i)}$ is a
primitive $p^{i}$-th root of $1\ $such that $\left(  \varepsilon
^{(i+1)}\right)  ^{p}=\varepsilon^{(i)}\ $for all $i.$ If $\pi=[\varepsilon
]-1$ and $\pi_{1}=[\varepsilon^{\frac{1}{p}}]-1$, we write $\omega=\frac{\pi
}{\pi_{1}}.$ The kernel of the epimorphism $\theta:$ $\widetilde{\mathbb{B}%
}^{+}\rightarrow\mathbb{C}_{p}$ is the principal ideal generated by $\omega.$
The ring $\mathbb{B}_{dR}^{+}$ is defined to be the separated $\ker\theta
$-adic completion of $\widetilde{\mathbb{B}}^{+},\ $i.e. $\mathbb{B}_{dR}%
^{+}=\underset{n}{{\varprojlim}}\ \widetilde{\mathbb{B}}^{+}/(\ker\theta
)^{n}.$ The series $\log([\varepsilon])=-\sum\limits_{n=1}^{\infty}%
\frac{(1-[\varepsilon])^{n}}{n}$ converges to some element $t\in
\mathbb{B}_{dR}^{+}$ with the property that $gt=\chi(g)t$ for all $g\in G_{%
\mathbb{Q}
_{p}},$ where $\chi:G_{%
\mathbb{Q}
_{p}}$ $\rightarrow%
\mathbb{Z}
_{p}^{\times}$ is the cyclotomic character. We define $\mathbb{B}%
_{dR}=\mathbb{B}_{dR}^{+}[\frac{1}{t}].$ The ring $\mathbb{B}_{dR}$ is a field
equipped with a decreasing, exhaustive and separated filtration given by
Fil$^{j}\mathbb{B}_{dR}=t^{j}\mathbb{B}_{dR}^{+}$ for all integers $j.$ It
contains a subring $\mathbb{B}_{cris}$ endowed with the induced Galois action
and a Frobenius endomorphism $\varphi$ which extends $\varphi:\widetilde
{\mathbb{B}}^{+}\rightarrow\widetilde{\mathbb{B}}^{+},~$such that
$\varphi(t)=pt.$ It has the property that $\mathbb{B}_{cris}^{G_{K}}=K_{0}$
for any finite extension $K$ of $%
\mathbb{Q}
_{p},$ where $K_{0}$ is the maximum unramified extension of $%
\mathbb{Q}
_{p}$ inside $K.\ $\noindent Between $\mathbb{B}_{cris}$ and $\mathbb{B}_{dR}
$ sits (non canonically) a ring $\mathbb{B}_{st}=\mathbb{B}_{cris}[X],$ where
$X$ is a polynomial variable over $\mathbb{B}_{cris}.$ The ring $\mathbb{B}%
_{st}$ is equipped with a Frobenius which extends the Frobenius on
$\mathbb{B}_{cris}$ and is such that $\varphi(X)=pX.$ There is also a $\bar{%
\mathbb{Q}%
}_{p}$-linear monodromy operator $N=-\frac{d}{dX}$ which satisfies the
equation $N\varphi=p\varphi N.$ Let $\tilde{p}\in\tilde{E}^{+}$ be any element
with $\tilde{p}^{(0)}=p$ and let
\[
\log[\tilde{p}]=\log_{p}(p)-\sum\limits_{n=1}^{\infty}\frac{(1-[\tilde
{p}]/p)^{n-1}}{n}.
\]
There exist Galois equivariant, $\mathbb{B}_{cris}$-linear embeddings of
$\mathbb{B}_{st}$ in $\mathbb{B}_{dR}$ which map $X$ to $\log[\tilde{p}].$
They require a choice of $\log_{p}(p)$ and we always assume that $\log
_{p}(p)=0.$ The ring $\mathbb{B}_{st}$ is equipped with a Galois action which
extends the Galois action on $\mathbb{B}_{cris}.$ It has the properties
that$\ \mathbb{B}_{st}^{G_{K}}=K_{0}$ for any finite extension $K\ $of $%
\mathbb{Q}
_{p}\ $and the map $K\otimes_{K_{0}}\mathbb{B}_{st}^{G_{K}}\rightarrow
\mathbb{B}_{dR}$ is injective.

\subsection{Potentially semistable representations}

Let $K\ $be a finite extension of $%
\mathbb{Q}
_{p}\ $and $V$ a $%
\mathbb{Q}
_{p}$-linear representation of $G_{K}.$ The fact that $\mathbb{B}_{dR}^{G_{K}%
}=K$ is part of a technical condition called regularity which implies that the
$K$-vector space $D_{dR}(V)=(\mathbb{B}_{dR}\otimes_{%
\mathbb{Q}
_{p}}V)^{G_{K}}$ has dimension at most dim$_{%
\mathbb{Q}
_{p}}(V).$ The representation $V$ is called de Rham if equality holds. All
representations coming from geometry are de Rham. The $K$-space $D_{dR}(V)$ is
equipped with a natural decreasing, exhaustive and separated filtration given
by Fil$^{j}D_{dR}(V)=(t^{j}\mathbb{B}_{dR}^{+}\otimes_{%
\mathbb{Q}
_{p}}V)^{G_{K}}$ for any integer $j.$ An integer $j$ is called a Hodge-Tate
weight of a de Rham representation $V$ if Fil$^{-j}D_{dR}(V)\neq$
Fil$^{-j+1}D_{dR}(V),$ and is counted with multiplicity dim$_{K}\left(
\text{Fil}^{-j}D_{dR}(V)/\text{Fil}^{-j+1}D_{dR}(V)\right)  .$ There are $d=$
dim$_{%
\mathbb{Q}
_{p}}(V)$ Hodge-Tate weights for $V,$ counting multiplicities. A chosen
inclusion of $\mathbb{B}_{st}$ in $\mathbb{B}_{dR}$ defines (non canonically)
a filtration on $K\otimes_{K_{0}}D_{st}(V)=K\otimes_{K_{0}}(\mathbb{B}%
_{st}\otimes_{%
\mathbb{Q}
_{p}}V)^{G_{K}}$ which is preserved by the Galois action. By the construction
of the ring $\mathbb{B}_{st}\ $the\ inequality\ dim$_{K_{0}}D_{st}(V)\leq$
dim$_{%
\mathbb{Q}
_{p}}(V)$ always holds, and $V$ is called semistable when equality holds. It
is called potentially semistable if it becomes semistable when restricted to
$G_{L},$ for some finite extension $L$ of $K.\ $Crystalline representations
are semistable and semistable representations are de Rham, with the converse
inclusions being false. Potentially semistable representations are de Rham.
The converse is a difficult theorem of Berger (\cite{BE04b}), known as the $p
$-adic monodromy theorem.

Let $L$ be a finite Galois extension of $K$ and $E$ any finite extension of
$L.$ We write $D_{st}^{L}(V)$ instead of $D_{st}(V\mid_{G_{L}}).$ Assume that
$V$ is equipped with an $E$-linear structure which commutes with the $G_{K}%
$-action. The $L_{0}$-space$\ D_{st}^{L}(V)$ is additionally equipped with an
$L_{0}\otimes_{%
\mathbb{Q}
_{p}}E$-module structure, and $V$ is $L$-semistable if and only if $D_{st}%
^{L}(V)$ is free of rank dim$_{E}V.$ For the rest of the section we assume
that $V$ is $L$-semistable. The Frobenius endomorphism of $\mathbb{B}_{st}$
induces an automorphism $\varphi$ on $D_{st}^{L}(V)$ which is semilinear with
respect to the automorphism $\tau\otimes1_{E}$ of $L_{0}\otimes_{%
\mathbb{Q}
_{p}}E,$ where $\tau$ is the absolute Frobenius of $L_{0}.$ The monodromy
operator $N$ of $\mathbb{B}_{st}$ induces an $L_{0}\otimes_{%
\mathbb{Q}
_{p}}E$-linear nilpotent endomorphism $N$ on $D_{st}^{L}(V)$ such that
$N\varphi=p\varphi N.$ We equip $L\otimes_{L_{0}}D_{st}(V)$ with the
filtration induced by the injection $L\otimes_{L_{0}}D_{st}^{L}(V)\rightarrow
D_{dR}(V).$ It has the properties that Fil$^{j}\left(  L\otimes_{L_{0}}%
D_{st}^{L}(V)\right)  =0$ for $j\gg0$ and Fil$^{j}\left(  L\otimes_{L_{0}%
}D_{st}^{L}(V)\right)  =L\otimes_{L_{0}}D_{st}^{L}(V)$ for $j\ll0.$ The module
$D_{st}^{L}(V)$ is also equipped with an $L_{0}$-semilinear, $E$-linear action
of $G=$ Gal$(L/K)$ which commutes with $\varphi$ and $N$ and preserves the
filtration. The discussion above motivates the following.

\begin{definition}
\label{the filtered definition}A rank $n$ filtered $\left(  \varphi
,N,L/K,E\right)  $-module is a free module $D$ of rank $n$ over $L_{0}%
\otimes_{%
\mathbb{Q}
_{p}}E$ equipped with
\end{definition}

\begin{itemize}
\item an $L_{0}$-semilinear, $E$-linear automorphism $\varphi;$

\item an $L_{0}\otimes_{%
\mathbb{Q}
_{p}}E$-linear nilpotent endomorphism $N$ such that $N\varphi=p\varphi N;$

\item a decreasing filtration on $D_{L}=L\otimes_{L_{0}}D$ such that
Fil$^{j}D_{L}=0$ for $j\gg0$ and Fil$^{j}D_{L}=D_{L}$ for $j\ll0,$ and

\item an $L_{0}$-semilinear, $E$-linear action of $G=$ Gal$(L/K)$ which
commutes with $\varphi$ and $N$ and preserves the filtration of $D_{L}.$
\end{itemize}

\noindent A morphism of filtered $\left(  \varphi,N,L/K,E\right)  $-modules is
an $L_{0}\otimes_{%
\mathbb{Q}
_{p}}E$-linear map $h$ which commutes with $\varphi,$ $N$ and the
Gal$(L/K)$-action, and is such that the $L\otimes_{%
\mathbb{Q}
_{p}}E$-linear map $h_{L}=1_{L\otimes_{%
\mathbb{Q}
_{p}}E}\otimes h$ preserves the filtrations.\ A filtered $\left(
\varphi,N,L/K,E\right)  $-module is called weakly admissible if it is weakly
admissible as a filtered $(\varphi,N,E)$-module in the sense of
\cite[Cor.\ 3.1.2.1]{BM02}. \noindent The Galois action plays no role in weak
admissibility. \noindent We have the following fundamental theorem essentially
due to Colmez and Fontaine (cf. \cite[Cor. 3.1.1.3]{BM02}).

\begin{theorem}
Let $k\geq1$ be any integer. The category of $L$-semistable $E$-representa-

\noindent tions of $G_{K}$ with Hodge-Tate weights in the range
$\{0,1,...,k-1\}$ is equivalent to the category of weakly admissible filtered
$\left(  \varphi,N,L/K,E\right)  $-modules $D\ $such that \textnormal{Fil}%
$^{0}(D_{L})=D_{L}$ and \textnormal{Fil}$^{k}(D_{L})=0.$
\end{theorem}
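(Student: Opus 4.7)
The plan is to reduce the statement to the Colmez--Fontaine classification of weakly admissible filtered $(\varphi,N,E)$-modules over $L_{0}$, as packaged in \cite[Cor.\ 3.1.1.3]{BM02}, and then to layer on the $\mathrm{Gal}(L/K)$-descent. In the forward direction, starting from an $L$-semistable $E$-representation $V$ of $G_{K}$ with Hodge-Tate weights in $\{0,\dots,k-1\}$, I would set $D=D_{st}^{L}(V)=(\mathbb{B}_{st}\otimes_{\mathbb{Q}_{p}}V)^{G_{L}}$ and verify each required structure. The $L$-semistability hypothesis yields freeness of rank $\dim_{E}V$ over $L_{0}\otimes_{\mathbb{Q}_{p}}E$; the Frobenius and monodromy descend from $\mathbb{B}_{st}$ with the compatibility $N\varphi=p\varphi N$; the filtration on $L\otimes_{L_{0}}D$ is pulled back from $D_{dR}(V)$ via the injection $L\otimes_{L_{0}}D_{st}^{L}(V)\hookrightarrow D_{dR}(V)$ already described in the introduction, and the conditions $\mathrm{Fil}^{0}=D_{L}$ and $\mathrm{Fil}^{k}=0$ translate exactly the weight hypothesis; finally, the $\mathrm{Gal}(L/K)$-action arises from the residual $G_{K}/G_{L}$-action on $G_{L}$-invariants, automatically $L_{0}$-semilinear, $E$-linear, commuting with $\varphi,N$, and preserving the filtration. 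Weak admissibility passes to the coefficient setting because, as noted immediately before the theorem, it is insensitive to the $E$-action.

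For the quasi-inverse, starting from $D$, I would forget the descent datum and view $D$ as a weakly admissible filtered $(\varphi,N,E)$-module over $L_{0}$ with $\mathrm{Fil}^{0}(D_{L})=D_{L}$ and $\mathrm{Fil}^{k}(D_{L})=0$. The Colmez--Fontaine equivalence with coefficients produces a semistable $E$-representation $V_{0}$ of $G_{L}$ with Hodge-Tate weights in the prescribed range. The non-trivial step is then to upgrade $V_{0}$ to a $G_{K}$-representation: by functoriality of the quasi-inverse, the semilinear $\mathrm{Gal}(L/K)$-action on $D$ transports to a compatible semilinear $\mathrm{Gal}(L/K)$-action on $V_{0}$ commuting with $G_{L}$, and these assemble into a $G_{K}$-action on a descended $E$-representation $V$ via standard Galois descent along the field extension cut out by $G_{L}$ inside $G_{K}$. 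Quasi-inverseness then reduces to that of Colmez--Fontaine for $G_{L}$-representations, combined with the bijection between $G_{K}$-extensions of a given $G_{L}$-action and descent data of the stated type.

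The main obstacle I anticipate is the descent step itself: one must check that the transported $\mathrm{Gal}(L/K)$-action on $V_{0}$ is genuinely effective, not merely a twisted form or a 2-cocycle. This is precisely why the axioms of a filtered $(\varphi,N,L/K,E)$-module demand that the descent datum be $L_{0}$-semilinear, $E$-linear, commute with $\varphi$ and $N$, and preserve the filtration on $D_{L}$; these are exactly the compatibilities that make the descent effective and the resulting $V$ a genuine $E$-linear $G_{K}$-representation. Once effectivity is in place, the equivalence on morphisms follows by unwinding the definitions, since an $L_{0}\otimes_{\mathbb{Q}_{p}}E$-linear map commuting with $\varphi,N$ and $\mathrm{Gal}(L/K)$ corresponds, under the $G_{L}$-level equivalence, exactly to a $G_{K}$-equivariant $E$-linear map between the associated representations.
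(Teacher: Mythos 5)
The paper does not actually prove this theorem: it is stated as ``essentially due to Colmez and Fontaine'' with a direct citation to \cite[Cor.\ 3.1.1.3]{BM02}, and the body of the paper takes it as a known input. There is therefore no internal proof to compare your proposal against.

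That said, your outline reproduces the standard argument and is essentially correct. The forward direction (set $D=D_{st}^{L}(V)$, verify the four bullet points of Definition \ref{the filtered definition}, translate the weight hypothesis into $\mathrm{Fil}^{0}(D_{L})=D_{L}$ and $\mathrm{Fil}^{k}(D_{L})=0$, and note that weak admissibility ignores the coefficient action) is exactly what the introduction of the paper already spells out. For the quasi-inverse, your identification of the real work as the descent/effectivity step is right, and the observation that the compatibility axioms on the $\mathrm{Gal}(L/K)$-datum are precisely what make it effective is the correct intuition.

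One point deserves a sharper formulation: what you call ``standard Galois descent along the field extension cut out by $G_{L}$'' is not descent of an $E$-vector space to a smaller field. The underlying $E$-space of $V$ and of $V_{0}$ is the same; what one must produce is an extension of the $G_{L}$-action on $V_{0}$ to a $G_{K}$-action, using the $\mathrm{Gal}(L/K)=G_{K}/G_{L}$-action on $D$ together with the $G_{K}$-equivariant structure on $\mathbb{B}_{st}$. Concretely, the Colmez--Fontaine quasi-inverse $V_{st}(D)$ inherits a $G_{K}$-action directly from the $G_{K}$-action on $\mathbb{B}_{st}$ and the $\mathrm{Gal}(L/K)$-action on $D$; the content is that this restricts on $G_{L}$ to the usual action and that the whole package is functorial and inverse to $D_{st}^{L}$. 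Phrasing it as an extension problem for a short exact sequence of groups acting on the same vector space, rather than as descent of a vector space, makes it clearer why no 2-cocycle obstruction arises: the $\mathrm{Gal}(L/K)$-action on $D$ is a genuine group action, not merely a projective one, and the axioms force compatibility with the period ring structure.
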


\section{Rank two filtered $\left(  \varphi,N,L/K,E\right)  $-modules
\label{product ring}}

\noindent Throughout the paper $p$ will be a fixed prime number and $L/K$ any
finite Galois extension, with $K$ any finite extension of $%
\mathbb{Q}
_{p}.$ The coefficient field $E$ will be any finite, large enough extension of
$L.$ We denote by $m$ the degree of $L$ over $\mathcal{%
\mathbb{Q}
}_{p},$ by $f=[L_{0}:\mathcal{%
\mathbb{Q}
}_{p}]\ $the absolute inertia degree of $L,$ and by $e=[L:L_{0}]$ the absolute
ramification index of $L.\ $As in the introduction we denote by $L_{0}\ $the
maximal unramified extension of $%
\mathbb{Q}
_{p}$ inside $L.$ Let$\ \tau$ be the absolute Frobenius of $L_{0}.$ We fix an
embedding $\iota_{L_{0}}:L_{0}\hookrightarrow E$ and we let $\tau_{j}%
=\iota_{L_{0}}\circ\tau^{j}$ for all $j=0,1,...,f-1.$ We fix once and for all
the $f$-tuple of embeddings $\mathcal{S}_{L_{0}}:=(\tau_{0},\tau_{1}%
,...,\tau_{f-1}).$ The map%
\[
\xi_{L_{0}}:L_{0}\otimes_{%
\mathbb{Q}
_{p}}E\rightarrow\prod\limits_{\ \ \mathcal{S}_{L_{0}}}E\ :\ \xi_{L_{0}%
}(x\otimes y)=(\tau_{i}(x)y)_{\tau_{i}}%
\]
is a ring isomorphism (cf. \cite[Lemma 2.2]{SAV05}). Let $E^{\mid
\mathcal{S}_{L_{0}}\mid}:=\prod\limits_{\ \ \mathcal{S}_{L_{0}}}E\ $and
$\left(  E^{\times}\right)  ^{\mid\mathcal{S}_{L_{0}}\mid}:=\prod
\limits_{\ \ \mathcal{S}_{L_{0}}}E^{\times}.$ The ring automorphism
$\tau\otimes1_{E}:L_{0}\otimes_{%
\mathbb{Q}
_{p}}E\rightarrow L_{0}\otimes_{%
\mathbb{Q}
_{p}}E$ transforms via $\xi_{L_{0}}$ to the ring automorphism $\varphi
:E^{\mid\mathcal{S}_{L_{0}}\mid}\rightarrow E^{\mid\mathcal{S}_{L_{0}}\mid}$
with $\varphi(x_{0},x_{1},...,x_{f-1})$

\noindent$=(x_{1},...,x_{f-1},x_{0}).$ A filtered $\left(  \varphi
,N,L/K,E\right)  $-module may therefore be viewed as a module over
$E^{\mid\mathcal{S}_{L_{0}}\mid}.$ The automorphism $\varphi:D\rightarrow D$
is semilinear with respect to the automorphism $\varphi\ $of $E^{\mid
\mathcal{S}_{L_{0}}\mid}$ defined above, and the monodromy $N$ is
$E^{\mid\mathcal{S}_{L_{0}}\mid}$-linear. The Galois action of $G=$ Gal$(L/K)$
on $E^{\mid\mathcal{S}_{L_{0}}\mid}$ will be described in Section
\ref{action F_0}. We let $e_{\tau_{j}}:=(0,...,1_{\tau_{j}},...,0)\in
E^{\mid\mathcal{S}_{L_{0}}\mid}\ $for any $j\in\{0,1,...,f-1\},$ and set up
some more notation which will remain fixed throughout.

\begin{notation}
\noindent\label{first notation} For each $J\subset\{0,1,...,f-1\}$ we write
$f_{J}=\sum\limits_{i\in J}e_{_{\tau_{i}}}.$ \noindent If $\vec{x}\in
E^{\mid\mathcal{S}_{L_{0}}\mid},$ we define $Nm_{\varphi}(\vec{x}%
):=\prod\limits_{i=0}^{f-1}\varphi^{i}(\vec{x})\ $and$\ $$Tr_{\varphi}(\vec
{x}):=\sum\limits_{i=0}^{f-1}\varphi^{i}(\vec{x}).$ For any $\vec{x}\in
E^{\mid\mathcal{S}_{L_{0}}\mid}$ we denote by $x_{i}\ $the $i$-th component of
$\vec{x},$ and for any matrix $M\in M_{2}(E^{\mid\mathcal{S}_{L_{0}}\mid})$ we
write $Nm_{\varphi}(M)=M\varphi(M)\cdot\cdot\cdot\varphi^{f-1}(M),$ with
$\varphi$ acting on each entry of $M.$
\end{notation}

\subsection{Canonical forms for Frobenius and the monodromy
operator\label{canonical forms}}

We start by putting the matrix of Frobenius of a rank two $\varphi$-module in
a convenient form. The matrix of any (semi)linear operator $T$ on $D$ with
respect to an ordered basis $\underline{e}$ will be denoted by
$[T]_{\underline{e}}$ throughout. The following elementary lemma will be used frequently.

\begin{lemma}
\label{norm lemma}

\begin{enumerate}
\item[(1)] The operator $Nm_{\varphi}:\left(  E^{\times}\right)
^{\mid\mathcal{S}_{L_{0}}\mid}\rightarrow\left(  E^{\times}\right)
^{\mid\mathcal{S}_{L_{0}}\mid}$ is multiplicative;

\item[(2)] Let $\vec{\alpha},\vec{\beta}\in\left(  E^{\times}\right)
^{\mid\mathcal{S}_{L_{0}}\mid}.$ The equation $\vec{\alpha}\cdot\vec{\gamma
}=\vec{\beta}\cdot\varphi(\vec{\gamma})$ has nonzero solutions $\vec{\gamma
}\in E^{\mid\mathcal{S}_{L_{0}}\mid}\ $if and only if $Nm_{\varphi}%
(\vec{\alpha})=$$Nm_{\varphi}(\vec{\beta}).$ In this case, all the solutions
are \noindent$\vec{\gamma}=\gamma\left(  1,\frac{\alpha_{0}}{\beta_{0}}%
,\frac{\alpha_{0}\alpha_{1}}{\beta_{0}\beta_{1}},...,\frac{\alpha_{0}%
\alpha_{1}\cdots\alpha_{f-2}}{\beta_{0}\beta_{1}\cdots\beta_{f-2}}\right)  $
for any $\gamma\in E.$
\end{enumerate}
\end{lemma}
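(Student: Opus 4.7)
The plan is to unpack everything in the product-ring coordinates $E^{\mid\mathcal{S}_{L_{0}}\mid}=\prod_{\tau_{j}}E$ set up above Notation \ref{first notation}, where $\varphi$ acts as the cyclic shift $(x_{0},x_{1},\ldots,x_{f-1})\mapsto(x_{1},\ldots,x_{f-1},x_{0})$. Part (1) is then a formality: $\varphi$ is a ring endomorphism, so each iterate $\varphi^{i}$ is multiplicative, and hence so is the product $Nm_{\varphi}=\prod_{i=0}^{f-1}\varphi^{i}$.

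For part (2), I would write $\vec{\gamma}=(\gamma_{0},\ldots,\gamma_{f-1})$ and compare the two sides of $\vec{\alpha}\cdot\vec{\gamma}=\vec{\beta}\cdot\varphi(\vec{\gamma})$ componentwise. In coordinates the equation reads $\alpha_{i}\gamma_{i}=\beta_{i}\gamma_{i+1}$ with indices modulo $f$. Since every $\beta_{i}$ lies in $E^{\times}$, this is a first order linear recurrence $\gamma_{i+1}=(\alpha_{i}/\beta_{i})\gamma_{i}$ for $i=0,\ldots,f-2$, together with a closing condition supplied by the index $i=f-1$. Iterating the recurrence produces $\gamma_{i}=\gamma_{0}\prod_{j=0}^{i-1}(\alpha_{j}/\beta_{j})$ for $1\leq i\leq f-1$, which is precisely the formula displayed in the statement once one sets $\gamma:=\gamma_{0}\in E$.

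Substituting back into the closing equation $\alpha_{f-1}\gamma_{f-1}=\beta_{f-1}\gamma_{0}$ collapses it to $\gamma_{0}\prod_{j=0}^{f-1}\alpha_{j}=\gamma_{0}\prod_{j=0}^{f-1}\beta_{j}$. Either $\gamma_{0}=0$ (forcing $\vec{\gamma}=0$) or the scalar identity $\prod_{j}\alpha_{j}=\prod_{j}\beta_{j}$ must hold; conversely, whenever this identity holds, every choice of $\gamma_{0}\in E$ yields a valid solution through the explicit formula. To finish, I would recognise the scalar identity as the equality $Nm_{\varphi}(\vec{\alpha})=Nm_{\varphi}(\vec{\beta})$: since the $i$-th component of $\varphi^{k}(\vec{\alpha})$ is $\alpha_{(i+k)\bmod f}$, the $i$-th component of $Nm_{\varphi}(\vec{\alpha})$ equals $\prod_{k=0}^{f-1}\alpha_{(i+k)\bmod f}=\prod_{j=0}^{f-1}\alpha_{j}$, independent of $i$, and the analogous computation holds for $\vec{\beta}$. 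Thus the componentwise equality of the two norms is equivalent to the single scalar condition that arose from the closing step, finishing the equivalence and yielding the exact parametrisation of solutions.

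I do not anticipate any genuine obstacle; the only thing that requires attention is keeping the cyclic indexing straight, and the computation that $Nm_{\varphi}$ takes values in the diagonal copy of $E$ inside $E^{\mid\mathcal{S}_{L_{0}}\mid}$ is what makes the recurrence's closing condition line up cleanly with the norm condition.
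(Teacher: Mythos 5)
Your proof is correct and is exactly the elaboration the paper invites: the paper's own proof consists of the single word ``Straightforward,'' and the natural argument is the one you give — unpack the product-ring coordinates, read the equation as the cyclic recurrence $\alpha_{i}\gamma_{i}=\beta_{i}\gamma_{i+1}$, iterate to get the explicit vector, and identify the closing condition with the norm equality by noting that every component of $Nm_{\varphi}(\vec{\alpha})$ equals $\prod_{j}\alpha_{j}$. Nothing is missing.
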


\begin{proof}
Straightforward.
\end{proof}

\noindent Let $D\ $be a rank two $\varphi$-module$\ $over $E^{\mid
\mathcal{S}_{L_{0}}\mid}$ and let $\underline{\eta}\ $and $\underline{e}$ be
ordered bases. Then $(\eta_{1},\eta_{2})=(e_{1},e_{2})M$ for some matrix $M\in
GL_{2}\left(  E^{\mid\mathcal{S}_{L_{0}}\mid}\right)  ,$ and we write
$M\ =[1]_{\underline{\eta}}^{\underline{e}}.$ It follows from Section
\ref{product ring} that $[\varphi]_{\underline{e}}=M[\varphi]_{\underline
{\eta}}\varphi(M)^{-1}.$ The main observation of this section is the following proposition.

\begin{proposition}
\label{new shape of phi}Let $D\ $be a rank two $\varphi$-module$\ $over
$E^{\mid\mathcal{S}_{L_{0}}\mid}.$ After enlarging $E$ if necessary, there
exists an ordered basis $\underline{\eta}$ of $D$ with respect to which the
matrix of Frobenius takes one of the following forms:

\begin{enumerate}
\item[(1)] $[\varphi]_{\underline{\eta}}=$ \textnormal{diag}$(\alpha\cdot\vec
{1},\delta\cdot\vec{1})$ for some $\alpha,\delta\in E^{\times}$ with
$\alpha^{f}\neq\delta^{f},$ or

\item[(2)] $[\varphi]_{\underline{\eta}}=$ \textnormal{diag}$(\alpha\cdot\vec
{1},\alpha\cdot\vec{1})$ for some $\alpha\in E^{\times},$ or

\item[(3)] $[\varphi]_{\underline{\eta}}=\left(
\begin{array}
[c]{cc}%
\alpha\cdot\vec{1} & \vec{0}\\
\vec{1} & \alpha\cdot\vec{1}%
\end{array}
\right)  $ for some $\alpha\in E^{\times}.$
\end{enumerate}
\end{proposition}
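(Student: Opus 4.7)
The plan is to pass to the $f$-fold iterate $\varphi^{f}\colon D\to D$, which is $E^{\mid\mathcal{S}_{L_{0}}\mid}$-linear because $\varphi$ has order $f$ on the coefficient ring; it therefore preserves each component $D^{(i)}:=e_{\tau_{i}}D$ (a rank-two $E$-vector space) and restricts to an $E$-linear endomorphism $\Phi_{i}$ of $D^{(i)}$. I would first note that $\varphi$ induces $E$-linear isomorphisms $D^{(i)}\to D^{(i-1)}$ intertwining the $\Phi_{i}$, so they are all pairwise conjugate and share their Jordan type. The strategy is then to read off this Jordan type from $\Phi_{0}$, choose an adapted basis $(w_{1},w_{2})$ of $D^{(0)}$, and propagate it around the $\varphi$-cycle to produce a basis $\underline{\eta}=(\eta_{1},\eta_{2})$ of $D$ over $E^{\mid\mathcal{S}_{L_{0}}\mid}$ in which $\varphi$ takes one of the three stated shapes.

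After enlarging $E$ so that both eigenvalues of $\Phi_{0}$ and chosen $f$-th roots of them lie in $E$, the analysis splits by the Jordan type of $\Phi_{0}$. If $\Phi_{0}$ has distinct eigenvalues $\mu_{1}\neq\mu_{2}$, I would pick eigenvectors $w_{1},w_{2}\in D^{(0)}$ and roots $\alpha,\delta\in E^{\times}$ with $\alpha^{f}=\mu_{1}$, $\delta^{f}=\mu_{2}$, and define $\eta_{k}\in D$ by $\eta_{k}^{(0)}:=w_{k}$ and $\eta_{k}^{(i)}:=\alpha_{k}^{-(f-i)}\varphi^{f-i}(w_{k})$ for $1\leq i\leq f-1$ (with $\alpha_{1}:=\alpha$, $\alpha_{2}:=\delta$). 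A direct check gives $\varphi(\eta_{k})=\alpha_{k}\vec{1}\cdot\eta_{k}$, so $[\varphi]_{\underline{\eta}}$ takes the form (1); $E$-linear independence of $(\eta_{1}^{(i)},\eta_{2}^{(i)})$ at each component follows from independence at $i=0$ by applying the isomorphism $\varphi^{f-i}$. The scalar case $\Phi_{0}=\mu\cdot I$ is handled identically with $\mu_{1}=\mu_{2}=\mu$ and any basis of $D^{(0)}$, yielding shape (2).

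The hard case will be (3), where $\Phi_{0}$ is a single non-scalar Jordan block with eigenvalue $\mu=\alpha^{f}$. A short induction starting from the desired shape (3) shows $\varphi^{f}(\eta_{1})=\alpha^{f}\eta_{1}+f\alpha^{f-1}\eta_{2}$ and $\varphi^{f}(\eta_{2})=\alpha^{f}\eta_{2}$, so the prospective basis $(w_{1},w_{2})=(\eta_{1}^{(0)},\eta_{2}^{(0)})$ of $D^{(0)}$ must satisfy $\Phi_{0}w_{1}=\alpha^{f}w_{1}+f\alpha^{f-1}w_{2}$ and $\Phi_{0}w_{2}=\alpha^{f}w_{2}$. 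Since $E$ has characteristic zero, $f\alpha^{f-1}\neq 0$ and such a basis exists: pick any $w_{1}\in D^{(0)}$ outside the $\mu$-eigenspace of $\Phi_{0}$ and set $w_{2}:=(f\alpha^{f-1})^{-1}(\Phi_{0}-\mu)w_{1}$. Then I would extend to a basis of $D$ by the cyclic recursion $\eta_{2}^{(i-1)}:=\alpha^{-1}\varphi(\eta_{2}^{(i)})$ and $\eta_{1}^{(i-1)}:=\alpha^{-1}\bigl(\varphi(\eta_{1}^{(i)})-\eta_{2}^{(i-1)}\bigr)$; consistency after going once around the cyclic indices is exactly the matrix identity for $\Phi_{0}$ just verified.

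The main obstacle is the non-semisimple case (3): one must reverse-engineer the matrix of $\Phi_{0}$ forced by the desired shape of $\varphi$ (the off-diagonal entry $f\alpha^{f-1}$ is not visible a priori), verify that a basis realizing it exists—where characteristic zero of $E$ enters crucially through $f\in E^{\times}$—and finally check that the cyclic recursion used to propagate $(w_{1},w_{2})$ to all components of $D$ closes up consistently after $f$ steps. The dichotomy distinct eigenvalues / scalar / single Jordan block exhausts all possible Jordan types of $\Phi_{0}$, and each type corresponds to exactly one of the three shapes in the proposition.
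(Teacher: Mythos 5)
Your argument is correct and establishes the proposition by a route genuinely different from the paper's. You work componentwise: the idempotent decomposition $D=\bigoplus_{i}D^{(i)}$ with $D^{(i)}=e_{\tau_i}D$, the observation that $\varphi$ sends $D^{(i)}$ isomorphically to $D^{(i-1)}$ and intertwines the restrictions $\Phi_i=\varphi^f|_{D^{(i)}}$, and then the normal form emerges directly from the Jordan type of the single $E$-linear map $\Phi_0$ on a two-dimensional $E$-vector space. In each Jordan type you choose an adapted basis of $D^{(0)}$ and propagate it around the $\varphi$-cycle; the closure condition after $f$ steps is exactly $\Phi_0 w_k$ matching the $k$-th column of $A^f$ where $A=\begin{pmatrix}\alpha&0\\1&\alpha\end{pmatrix}$, which is the identity $A^f=\begin{pmatrix}\alpha^f&0\\f\alpha^{f-1}&\alpha^f\end{pmatrix}$ you verify, and your use of $(\Phi_0-\mu)^2=0$ plus $f\alpha^{f-1}\neq0$ (characteristic zero) to manufacture the adapted basis is exactly right. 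The paper instead works over the product ring all at once: Lemma \ref{shape of phi} triangularizes $\varphi^f$ component by component, exploits the conjugation relation $Q=P\varphi(Q)P^{-1}$ (with $Q=Nm_\varphi(P)$) to force uniformity of the diagonal entries across components, and then normalizes the matrix of $\varphi$ itself using the norm computations of Lemma \ref{norm lemma}, with a separate Claim to reduce the scalar case to $\text{diag}((\alpha,1,\dots,1),(\alpha,1,\dots,1))$. Your version is more conceptual and avoids the intermediate triangularization lemma; the paper's version stays uniformly in $GL_2(E^{\mid\mathcal{S}_{L_0}\mid})$ and produces explicit change-of-basis matrices that it reuses later. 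Both are valid; yours is cleaner as a self-contained proof of the proposition, while the paper's bookkeeping is more directly tied to the norm formalism it relies on in subsequent sections.
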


\noindent To prove Proposition $\ref{new shape of phi}$, we use the following lemma.

\begin{lemma}
\label{shape of phi}Let $D\ $be as in Proposition $\ref{new shape of phi}$.
After enlarging $E$ if necessary, the following hold:

\begin{enumerate}
\item[(1)] If $\varphi^{f}$ is not an $E^{\times}$-scalar times the identity
map, then there exists an ordered basis $\underline{\eta}$ of $D$ such that
$[\varphi]_{\underline{\eta}}=\left(
\begin{array}
[c]{ll}%
\vec{\varepsilon} & \vec{0}\\
\vec{\eta} & \vec{\theta}%
\end{array}
\right)  ,$ with the additional properties that:

\begin{enumerate}
\item[(a)] If $Nm_{\varphi}(\vec{\varepsilon})\neq$ $Nm_{\varphi}(\vec{\theta
}),$ then $\vec{\eta}=\vec{0}$ and

\item[(b)] If $Nm_{\varphi}(\vec{\varepsilon})=$ $Nm_{\varphi}(\vec{\theta}),$
then $\vec{\varepsilon}=\vec{\theta}$ and $\vec{\eta}_{\varphi}=\vec{1}, $
where $\vec{\eta}_{\varphi}$ is the $(2,1)$ entry of the matrix $Nm_{\varphi
}\left(  [\varphi]_{\underline{\eta}}\right)  .$
\end{enumerate}

\item[(2)] If $\varphi^{f}=\alpha\cdot\vec{id}\ $for some $\alpha\in
E^{\times},$ then there exists an ordered basis $\underline{\eta}$ of $D$ such
that $[\varphi]_{\underline{\eta}}=$ \textnormal{diag}$\left(  (\alpha
,1,...,1),(\alpha,1,...,1)\right)  .$
\end{enumerate}
\end{lemma}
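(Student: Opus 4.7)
The plan is to exploit the observation that $\varphi^{f}$ is $E^{\mid\mathcal{S}_{L_{0}}\mid}$-linear with matrix $Nm_{\varphi}([\varphi]_{\underline{e}})$ in any basis, and that under a basis change by $M_{ch}$ it transforms by pure conjugation $M_{ch}^{-1}(\cdot)M_{ch}$ since $\varphi^{f}$ fixes $E^{\mid\mathcal{S}_{L_{0}}\mid}$ pointwise. Writing $A=(A_{0},\ldots,A_{f-1})$ for $[\varphi]_{\underline{e}}$ and $B_{i}:=A_{i}A_{i+1}\cdots A_{i-1}\in M_{2}(E)$, the identity $B_{i+1}=A_{i}^{-1}B_{i}A_{i}$ makes the matrices $B_{i}$ pairwise $E$-conjugate, with a common characteristic polynomial.

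For part (2), the hypothesis $\varphi^{f}=\alpha\cdot\vec{id}$ means $A_{0}A_{1}\cdots A_{f-1}=\alpha I$ together with the cyclic shifts. I would construct the required basis by the recursion $M_{i+1}=c_{i}A_{i}^{-1}M_{i}$ with $c_{0}=\alpha$, $c_{i}=1$ for $i\geq 1$, and $M_{0}=I$; the cyclic consistency $M_{f}=M_{0}$ is precisely the hypothesis, and in the new basis $[\varphi]$ equals $\mathrm{diag}\bigl((\alpha,1,\ldots,1),(\alpha,1,\ldots,1)\bigr)$ as required. No enlargement of $E$ is needed here.

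For part (1), each $B_{i}$ is non-scalar. After enlarging $E$ to contain an eigenvalue $\lambda$ of $B_{0}$ (equivalently of every $B_{i}$ by $E$-conjugacy), the $\lambda$-eigenspace of each $B_{i}$ is $1$-dimensional over $E$. Pick an eigenvector $v_{i}\in E^{2}$ at every coordinate and assemble $\vec{v}=(v_{0},\ldots,v_{f-1})\in(E^{\mid\mathcal{S}_{L_{0}}\mid})^{2}$; the relation $B_{i}A_{i}=A_{i}B_{i+1}$ shows that $A\varphi(\vec{v})$ again lies in the (rank-one over $E^{\mid\mathcal{S}_{L_{0}}\mid}$) $\lambda$-eigenspace of $\varphi^{f}$, so $A\varphi(\vec{v})=\vec{\theta}\,\vec{v}$ for some $\vec{\theta}\in(E^{\mid\mathcal{S}_{L_{0}}\mid})^{\times}$. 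Taking $\vec{v}$ as $\eta_{2}$ and completing arbitrarily to $(\eta_{1},\eta_{2})$ yields the lower-triangular shape $[\varphi]_{\underline{\eta}}$ with invertible $\vec{\varepsilon},\vec{\theta}$.

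The two additional properties are then normalizations inside this lower-triangular class. If $Nm_{\varphi}(\vec{\varepsilon})\neq Nm_{\varphi}(\vec{\theta})$, the unipotent change $\eta_{1}\mapsto\eta_{1}+c\eta_{2}$ replaces $\vec{\eta}$ by $\vec{\eta}-c\vec{\varepsilon}+\vec{\theta}\varphi(c)$; the map $c\mapsto c\vec{\varepsilon}-\vec{\theta}\varphi(c)$ is an $E$-linear endomorphism of the $f$-dimensional $E$-space $E^{\mid\mathcal{S}_{L_{0}}\mid}$ whose kernel is trivial by Lemma \ref{norm lemma}(2) precisely because the norms disagree, so a unique $c$ kills $\vec{\eta}$. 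If instead $Nm_{\varphi}(\vec{\varepsilon})=Nm_{\varphi}(\vec{\theta})$, Lemma \ref{norm lemma}(2) produces $a\in(E^{\mid\mathcal{S}_{L_{0}}\mid})^{\times}$ with $a\vec{\theta}=\vec{\varepsilon}\varphi(a)$, and rescaling $\eta_{1}\mapsto a\eta_{1}$ normalizes $\vec{\varepsilon}$ to $\vec{\theta}$; a short induction then identifies the $(2,1)$-entry of $Nm_{\varphi}([\varphi])$ as $Y=Nm_{\varphi}(\vec{\theta})\cdot Tr_{\varphi}(\vec{\eta}/\vec{\theta})$, which is $\varphi$-fixed (hence a scalar in $E$ embedded diagonally) and nonzero because $\varphi^{f}$ is non-scalar in Case (1); the final rescaling $\eta_{2}\mapsto Y\eta_{2}$ preserves $\vec{\varepsilon}=\vec{\theta}$ and sends $Y$ to $\vec{1}$. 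The main obstacle throughout is the construction in part (1) of the Frobenius-stable line: one must consistently glue together local $E$-eigenvectors of the pairwise conjugate matrices $B_{i}$ across all coordinates, and invoke $B_{i+1}=A_{i}^{-1}B_{i}A_{i}$ to ensure the Frobenius image of the assembled vector remains in the same rank-one eigenspace so that a well-defined semilinear eigenvalue $\vec{\theta}$ emerges.
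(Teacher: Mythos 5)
Your proof is correct, and both halves take a genuinely different and arguably cleaner route than the paper. For part $(2)$ the paper proves a separate Claim via a chain of auxiliary conjugations (first reducing to all-but-one triangular blocks, then solving a trace condition with an explicitly written vector $\vec{z}$); your recursion $M_{i+1}=c_{i}A_{i}^{-1}M_{i}$ with $c_{0}=\alpha$, $c_{i}=1$ for $i\geq 1$ collapses that into a one-line closure check $M_{f}=\alpha(A_{0}\cdots A_{f-1})^{-1}=M_{0}$, which is exactly the scalar hypothesis, and in the new basis $([\varphi]_{\underline{\eta}})_{i}=M_{i}^{-1}A_{i}M_{i+1}=c_{i}I$ is immediate. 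For part $(1)$ the paper works on the matrix of $\varphi^{f}$: it triangularizes $Nm_{\varphi}(P)$, tracks how the eigenvalues $\alpha_{i},\delta_{i}$ are permuted under the cyclic conjugation $Q_{i}=P_{i}Q_{i+1}P_{i}^{-1}$, conjugates by coordinate-wise permutation matrices $R_{i}$ to sort them, and then reads off the shape of $P$ from the resulting shape of $Q$; you instead build a $\varphi$-stable rank-one direct summand straightaway, by choosing a common eigenvalue $\lambda$ of the pairwise-conjugate matrices $B_{i}$ and gluing the one-dimensional $\lambda$-eigenlines coordinate by coordinate, with $B_{i}A_{i}=A_{i}B_{i+1}$ guaranteeing that $A_{i}v_{i+1}$ stays in the $\lambda$-eigenline of $B_{i}$ so that $\varphi(\vec{v})=\vec{\theta}\vec{v}$. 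The normalizations you supply for $(1)(a)$ and $(1)(b)$ are also sound: in $(a)$ the unipotent change $\eta_{1}\mapsto\eta_{1}+c\eta_{2}$ modifies the $(2,1)$ entry to $\vec{\eta}-c\vec{\varepsilon}+\vec{\theta}\varphi(c)$, and injectivity of $c\mapsto c\vec{\varepsilon}-\vec{\theta}\varphi(c)$ on the $f$-dimensional $E$-space $E^{\mid\mathcal{S}_{L_{0}}\mid}$ (granted by Lemma \ref{norm lemma}(2) when the norms differ) gives surjectivity and kills $\vec{\eta}$; in $(b)$ the rescaling $\eta_{1}\mapsto a\eta_{1}$ via a solution of $\vec{\varepsilon}\varphi(a)=a\vec{\theta}$ equalizes the diagonal, your formula $Y=Nm_{\varphi}(\vec{\theta})\,Tr_{\varphi}(\vec{\eta}/\vec{\theta})$ for the $(2,1)$ entry of $Nm_{\varphi}$ is correct and $\varphi$-fixed hence of the form $y\cdot\vec{1}$, nonvanishing precisely because $\varphi^{f}$ is non-scalar, and the final $E^{\times}$-scalar rescaling of $\eta_{2}$ normalizes it to $\vec{1}$ while leaving the diagonal untouched. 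The tradeoff: the paper's computation stays entirely at the level of explicit matrix identities for $Nm_{\varphi}$, while your version isolates the conceptual content (a non-scalar $\varphi^{f}$ has a $\varphi$-stable eigenline after enlarging $E$) and makes part $(2)$ almost trivial.
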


\begin{proof}
${(1)\ }$Since $\varphi^{f}$ is an $E^{\mid\mathcal{S}_{L_{0}}\mid}$-linear
isomorphism, extending $E$ if necessary, there exists an ordered basis
$\underline{e}$ of $D$ such that $[\varphi^{f}]_{\underline{e}}=\left(
\begin{array}
[c]{ll}%
\vec{\alpha} & \vec{0}\\
\vec{\gamma} & \vec{\delta}%
\end{array}
\right)  .$ With the convention of Notation \ref{first notation} we have
$\alpha_{i}\delta_{i}\not =0$ for all $i\in I_{0}$ (because $\varphi$ is an
automorphism), and the basis can be chosen so that $\gamma_{i}=0$ whenever
$\alpha_{i}\not =\delta_{i}$ and $\gamma_{i}\in\{0,1\}$ whenever $\alpha
_{i}=\delta_{i}.$ We repeatedly act by $\varphi$ on the equation
$(\varphi(e_{1}),\varphi(e_{2}))=(e_{1},e_{2})[\varphi]_{\underline{e}}\, $
and get $\left(  \varphi^{f}(e_{1}),\varphi^{f}(e_{2})\right)  =(e_{1},e_{2}%
)$$Nm_{\varphi}([\varphi]_{\underline{e}}).$ Let $P=[\varphi]_{\underline{e}%
}=\left(  P_{0},P_{1},...,P_{f-1}\right)  $ and $Q=$ $Nm_{\varphi}(P)=\left(
Q_{0},Q_{1},...,Q_{f-1}\right)  .$ Since $Q=P\varphi(Q)P^{-1},$ we have
$Q_{i}=P_{i}Q_{i+1}P_{i}^{-1}$ and $\{\alpha_{i+1},\delta_{i+1}\}=\{\alpha
_{i},\delta_{i}\}~$for all $i.$ Since for all $i,$\ $\alpha_{i}\delta_{i}=\det
Q_{0}=d,$ we have $\{\alpha_{i+1},d\alpha_{i+1}^{-1}\}=\{\alpha_{i}%
,d\alpha_{i}^{-1}\}.$ Let $\alpha=d\alpha_{0}^{-1}.$ Then $\alpha_{i}%
\in\{\alpha,d\alpha^{-1}\}$ for all $i,$ and $Nm_{\varphi}(P)=\left(
\begin{array}
[c]{ll}%
(\alpha_{0},...,\alpha_{f-1}) & (0~,~.~.~.~,~0)\\
(\gamma_{0},...,\gamma_{f-1}) & (\delta_{0},...,\delta_{f-1})
\end{array}
\right)  $ with $\delta_{i}=d\alpha_{i}^{-1}.$ \noindent If $\alpha^{2}%
\not =d$ then, $\vec{\gamma}=\vec{0}$ and if $\alpha^{2}=d,$ then $\gamma
_{i}\in\{0,1\}\ $for all $i.$ We conjugate by the matrix $R=\left(
R_{0},R_{1},...,R_{f-1}\right)  ,$ where $R_{i}$\noindent$=\left(
\begin{array}
[c]{ll}%
1 & 0\\
0 & 1
\end{array}
\right)  $ or $\left(
\begin{array}
[c]{ll}%
0 & 1\\
1 & 0
\end{array}
\right)  $ depending on whether $\alpha_{i}=d\alpha^{-1}\ $or$\ \alpha$
respectively, and get $RQR^{-1}$\noindent$=\left(
\begin{array}
[c]{ll}%
d\alpha^{-1}\cdot\vec{1} & \ \vec{\gamma}\\
\ \ \ \ \ \ \vec{0} & \alpha\cdot\vec{1}%
\end{array}
\right)  .$ \noindent If $\alpha^{2}\not =d,$\ then $RQR^{-1}=$ diag$((d\alpha
^{-1},...,d\alpha^{-1}),(\alpha,\alpha,...,\alpha)).$ If $\alpha^{2}=d,$ then
$\mathnormal{Nm}{(P)}=\left(
\begin{array}
[c]{ll}%
\alpha\cdot\vec{1} & \ \ \ \vec{1}\\
\ \ \ \vec{0} & \alpha\cdot\vec{1}%
\end{array}
\right)  .\ $Indeed, since $P\varphi(Q)P^{-1}=Q,\ $if $\gamma_{j}=0$ for some
$j$ then $\gamma_{j+1}=0$ and $\varphi^{f}=\alpha\cdot i\vec{d}$ a
contradiction. Therefore $\vec{\gamma}=\vec{1}.$ We have proved that there
exists some ordered basis $\underline{\eta}$ of $D$ over $E^{\mid
\mathcal{S}_{L_{0}}\mid}$ such that $[\varphi^{f}]_{\underline{\eta}}=\left(
\begin{array}
[c]{ll}%
\alpha\cdot\vec{1} & \ \ \ \vec{0}\\
\gamma\cdot\vec{1} & \frac{d}{\alpha}\cdot\vec{1}%
\end{array}
\right)  $ for some $\alpha\in E^{\times}$ and some $\gamma\in E$ with
$\gamma=0$ if $\alpha^{2}\not =d$ and $\gamma=1$ if $\alpha^{2}=d.$ We compute
the matrix of $\varphi$ with respect to that basis $\underline{\eta}.$ The
relations ${Nm}_{\varphi}\left(  [\varphi]_{\underline{\eta}}\right)
=[\varphi^{f}]_{\underline{\eta}}$ and $[\varphi]_{\underline{\eta}}%
\varphi\left(  Nm_{\varphi}\left(  [\varphi]_{\underline{\eta}}\right)
\right)  =$ $Nm_{\varphi}\left(  [\varphi]_{\underline{\eta}}\right)
[\varphi]_{\underline{\eta}}$ and a direct computation imply that:
\noindent(1) If $\alpha^{2}\not =d,$ then the non diagonal entries of
$[\varphi]_{\underline{\eta}}$ are $\vec{0},$ \noindent and (2) If $\alpha
^{2}=d,$ then the $(1,2)\ $entry of $[\varphi]_{\underline{\eta}}\ $is
$\vec{0}$ and the diagonal entries are equal. This concludes the proof of part
(1). \noindent Part \noindent(2) follows immediately from the fact that the
matrix of $\varphi^{f}$ is basis-independent combined with the following claim.
\end{proof}

\begin{claim}
\textit{Let }$P\in GL_{2}(E^{\mid\mathcal{S}_{L_{0}}\mid})$\textit{\ be such
that Nm}$_{\varphi}(P)=$ \textnormal{diag\ }$(\alpha\cdot\vec{1},\alpha\cdot
\vec{1})$\textit{\ for some }$\alpha\in E^{\times}.$\textit{\ Then there
exists some matrix }$Q^{\ast}\in GL_{2}(E^{\mid\mathcal{S}_{L_{0}}\mid}%
)$\textit{\ such that }%
\[
Q^{\ast}P\varphi(Q^{\ast})^{-1}=\text{\textnormal{diag}}((\alpha
,1,..,1),\mathit{\noindent}(\alpha,1,..,1)).
\]

\end{claim}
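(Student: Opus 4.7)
The plan is to translate everything into components of the product decomposition $E^{\mid\mathcal{S}_{L_0}\mid}\cong\prod_{i=0}^{f-1}E$. Under this identification, $P\in GL_2(E^{\mid\mathcal{S}_{L_0}\mid})$ becomes an $f$-tuple $(P_0,\dots,P_{f-1})$ with $P_i\in GL_2(E)$, and the shift $\varphi(x_0,\dots,x_{f-1})=(x_1,\dots,x_{f-1},x_0)$ extends entrywise to cyclic shift of such tuples. Consequently $Nm_{\varphi}(P)=P\varphi(P)\cdots\varphi^{f-1}(P)$ has $j$-th component equal to the cyclic product $P_jP_{j+1}\cdots P_{j+f-1}$ (indices mod $f$), and the hypothesis says each of these cyclic products equals $\alpha I_2$. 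Similarly, if $Q^{\ast}=(Q_0,\dots,Q_{f-1})$, then $Q^{\ast}P\varphi(Q^{\ast})^{-1}$ has $j$-th component $Q_jP_jQ_{j+1\bmod f}^{-1}$.

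The target $\mathrm{diag}((\alpha,1,\dots,1),(\alpha,1,\dots,1))$ has $0$-th component $\alpha I_2$ and all other components $I_2$, so the desired $Q^{\ast}$ is characterized by the system
\[
Q_0P_0Q_1^{-1}=\alpha I_2,\quad Q_jP_j=Q_{j+1}\text{ for }1\le j\le f-2,\quad Q_{f-1}P_{f-1}=Q_0.
\]
The construction of $Q^{\ast}$ is now forced: set $Q_0:=I_2$, $Q_1:=\alpha^{-1}P_0$, and recursively $Q_{j+1}:=Q_jP_j$ for $1\le j\le f-2$. The first $f-1$ equations hold by construction; unwinding the recursion gives $Q_{f-1}=\alpha^{-1}P_0P_1\cdots P_{f-2}$, so the last equation becomes $\alpha^{-1}P_0P_1\cdots P_{f-1}=I_2$, which is precisely the $0$-th component of the hypothesis $Nm_{\varphi}(P)=\mathrm{diag}(\alpha\cdot\vec{1},\alpha\cdot\vec{1})$. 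Each $Q_j$ is a product of matrices in $GL_2(E)$, hence $Q^{\ast}\in GL_2(E^{\mid\mathcal{S}_{L_0}\mid})$.

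There is no genuine obstacle here; once the product-ring translation is in place the argument is a direct recursive construction. The one small point worth noting is that only the $0$-th component of the hypothesis is actually used: the remaining components $P_j\cdots P_{j+f-1}=\alpha I_2$ for $j\ge 1$ follow automatically from the $j=0$ case, because $\alpha I_2$ is central and any cyclic rearrangement of a product equal to $\alpha I_2$ is again equal to $\alpha I_2$.
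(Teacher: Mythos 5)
Your proof is correct and notably simpler than the paper's own argument. Both work in the product decomposition $E^{\mid\mathcal{S}_{L_0}\mid}\cong\prod_{i=0}^{f-1}E$ and accumulate $P$-factors into the conjugating matrix, but you carry this idea to its natural conclusion in one step: setting $Q_0=I_2$, $Q_1=\alpha^{-1}P_0$ and $Q_{j+1}=Q_jP_j$ makes the first $f-1$ components of $Q^{\ast}P\varphi(Q^{\ast})^{-1}$ equal to the target by construction, and the last component equals $I_2$ by the $0$-th coordinate of the norm hypothesis. The paper instead proceeds through several intermediate normal forms: it first produces a $Q$ that makes $T_0,\dots,T_{f-2}$ merely lower triangular with $T_{f-1}$ arbitrary, then invokes the norm relation to show $\beta_{f-1}=0$ so that everything is lower triangular, then conjugates by a diagonal matrix $R$ (built from cumulative products of the diagonal entries) to normalize the diagonals to $(\alpha,1,\dots,1)$, and finally conjugates by a unipotent matrix $S$ to clear the remaining $(2,1)$-entries. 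None of these way-stations is logically necessary for the statement, and your version exposes that. Your closing remark, that only the $0$-th coordinate of the hypothesis is actually used because any cyclic rearrangement of a product equal to the central element $\alpha I_2$ is again $\alpha I_2$, is also correct.
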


\begin{proof}
As above we write $P=\left(  P_{0},P_{1},...,P_{f-1}\right)  .$ We easily see
that there exist matrices $Q_{i}\in GL_{2}(E)$ such that the matrix $Q=\left(
Q_{0},Q_{1},...,Q_{f-1}\right)  $ has the property \noindent$QP\varphi
(Q)^{-1}=\left(  T_{0},T_{1},...,T_{f-2},T_{f-1}\right)  $ for some triangular
matrices $T_{i}=\left(
\begin{array}
[c]{cc}%
\alpha_{i} & 0\\
\gamma_{i} & \delta_{i}%
\end{array}
\right)  $ for $i=0,1,...,f-2,$ and some matrix $T_{f-1}=\left(
\begin{array}
[c]{cc}%
\alpha_{f-1} & \beta_{f-1}\\
\gamma_{f-1} & \delta_{f-1}%
\end{array}
\right)  \in GL_{2}(E).\ $In the proof of this claim, the entries $\alpha
_{i},\beta_{i},\gamma_{i}$ and $\delta_{i}$ are having independent meaning and
should not be confused with those used before. \noindent The equation
$Nm_{\varphi}(QP\varphi(Q)^{-1})=$ diag$(\alpha\cdot\vec{1},\alpha\cdot\vec
{1})$ implies that $\prod\limits_{i=0}^{f-1}\alpha_{i}=\alpha$ and
$(\prod\limits_{i=0}^{f-2}\alpha_{i})\beta_{f-1}=0.$ Hence $\beta_{f-1}=0$ and
$QP\varphi(Q)^{-1}$\noindent\noindent$=\left(
\begin{array}
[c]{cc}%
\vec{\alpha} & \vec{0}\\
\vec{\gamma} & \vec{\delta}%
\end{array}
\right)  $ with \noindent$Nm_{\varphi}(\vec{\alpha})=$ $Nm_{\varphi}%
(\vec{\delta})=\alpha\cdot\vec{1}.$ \noindent Let $\vec{x}=(1,\alpha_{0}%
\alpha^{-1},\noindent\alpha_{0}\alpha_{1}\alpha^{-1},...,\alpha_{0}\alpha
_{1}\cdots\alpha_{f-2}\alpha^{-1}),\ $ \noindent$\vec{y}=(1,\delta_{0}%
\alpha^{-1},\delta_{0}\delta_{1}\alpha^{-1},...,\delta_{0}\delta_{1}%
\cdots\delta_{f-2}\alpha^{-1})$ \noindent and $R=$ diag$(\vec{x},\vec{y})\cdot
Q.$ A computation shows that
\[
RP\varphi(R)^{-1}=\left(
\begin{array}
[c]{cc}%
(\alpha,1,..,1) & \vec{0}\\
\vec{\zeta} & (\alpha,1,..,1)
\end{array}
\right)
\]
for some $\vec{\zeta}\in\left(  E\right)  ^{\mid\mathcal{S}_{L_{0}}\mid}.$
Since $Nm_{\varphi}(RP\varphi(R)^{-1})=$ diag$(\alpha\cdot\vec{1},\alpha
\cdot\vec{1})$ we have $\zeta_{0}+\alpha\sum\limits_{i=1}^{f-1}\zeta_{i}=0.$
\noindent Let $S=\left(
\begin{array}
[c]{cc}%
(1,1,...,1) & (0,0,...,0)\\
(z_{0},z_{1},...,z_{f-1}) & (1,1,...,1)
\end{array}
\right)  $, where $z_{0}=1,$\textit{\ }$z_{1}=1-\zeta_{1}-\zeta_{2}%
-\cdots-\zeta_{f-1},$ $z_{2}=1-\zeta_{2}-\cdots-\zeta_{f-1},...,$
$z_{f-2}=1-\zeta_{f-2}-\zeta_{f-1}$ and $z_{f-1}=1-\zeta_{f-1},$ and let
$Q^{\ast}=SR.$ The fact that $\zeta_{0}+\alpha\sum\limits_{i=1}^{f-1}\zeta
_{i}=0$ and a simple computation yield that $Q^{\ast}P\varphi(Q^{\ast})^{-1}=$
diag$\ ((\alpha,1,..,1),(\alpha,1,..,1)).$\textit{\ }
\end{proof}
\begin{proof}
[\textit{Proof of Proposition $\ref{new shape of phi}$}]Again, the notations
in the proof of this lemma are having independent meaning and should not be
confused with those of previous sections. Choose $\underline{\eta}\ $as in
Lemma \ref{shape of phi}. \noindent In case\textit{\ }(1)(a)$\ $so
that$\ [\varphi]_{\underline{\eta}}=$ diag$(\vec{\varepsilon},\vec{\theta})$
with $Nm_{\varphi}(\vec{\varepsilon})\neq$ $Nm_{\varphi}(\vec{\theta}),$ let
$\alpha_{1},\delta_{1}\in E^{\times}\ $be such that $Nm_{\varphi}%
(\vec{\varepsilon})=\alpha_{1}^{f}\cdot\vec{1}$ and $Nm_{\varphi}(\vec{\theta
})=\delta_{1}^{f}\cdot\vec{1}.$ By Lemma \ref{norm lemma} there exists a
matrix $M\in GL_{2}(E^{\mid\mathcal{S}_{L_{0}}\mid})$ such that $M\left(
[\varphi]_{\underline{\eta}}\right)  \varphi(M)^{-1}=$ diag$(\alpha_{1}%
\cdot\vec{1},\delta_{1}\cdot\vec{1}),$ and clearly $\alpha_{1}^{f}\neq
\delta_{1}^{f}.$ This gives the first possibility of the proposition.
\noindent In case (1)(b) of Lemma \ref{shape of phi}, let$\ \alpha_{1}~$an
$f$-th root of $\alpha.$ By Lemma \ref{norm lemma} there exists a matrix $M\in
GL_{2}(E^{\mid\mathcal{S}_{L_{0}}\mid})$ such that $M\left(  [\varphi
]_{\underline{\eta}}\right)  \varphi(M)^{-1}=\left(
\begin{array}
[c]{cc}%
\alpha_{1}\cdot\vec{1} & \vec{0}\\
\vec{\gamma} & \alpha_{1}\cdot\vec{1}%
\end{array}
\right)  .\ $ \noindent Since $[\varphi^{f}]_{\underline{\eta}}=\left(
\begin{array}
[c]{cc}%
\alpha_{1}^{f}\cdot\vec{1} & \vec{0}\\
\alpha_{1}^{f-1}Tr_{\varphi}(\vec{\gamma}) & \alpha_{1}^{f}\cdot\vec{1}%
\end{array}
\right)  $ and $[\varphi^{f}]_{\underline{e}}=\left(
\begin{array}
[c]{cc}%
\alpha\cdot\vec{1} & \vec{0}\\
\vec{1} & \alpha\cdot\vec{1}%
\end{array}
\right)  ,$ we have $Tr_{\varphi}(\vec{\gamma})\neq\vec{0}.\ $Let $M^{\ast
}=\left(
\begin{array}
[c]{cc}%
f\cdot\vec{1} & \vec{0}\\
\vec{z} & Tr_{\varphi}(\vec{\gamma})
\end{array}
\right)  ,\ $where%
\[
\vec{z}=\left(  0,1,...,f-1\right)  Tr_{\varphi}(\vec{\gamma})-f\left(
\gamma_{0},\gamma_{0}+\gamma_{1},...,\gamma_{0}+\gamma_{1}+\cdots\gamma
_{f-2}\right)  .
\]
\noindent Then $\left(
\begin{array}
[c]{cc}%
\alpha_{1}\cdot\vec{1} & \vec{0}\\
\vec{\gamma} & \alpha_{1}\cdot\vec{1}%
\end{array}
\right)  \varphi(M^{\ast})=M^{\ast}\left(
\begin{array}
[c]{cc}%
\alpha_{1}\cdot\vec{1} & \vec{0}\\
\vec{1} & \alpha_{1}\cdot\vec{1}%
\end{array}
\right)  .$ This gives the third possibility of the proposition. \noindent
Finally, in case (2)(b) of Lemma \ref{shape of phi}, let $\alpha_{1}\in
E^{\times}$ be an $f$-th root of $\alpha$ and proceed as in case (1).
\noindent This gives the second possibility of the proposition and concludes
the proof.
\end{proof}

\begin{definition}
\label{defin of f-diagonal...} A $\varphi$-module $D$ is called \textnormal{F}%
-semisimple, \textnormal{F}-scalar or non-\textnormal{F}-semisimple if and only if the
$E^{\mid\mathcal{S}_{L_{0}}\mid}$-linear map $\varphi^{f}$ has the
corresponding property.
\end{definition}

\noindent One easily sees that $D$ is \textnormal{F}-semisimple if and only if
there exists some ordered basis with respect to which the matrix of Frobenius
is as in cases (1) or (2) of Proposition $\ref{new shape of phi}$, with $D$
being non \textnormal{F}-scalar in case (1) and \textnormal{F}-scalar in case
(2).$\ $The $\varphi$-module $D$ is not \textnormal{F}-semisimple if and only if
there exists an ordered basis with respect to which the matrix of Frobenius is
as in case (3). A basis of $D$ in which Frobenius is normalized as in
Proposition $\ref{new shape of phi}$ will be called standard. Unless otherwise
stated, the matrix of any operator on $D$ will be considered with respect to a
fixed standard basis. In the next proposition we determine the matrix of the
monodromy operator with respect to a standard basis\ $\underline{\eta}.$

\begin{proposition}
\label{the monodromy} Let $D$ be a rank two $(\varphi,N,E)$-module.

\begin{enumerate}
\item If $D$ is \textnormal{F}-semisimple and $[\varphi]_{\underline{\eta}}=$
\textnormal{diag}$(\alpha\cdot\vec{1},\delta\cdot\vec{1}),$ then the monodromy
operator is as follows:

\begin{enumerate}
\item If $\alpha^{f}\neq p^{\pm f}\delta^{f},$ then $N=0;$

\item If $\alpha^{f}=p^{f}\delta^{f},$ then $[N]_{\underline{\eta}}=\left(
\begin{array}
[c]{cc}%
\vec{0} & \vec{0}\\
\vec{n} & \vec{0}%
\end{array}
\right)  ,$ where \noindent$\vec{n}=n(1,\zeta,\zeta^{2},...,\zeta^{f-1}),$
with $\zeta=\frac{\alpha}{p\delta}$ and $n\in E;$

\item If $\delta^{f}=p^{f}\alpha^{f},$ then $[N]_{\underline{\eta}}=\left(
\begin{array}
[c]{cc}%
\vec{0} & \vec{n}\\
\vec{0} & \vec{0}%
\end{array}
\right)  ,$ where \noindent$\vec{n}=n(1,\varepsilon,\varepsilon^{2}%
,...,\varepsilon^{f-1}),$ with $\varepsilon=\frac{\delta}{p\alpha}$ and $n\in
E.$
\end{enumerate}

\item If $D$ is non-\textnormal{F}-semisimple, then $N=0.$
\end{enumerate}
\end{proposition}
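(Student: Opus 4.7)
The plan is to translate the commutation relation $N\varphi=p\varphi N$ into a matrix identity, exploit that $\varphi^{f}$ acts as the identity on $E^{\mid\mathcal{S}_{L_{0}}\mid}$, and solve the resulting system for each of the three standard forms of $[\varphi]_{\underline{\eta}}$ given by Proposition \ref{new shape of phi}. Writing $[N]_{\underline{\eta}}=\left(\begin{smallmatrix} \vec{a} & \vec{b} \\ \vec{c} & \vec{d}\end{smallmatrix}\right)$ with entries in $E^{\mid\mathcal{S}_{L_{0}}\mid}$, the semilinearity of $\varphi$ and the $E^{\mid\mathcal{S}_{L_{0}}\mid}$-linearity of $N$ turn $N\varphi=p\varphi N$ into $[N]_{\underline{\eta}}\,[\varphi]_{\underline{\eta}}=p\,[\varphi]_{\underline{\eta}}\,\varphi\!\left([N]_{\underline{\eta}}\right).$ The nilpotence of $N$ will come along for free, since in each case the constraints will already force $N$ to be strictly triangular.

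For case (1), with $[\varphi]_{\underline{\eta}}=\mathrm{diag}(\alpha\cdot\vec{1},\delta\cdot\vec{1})$, the four matrix entries of the above identity give the decoupled scalar equations $\vec{a}=p\varphi(\vec{a})$, $\vec{d}=p\varphi(\vec{d})$, $\delta\vec{b}=p\alpha\varphi(\vec{b})$, and $\alpha\vec{c}=p\delta\varphi(\vec{c})$. Iterating $f$ times and using $\varphi^{f}=\mathrm{id}$ gives $\vec{a}=p^{f}\vec{a}$ and $\vec{d}=p^{f}\vec{d}$, hence $\vec{a}=\vec{d}=\vec{0}$; similarly $\vec{b}=(p\alpha/\delta)^{f}\vec{b}$ and $\vec{c}=(p\delta/\alpha)^{f}\vec{c}$. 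So $\vec{b}\neq\vec{0}$ forces $\delta^{f}=p^{f}\alpha^{f}$ and $\vec{c}\neq\vec{0}$ forces $\alpha^{f}=p^{f}\delta^{f}$, which settles (a) at once (both fail, so $N=0$). In case (b), the relation $\vec{c}=(p\delta/\alpha)\varphi(\vec{c})$ reads componentwise $c_{i+1}=\zeta c_{i}$ with $\zeta=\alpha/(p\delta)$ (noting $\zeta^{f}=1$ exactly because $\alpha^{f}=p^{f}\delta^{f}$), so $\vec{c}=c_{0}(1,\zeta,\ldots,\zeta^{f-1})$; case (c) is symmetric with $\varepsilon=\delta/(p\alpha)$.

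For case (2), with $[\varphi]_{\underline{\eta}}=\left(\begin{smallmatrix} \alpha\cdot\vec{1} & \vec{0} \\ \vec{1} & \alpha\cdot\vec{1}\end{smallmatrix}\right)$, the four matrix entries of $[N][\varphi]=p[\varphi]\varphi([N])$ give:
\begin{align*}
\alpha\vec{a}+\vec{b} &= p\alpha\varphi(\vec{a}), & \alpha\vec{b} &= p\alpha\varphi(\vec{b}),\\
\alpha\vec{c}+\vec{d} &= p\varphi(\vec{a})+p\alpha\varphi(\vec{c}), & \alpha\vec{d} &= p\varphi(\vec{b})+p\alpha\varphi(\vec{d}).
\end{align*}
The (1,2)-entry gives $\vec{b}=p\varphi(\vec{b})$, hence $\vec{b}=\vec{0}$ by iteration. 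Plugging this into the (2,2)-entry yields $\vec{d}=p\varphi(\vec{d})$, so $\vec{d}=\vec{0}$. The (1,1)-entry then reduces to $\vec{a}=p\varphi(\vec{a})$, giving $\vec{a}=\vec{0}$, and finally the (2,1)-entry to $\vec{c}=p\varphi(\vec{c})$, giving $\vec{c}=\vec{0}$. Thus $N=0$.

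The only genuine step is the iteration trick $x=c\varphi(x)\Rightarrow x=c^{f}x$ using $\varphi^{f}=\mathrm{id}$; after that the entire proposition is a case-by-case bookkeeping exercise, with the explicit form of $\vec{n}$ in (1)(b)--(c) extracted by solving the first-order recursion $x_{i+1}=\zeta x_{i}$ (resp.\ $\varepsilon x_{i}$) on the components. I expect no real obstacle; the only mild subtlety is verifying that the ratios $(p\alpha/\delta)^{f}$ and $(p\delta/\alpha)^{f}$ cannot both equal $1$ simultaneously (they would force $p^{2f}=1$), which is what makes the three subcases of (1) mutually exclusive and exhaustive.
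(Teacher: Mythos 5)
Your proof is correct and follows essentially the same route as the paper: translate $N\varphi = p\varphi N$ into $[N]_{\underline{\eta}}[\varphi]_{\underline{\eta}} = p[\varphi]_{\underline{\eta}}\varphi([N]_{\underline{\eta}})$ and solve entrywise using the fact that $\vec{x}=\vec{\mu}\cdot\varphi(\vec{x})$ has nonzero solutions iff $Nm_{\varphi}(\vec{\mu})=\vec{1}$ (the paper invokes Lemma \ref{norm lemma}; your iteration $x=c\varphi(x)\Rightarrow x=c^{f}x$ with $\varphi^{f}=\mathrm{id}$ is exactly this). Your observation that nilpotence of $N$ comes for free, since the constraints already force $[N]_{\underline{\eta}}$ strictly triangular or zero, is a correct small refinement of the paper's sketch, which cites nilpotence as an additional input.
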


\begin{proof}
The condition $N\varphi=p\varphi N$ is equivalent to $[N]_{\underline{\eta}%
}[\varphi]_{\underline{\eta}}=p$ $[\varphi]_{\underline{\eta}}\varphi
([N]_{\underline{\eta}}).\ $The proposition follows by a short computation,
using Lemma \ref{norm lemma} and taking into account that $N$ is nilpotent.
\end{proof}

\begin{corollary}
\label{simplify N}Let $D$ be a rank two $(\varphi,N,E)$-module with nontrivial
monodromy. There exists an ordered basis $\underline{\eta}$ with respect to
which $[\varphi]_{\underline{\eta}}=$ \textnormal{diag}$(\alpha\cdot\vec{1}%
,\delta\cdot\vec{1})\ $for some $\alpha,\delta\in E^{\times}$ with
$\alpha=p\delta,$ and $[N]_{\underline{\eta}}=\left(
\begin{array}
[c]{cc}%
\vec{0} & \vec{0}\\
\vec{1} & \vec{0}%
\end{array}
\right)  .$
\end{corollary}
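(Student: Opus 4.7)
The plan is to combine Propositions \ref{new shape of phi} and \ref{the monodromy} with a single diagonal change of basis. Since $N$ is nontrivial, part (2) of Proposition \ref{the monodromy} rules out the non-F-semisimple case, so $D$ already admits a standard basis in which $[\varphi]=\textnormal{diag}(\alpha\vec{1},\delta\vec{1})$ for some $\alpha,\delta\in E^{\times}$. Nontriviality of $N$ also excludes case (1)(a) of Proposition \ref{the monodromy}, forcing $\alpha^{f}=p^{\pm f}\delta^{f}$. Swapping the two basis vectors interchanges $\alpha$ with $\delta$ and exchanges cases (1)(b) and (1)(c), so one may assume from the outset that $\alpha^{f}=p^{f}\delta^{f}$; then $[N]$ has $(2,1)$-entry $\vec{n}=n(1,\zeta,\zeta^{2},\ldots,\zeta^{f-1})$, where $\zeta=\alpha/(p\delta)$ satisfies $\zeta^{f}=1$ and $n\in E^{\times}$.

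Next I would apply a change of basis of the shape $M=\textnormal{diag}(\vec{a},\vec{b})$, aiming to rescale Frobenius to $\textnormal{diag}(p\delta\vec{1},\delta\vec{1})$. This amounts to solving the equations $p\delta\,\vec{a}=\alpha\,\varphi(\vec{a})$ and $\delta\,\vec{b}=\delta\,\varphi(\vec{b})$. The existence of nonzero solutions of the former is guaranteed by Lemma \ref{norm lemma}, precisely because $Nm_{\varphi}(p\delta\vec{1})=p^{f}\delta^{f}=\alpha^{f}=Nm_{\varphi}(\alpha\vec{1})$; explicitly, one may take $\vec{a}=(1,\zeta^{-1},\zeta^{-2},\ldots,\zeta^{-(f-1)})$, while any constant $\vec{b}=b\vec{1}$ solves the second equation.

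Finally, since $N$ is $E^{\mid\mathcal{S}_{L_{0}}\mid}$-linear, its matrix transforms by ordinary conjugation $M^{-1}[N]M$, whose $(2,1)$-entry is $\vec{b}^{\,-1}\vec{n}\,\vec{a}$. With the choices above the factors $\zeta^{i}$ in $\vec{n}$ cancel componentwise against the $\zeta^{-i}$ in $\vec{a}$, producing the constant vector $(n/b)\vec{1}$; taking $b=n$ then normalizes the entry to $\vec{1}$, yielding the corollary. The only point worth flagging as the \emph{main obstacle} is recognizing that the geometric-progression shape of $\vec{n}$ dictated by Proposition \ref{the monodromy}(1)(b) dovetails exactly with the shape of $\vec{a}$ dictated by Lemma \ref{norm lemma} — a coincidence ultimately forced by $\zeta^{f}=1$ — so that one and the same diagonal conjugation simultaneously normalizes $\alpha$ to $p\delta$ and $\vec{n}$ to $\vec{1}$. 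Everything else is routine bookkeeping.
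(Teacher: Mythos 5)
Your proof is correct and takes essentially the same route as the paper's: a diagonal change of basis on a standard basis, using the geometric-progression shape of $\vec{n}$ from Proposition \ref{the monodromy} and the relation $\zeta^{f}=1$. The paper writes the change of basis even more compactly as $\eta_{1}'=\eta_{1}$, $\eta_{2}'=\vec{n}\cdot\eta_{2}$ (so Frobenius becomes $\textnormal{diag}(\alpha\vec{1},\tfrac{\alpha}{p}\vec{1})$ rather than your $\textnormal{diag}(p\delta\vec{1},\delta\vec{1})$), but this is an inessential difference in normalization: your matrix $\textnormal{diag}(\vec{a},n\vec{1})$ differs from the paper's by a global $E^{\mid\mathcal{S}_{L_{0}}\mid}$-unit factor, and the same identity $\zeta^{f}=1$ powers both computations.
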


\begin{proof}
If $\alpha^{f}=p^{f}\delta^{f},$ change the basis to $\underline{\eta}%
^{\prime}$ with $\eta_{1}^{\prime}=\eta_{1}$ and $\eta_{2}^{\prime}=\vec
{n}\cdot\eta_{2}.$ If $\delta^{f}=p^{f}\alpha^{f},$ first swap the basis
elements, and then proceed as in the previous case.
\end{proof}

\noindent

When the monodromy operator is nontrivial our standard bases will always be as
in the corollary above.

\subsection{Galois descent data\label{gal descent}}

In this section we determine the action of the Galois group Gal$(L/K)$ on an
arbitrary rank two filtered $\left(  \varphi,N,L/K,E\right)  $-module $D.$

\subsubsection{The Galois action on $L\otimes_{ \mathbb{Q}_{p}}E$%
\label{action F}}

Since $E$ is assumed to be large enough, each embedding $\tau_{j}$ of $L_{0} $
into $E$ extends to an embedding of $L$ into $E$ in exactly $e=[L:L_{0}]$
different ways. For each $j\in\{0,1,...,f-1\},$ let $h_{ij}:L\rightarrow E$
with $i\in\{0,1,...,e-1\}$ be any numbering of the distinct extensions of
$\tau_{j}:L_{0}\rightarrow E$ to $L.$ Each index $s\in\{0,1,...,m-1\}$ can be
written uniquely in the form $s=fi+j$ with $i\in\{0,1,...,e-1\}$ and
$j\in\{0,1,...,f-1\}.$ For each $s=0,1,...,m-1,$ let $\sigma_{s}:=h_{ij}%
.\ $These are all the distinct embeddings of $L$ into $E$ and we fix the
$m$-tuple of embeddings $\mathcal{S}_{L}:=(\sigma_{0},\sigma_{1}%
,...,\sigma_{m-1})$ once and for all. Recall the notation $E^{\mid
\mathcal{S}_{L}\mid}:=\prod\limits_{\ \mathcal{S}_{L}}E.$ The map%
\[
\xi_{L}:L\otimes_{%
\mathbb{Q}
_{p}}E\rightarrow E^{\mid\mathcal{S}_{L}\mid}:x\otimes y\mapsto(\sigma
(x)y)_{\sigma}%
\]
is a ring isomorphism. A simple computation shows that $\xi_{L}(1\otimes
\alpha)=\xi_{L_{0}}(\alpha)^{\otimes e}\ $ for any $\alpha\in L_{0}\otimes_{%
\mathbb{Q}
_{p}}E,$ where $\xi_{L_{0}}$ is the isomorphism of Section $\ref{product ring}%
$. For each vector $\vec{a}\in E^{\mid\mathcal{S}_{L_{0}}\mid}$ we denote
$\vec{a}^{\otimes e}$ the vector of $E^{\mid\mathcal{S}_{L}\mid}$ gotten by e
copies of $\vec{a},$ removing the inner parentheses. For each $g\in
G=\text{Gal}(L/K)$ consider the permutation $\pi(g)$ on $\{0,1,...,m-1\}$
defined by $\sigma_{i}\cdot g=\sigma_{\pi(g)(i)}$ for any $g\in G$ and any
embedding $\sigma_{i}.$ The map $\rho:G\rightarrow S_{m}$ with $\rho
(g)=\pi(g)^{-1}$ is a group monomorphism. We define an $E$-linear $G$-action
on $E^{\mid\mathcal{S}_{L}\mid}$ by setting $g\xi_{L}(\alpha)=\xi_{L}%
(g\alpha)$ for all $g$ and $\alpha.$ If $x\otimes y\in L\otimes_{%
\mathbb{Q}
_{p}}E\ $and $g\in G,$ then $g\xi_{L}(x\otimes y)=(\sigma_{\pi(g)(i)}%
(x)y)_{\sigma_{i}},\ $therefore $g(\sigma_{0}(x)y,\sigma_{1}(x)y,...,\sigma
_{m-1}(x)y)=(\sigma_{\pi(g)(0)}(x)y,...,\sigma_{\pi(g)(m-1)}(x)y)$ for any
$x\otimes y\in L\otimes_{%
\mathbb{Q}
_{p}}E\ $(with indices viewed modulo $m$). From this we easily deduce that
\[
g(x_{0},x_{1},...,x_{m-1})=(x_{\pi(g)(0)},...,x_{\pi(g)(m-1)})
\]
\noindent for any $(x_{0},x_{1},...,x_{m-1})\in E^{\mid\mathcal{S}_{L}\mid}%
\ $and $g\in G.$

\subsubsection{The Galois action on $L_{0}\otimes_{%
\mathbb{Q}
_{p}}E$\label{action F_0}}

We use the isomorphism $\xi_{L_{0}}$ of Section $\ref{product ring}$ to define
an $E$-linear $G$-action on $E^{\mid\mathcal{S}_{L_{0}}\mid}$ by setting
$g\xi_{L_{0}}(x)=\xi_{L_{0}}(gx)$ for all $g\in G$ and $x\in L_{0}\otimes_{%
\mathbb{Q}
_{p}}E.$ For each $g\in G$ there exists a unique integer $n(g)\in
\{0,1,...,f-1\}$ such that $g\mid_{L_{0}}=\tau^{n(g)}.$ \noindent One easily
sees that $g\vec{\alpha}=(\alpha_{n(g)},\alpha_{n(g)+1},...,\alpha
_{n(g)+f-1})$ for all $g$ and $\vec{\alpha}=(\alpha_{0},\alpha_{1}%
,...,\alpha_{f-1}).$ We write $^{g}\vec{\alpha}$ instead of $g\vec{\alpha}$
and it is obvious that $Nm_{\varphi}(^{g}\vec{\alpha})=$$Nm_{\varphi}%
(\vec{\alpha}).$ \noindent Clearly $\xi_{L}(g(1\otimes\alpha))=\xi_{L_{0}%
}(g\alpha)^{\otimes e}$ for any $g\in G$ and $\alpha\in L_{0}\otimes_{%
\mathbb{Q}
_{p}}E,$ and this implies that $g(\vec{\alpha}^{\otimes e})=(g\vec{\alpha
})^{\otimes e}.$ In the next proposition we determine the matrix of the Galois
action with respect to a standard basis. Recall that when the monodromy is
nontrivial, standard bases are as in the comment succeeding Corollary
\ref{simplify N}.

\begin{proposition}
\label{g commutes with frob} Let $D$ be a rank two $\left(  \varphi
,N,L/K,E\right)  $-module and let $\underline{\eta}$ be a standard basis of
$D.$

\begin{enumerate}
\item If $D$ is \textnormal{F}-semisimple and non-scalar,

\begin{enumerate}
\item If the monodromy N is nontrivial, then there exists some $E^{\times}%
$-valued character $\chi\ $of $\mathnormal{G}$ such that$\ [g]_{\underline{\eta}%
}=$ \textnormal{diag}$(\chi(g)\cdot\vec{1},\chi(g)\cdot\vec{1})\ $for all
$g\in\mathnormal{G};$

\item If the monodromy N is trivial, then\ there exist some $E^{\times}%
$-valued characters $\chi,\psi\ $of $\mathnormal{G}$ such that $[g]_{\underline
{\eta}}=$ \textnormal{diag}$(\chi(g)\cdot\vec{1},\psi(g)\cdot\vec{1})$ for all
$g\in\mathnormal{G}.$
\end{enumerate}

\item If $D$ is \textnormal{F}-scalar, then there exists some group homomorphism

\noindent$\ \ \ \ \ \ \lambda:G\rightarrow GL_{2}(E)$ such that
$[g]_{\underline{\eta}}=\lambda(g)\cdot$\textnormal{diag}$(\vec{1},\vec{1})$ for
all $g\in G.$

\item If $D$ is not \textnormal{F}-semisimple, then\ there exist some $E^{\times}%
$-valued character $\chi\ $of \noindent\ \ \ \ \ \ $\ \ \ \ \ \ \mathnormal{G}$
such that \noindent$\lbrack g]_{\underline{\eta}}=$ \textnormal{diag}%
$(\chi(g)\cdot\vec{1},\chi(g)\cdot\vec{1})$ for all $g\in G.$
\end{enumerate}
\end{proposition}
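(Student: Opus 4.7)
The plan is to translate the commutation relations $g\varphi=\varphi g$ and $gN=Ng$ into matrix equations with respect to $\underline{\eta}$, and to solve them entry-by-entry case by case. Since $g$ is semilinear via ${}^{g}$ and $\varphi$ is semilinear via $\varphi$, while $N$ is $E^{\mid\mathcal{S}_{L_{0}}\mid}$-linear, the commutation relations become
\[
[g]_{\underline{\eta}}\,{}^{g}[\varphi]_{\underline{\eta}}=[\varphi]_{\underline{\eta}}\,\varphi([g]_{\underline{\eta}})\quad\text{and}\quad[g]_{\underline{\eta}}\,{}^{g}[N]_{\underline{\eta}}=[N]_{\underline{\eta}}\,[g]_{\underline{\eta}}.
\]
In every canonical form of Proposition~\ref{new shape of phi}, the nonzero entries of $[\varphi]_{\underline{\eta}}$ and $[N]_{\underline{\eta}}$ are scalar multiples of $\vec{1}$, hence fixed by both $\varphi$ and every $g$, so the above reduce to $[g]_{\underline{\eta}}[\varphi]_{\underline{\eta}}=[\varphi]_{\underline{\eta}}\varphi([g]_{\underline{\eta}})$ and $[g]_{\underline{\eta}}[N]_{\underline{\eta}}=[N]_{\underline{\eta}}[g]_{\underline{\eta}}$. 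The main tools for the resulting entry equations are Lemma~\ref{norm lemma}(2) (for equations of type $\vec{\alpha}\vec{\gamma}=\vec{\beta}\varphi(\vec{\gamma})$) and its additive analog: $\vec{x}-\varphi(\vec{x})=C\vec{1}$ is solvable only when $fC=Tr_{\varphi}(C\vec{1})=0$, i.e.\ $C=0$, in which case $\vec{x}=x\vec{1}$ is constant.

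In case (1)(b), writing $[g]_{\underline{\eta}}=\bigl(\begin{smallmatrix}\vec{a}&\vec{b}\\\vec{c}&\vec{d}\end{smallmatrix}\bigr)$ and comparing with $[\varphi]_{\underline{\eta}}=\textnormal{diag}(\alpha\vec{1},\delta\vec{1})$ gives $\vec{a}=a(g)\vec{1}$ and $\vec{d}=d(g)\vec{1}$ from the diagonal entries, and $\vec{b}=\vec{c}=\vec{0}$ from the off-diagonal ones via Lemma~\ref{norm lemma}(2) and $\alpha^{f}\neq\delta^{f}$. The group-homomorphism identity $[g_{1}g_{2}]_{\underline{\eta}}=[g_{1}]_{\underline{\eta}}\cdot{}^{g_{1}}[g_{2}]_{\underline{\eta}}$ simplifies to $[g_{1}]_{\underline{\eta}}[g_{2}]_{\underline{\eta}}$ because constant vectors are $^{g_{1}}$-invariant, and therefore makes $\chi:=a$ and $\psi:=d$ into characters. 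Case (1)(a), where $\alpha=p\delta$ so $\alpha^{f}\neq\delta^{f}$ still holds, proceeds identically; the additional commutation with $[N]_{\underline{\eta}}=\bigl(\begin{smallmatrix}\vec{0}&\vec{0}\\\vec{1}&\vec{0}\end{smallmatrix}\bigr)$ then forces $a(g)=d(g)$. In case (2), where $[\varphi]_{\underline{\eta}}=\alpha\cdot\textnormal{diag}(\vec{1},\vec{1})$ is scalar, the $\varphi$-commutation collapses to $[g]_{\underline{\eta}}=\varphi([g]_{\underline{\eta}})$, making each entry a constant vector; writing $[g]_{\underline{\eta}}=\lambda(g)\cdot\textnormal{diag}(\vec{1},\vec{1})$ with $\lambda(g)\in GL_{2}(E)$, the same cocycle identity makes $\lambda$ a group homomorphism.

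The main obstacle lies in case (3). With $[\varphi]_{\underline{\eta}}=\bigl(\begin{smallmatrix}\alpha\vec{1}&\vec{0}\\\vec{1}&\alpha\vec{1}\end{smallmatrix}\bigr)$ and $N=0$, entry-wise expansion of the $\varphi$-commutation yields $\vec{b}=\varphi(\vec{b})$ together with three equations of type $\vec{x}-\varphi(\vec{x})=C\vec{1}$; applying the additive analog in succession forces first $\vec{b}=\vec{0}$ and $\vec{a}=a(g)\vec{1}$, then $\vec{d}=d(g)\vec{1}$, then $a(g)=d(g)$ and $\vec{c}=c(g)\vec{1}$ for some $c(g)\in E$. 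So far $[g]_{\underline{\eta}}$ is only known to be lower-triangular with equal diagonal scalars $a(g)\vec{1}$, not yet diagonal. The remaining input is the group-homomorphism identity, which now reduces to $[g_{1}g_{2}]_{\underline{\eta}}=[g_{1}]_{\underline{\eta}}[g_{2}]_{\underline{\eta}}$ and yields the cocycle relation $c(g_{1}g_{2})=c(g_{1})a(g_{2})+a(g_{1})c(g_{2})$; an easy induction gives $c(g^{n})=n\,a(g)^{n-1}c(g)$, and since $G$ is finite and $E$ has characteristic zero, taking $n$ to be the order of $g$ forces $c(g)=0$. Setting $\chi(g):=a(g)$ yields the stated diagonal form.
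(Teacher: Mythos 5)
Your proof is correct and follows essentially the same route as the paper's: translate commutation with $\varphi$ and $N$ into matrix equations, note the entries of $[\varphi]_{\underline{\eta}}$ and $[N]_{\underline{\eta}}$ are constant vectors (hence $\varphi$- and $G$-invariant), solve entry-by-entry using Lemma~\ref{norm lemma}, and close the non-\textnormal{F}-semisimple case with the cocycle relation $\gamma(g_1g_2)=\alpha(g_1)\gamma(g_2)+\alpha(g_2)\gamma(g_1)$ and the finite-order argument. The only cosmetic difference is that you package the recurring equation $\vec{x}-\varphi(\vec{x})=C\vec{1}$ as an ``additive analog'' of Lemma~\ref{norm lemma}(2) (solvable iff $C=0$, in which case $\vec{x}$ is constant), which streamlines the computation the paper carries out directly; the underlying argument is identical.
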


\begin{proof}
For $G$ to act on $D$ we must have $[g_{1}g_{2}]_{\underline{\eta}}%
=[g_{1}]_{\underline{\eta}}\left(  ^{g_{1}}[g_{2}]_{\underline{\eta}}\right)
$ for any $g_{1},g_{2}\in G.$ We determine the shape of the matrices
$[g]_{\underline{\eta}}$ utilizing the fact that the Galois action commutes
with Frobenius and the monodromy operators.\ That happens if and only if
$[\varphi]_{\underline{\eta}}\varphi\left(  \lbrack g]_{\underline{\eta}%
}\right)  =[g]_{\underline{\eta}}(^{g}[\varphi]_{\underline{\eta}})$ and
\noindent$\lbrack N]_{\underline{\eta}}[g]_{\underline{\eta}}=[g]_{\underline
{\eta}}\left(  ^{g}[N]_{\underline{\eta}}\right)  $ for all $g\in G.$ The
proof of the proposition is a tedious calculation and we only give the details
in Case (3). For any $g,$ we write $[g]_{\underline{\eta}}=\left(
\begin{array}
[c]{cc}%
\vec{\alpha}(g) & \vec{\beta}(g)\\
\vec{\gamma}(g) & \vec{\delta}(g)
\end{array}
\right)  .$ In this case the monodromy operator is trivial. Let $[\varphi
]_{\underline{\eta}}=\left(
\begin{array}
[c]{cc}%
\alpha\cdot\vec{1} & \vec{0}\\
\vec{1} & \alpha\cdot\vec{1}%
\end{array}
\right)  $ for some $\alpha\in E^{\times}.$ The equation $[\varphi
]_{\underline{\eta}}\varphi\left(  \lbrack g]_{\underline{\eta}}\right)
=[g]_{\underline{\eta}}(^{g}[\varphi]_{\underline{\eta}})$ implies that for
all $g\in G,$ $[g]_{\underline{\eta}}=\left(
\begin{array}
[c]{cc}%
\alpha(g)\cdot\vec{1} & \vec{0}\\
\gamma(g)\cdot\vec{1} & \alpha(g)\cdot\vec{1}%
\end{array}
\right)  $ for some functions $\alpha,\gamma:G\rightarrow E.$ The equation
$[g_{1}g_{2}]_{\underline{\eta}}=[g_{1}]_{\underline{\eta}}\left(  ^{g_{1}%
}[g_{2}]_{\underline{\eta}}\right)  $ implies that $\alpha:G\rightarrow
E^{\times}$ is a character, and that $\gamma(g_{1}g_{2})=\alpha(g_{1}%
)\gamma(g_{2})+\alpha(g_{2})\gamma(g_{1})$ for all $g_{1}\ $and $g_{2}.$ By
induction, $\gamma(g^{n})=n\alpha(g^{n-1})\gamma(g)\ $for any $g\in G$ and any
non negative integer $n.$ Since $\gamma(1)=0$ and $\alpha(g)\neq0$ for all
$g,$ we have $\gamma(g)=0$ because $G$ is finite.
\end{proof}

\section{Galois-stable filtrations\label{stable filtrations}}

In this section we describe the shape of the filtrations of rank two filtered
modules and construct those which are stable under the Galois action. The
notion of a labeled Hodge-Tate weight will be important.

\subsection{Labeled Hodge-Tate weights}

If $D$ is a rank $n$ filtered $\left(  \varphi,N,L/K,E\right)  $-module,
$D_{L}=L\otimes_{L_{0}}D$ may be viewed as a module over $E^{\mid
\mathcal{S}_{L}\mid}$ via the ring isomorphism $\xi_{L}\ $of Section
$\ref{action F}$. For each embedding $\sigma$ of $L$ into $E,$ let $e_{\sigma
}:=(0,...,0,1_{\sigma},0,...,0)\in E^{\mid\mathcal{S}_{L}\mid}$ and
$D_{L,\sigma}:=e_{\sigma}D_{L}.$ We have the decomposition
\[
D_{L}=\bigoplus\limits_{\sigma\in S_{L}}D_{L,\sigma}.
\]
Since $D_{L}$ is free of rank $n$ over $L\otimes_{%
\mathbb{Q}
_{p}}E,$ the components $D_{L,\sigma}$ are equidimensional over $E,$ each of
dimension $n.$ We remark that the $E^{\mid\mathcal{S}_{L}\mid}$%
-modules$\ e_{\sigma}D_{L}$ are not necessarily free. We filter each component
$D_{L,\sigma}=e_{\sigma}D_{L}$ be setting Fil$^{j}D_{L,\sigma}:=e_{\sigma}
$Fil$^{j}D_{L}.$ An integer $j$ is called a labeled Hodge-Tate weight of
$D_{L}$ (or of $D$) with respect to the embedding $\sigma$ if and only if
Fil$^{-j}D_{L,\sigma}\neq$ Fil$^{-j+1}D_{L,\sigma}.$ It is counted with
multiplicity dim$_{E}\left(  \text{Fil}^{-j}D_{L,\sigma}/\text{Fil}%
^{-j+1}D_{L,\sigma}\right)  .$ Since the components $D_{L,\sigma}$ are
equidimensional over $E,$ there are $n$ labeled Hodge-Tate weights for each
embedding $\sigma,$ counting multiplicities. The labeled Hodge-Tate weights of
$D$ are by definition the $m$-tuple of multiset $(W_{i})_{\sigma_{i}},$ where
each such multiset $W_{i}$ contains $n$ integers, the opposites of the jumps
of the filtration of $D_{L,\sigma_{i}}.$ From now on we restrict attention to
rank two filtered modules with labeled Hodge-Tate weights $(\{0,-k_{i}%
\})_{\sigma_{i}},$ with $k_{i}$ non negative integers. When the labeled
Hodge-Tate weights are arbitrary, we can always shift them into this range,
after twisting by some appropriate rank one weakly admissible filtered
$\varphi$-module. Indeed, since Fil$^{j}\left(  D_{1}\otimes D_{2}\right)
=\sum\limits_{j_{1}+j_{2}=j}$Fil$^{j_{1}}D_{1}\otimes$Fil$^{j_{2}}D_{2}\ $ for
any filtered modules $D_{1}$ and $D_{2}$ and any integer $j,$ the claim
follows easily using the shape of the rank-one weakly admissible filtered
$\varphi$-modules given in the Appendix and the definition of a labeled
Hodge-Tate weight.

\begin{notation}
{\label{notation for filtrations}\noindent\ Let $k_{0},k_{1},...,k_{m-1}$ be
non negative integers which we call weights. Assume that after ordering them
and omitting possibly repeated weights we get $w_{0}<w_{1}<...<w_{t-1},$ where
$w_{0}$ is the smallest weight, \noindent$w_{1}$ the second smallest
weight,\ ..., $w_{t-1}$ is the largest weight and $1\leq t\leq m.$ }For
convenience we define $w_{-1}=0.${\ Let $I_{0}=\{0,1,...,m-1\},$ $I_{1}=\{i\in
I_{0}:k_{i}>w_{0}\},...,\ I_{t-1}=\{i\in I_{0}:k_{i}>w_{t-2}\}=\{i\in
I_{0}:k_{i}=w_{t-1}\},$ $I_{t}=\varnothing$ and $I_{0}^{+}=\{i\in I_{0}%
:k_{i}>0\}.$ Notice that $\sum\limits_{i=0}^{t-1}w_{i}(\mid I_{i}\mid-\mid
I_{i+1}\mid)=\sum\limits_{i=0}^{m-1}k_{i}.$ If $\vec{x}\in E^{\mid
\mathcal{S}_{L}\mid},$ we write $J_{\vec{x}}=\{i\in I_{0}:x_{i}\neq0\}.$ For
any $J\subset I_{0},$ we let $f_{J}:=\sum\limits_{i\in J}e_{\sigma_{i}}.$ If
}$A$ is a matrix with entries in{\ $E^{\mid\mathcal{S}_{L_{0}}\mid}$ }we write
$A^{\otimes e}${\ for the matrix with entries in $\prod
\limits_{\ \ \mathcal{S}_{L}}E$ obtained by replacing each entry $\vec{\alpha
}$ of }$A$ by $\vec{a}^{\otimes e},$ where $\vec{a}^{\otimes e}$ is as in
Section $\ref{action F}.$
\end{notation}

\subsection{The shape of the filtrations}

Let $D_{L}$ be a filtered $\varphi$-module with labeled Hodge-Tate weights
$(\{-k_{i},0\})_{\sigma_{i}}$ and let $\underline{\eta}=(\eta_{1},\eta_{2})$
be any ordered basis of $D\ $over $E^{\mid\mathcal{S}_{L_{0}}\mid}.$ By the
definition of a labeled Hodge-Tate weight we have
\[
\text{Fil}^{j}(D_{L,\sigma_{i}})=\left\{
\begin{array}
[c]{l}%
e_{\sigma_{i}}D_{L}\text{ ~\ \ if }j\leq0,\\
\ \ D_{L}^{i}\text{ ~~\ \ \ \ ~~~if }1\leq j\leq k_{i},\\
\ \ \ 0\text{ ~~~~~~~\ \ \ \ \ \ if }j\geq1+k_{i},
\end{array}
\right.
\]
where $D_{L}^{i}=(E^{\mid\mathcal{S}_{L}\mid})\left(  \vec{x}^{i}(1\otimes
\eta_{1})+\vec{y}^{i}(1\otimes\eta_{2})\right)  e_{_{\sigma_{i}}},$ for some
vectors $\vec{x}^{i}=(x_{0}^{i},x_{1}^{i},...,$

\noindent$x_{m-1}^{i})$ and \ $\vec{y}^{i}=(y_{0}^{i},y_{1}^{i},...,y_{m-1}%
^{i})$ $\in E^{\mid\mathcal{S}_{L}\mid},$ with the additional condition that
$(x_{i}^{i},y_{i}$$^{i})\not =(0,0)$ whenever $k_{i}>0.$ Since one may choose
the $x_{i}^{i}$ and $y_{i}^{i}$ arbitrarily when $k_{i}=0,$ we may assume that
$(x_{i}^{i},y_{i}^{i})\not =(0,0)$ for all $i\in I_{0}.$ From now on we always
make this assumption. Since Fil$^{j}(D_{L})=\bigoplus\limits_{i=0}%
^{m-1}e_{\sigma_{i}} $Fil$^{j}(D_{L}),$ we have Fil$^{j}D_{L}=D_{L}$ for
$j\leq0$ and Fil$^{j}D_{L}=0$ for $j\geq1+w_{t-1}.$ Let $1+w_{r-1}\leq j\leq
w_{r}$ for some $r\in\{0,1,...,t-1\}$ (recall that $w_{-1}=0$), then
Fil$^{j}D_{L}=\bigoplus\limits_{i\in I_{r}}D_{L}^{i}.$~If $\vec{x}=(x_{0}%
^{0},x_{1}^{1},...,x_{m-1}^{m-1})$ and $\vec{y}=(y_{0}^{0},y_{1}%
^{1},...,y_{m-1}^{m-1}),$ then $(x_{i}^{i},y_{i}^{i})\not =(0,0)$ for all
$i\in I_{0}$ and%

\[
\text{Fil}^{j}(D_{L})=\left\{
\begin{array}
[c]{l}%
\ \ D_{L}\ \ \ \text{if\ \ \ }j\,\leq0,\\
(E^{\mid\mathcal{S}_{L}\mid})f_{I_{0}}\left(  \vec{x}(1\otimes\eta_{1}%
)+\vec{y}(1\otimes\eta_{2})\right)  \text{\ \ \ \ if\ \ }1\leq j\leq w_{0},\\
(E^{\mid\mathcal{S}_{L}\mid})f_{I_{1}}\left(  \vec{x}(1\otimes\eta_{1}%
)+\vec{y}(1\otimes\eta_{2})\right)  \ \ \ \ \text{if\ \ }1+w_{0}\leq j\leq
w_{1},\\
\ \ \ \ \ \ \ \ \ \ \ \ \ \ \ \ \ \ \ \ \ \ \ \ \cdots\cdots\cdots\\
(E^{\mid\mathcal{S}_{L}\mid})f_{I_{t-1}}\left(  \vec{x}(1\otimes\eta_{1}%
)+\vec{y}(1\otimes\eta_{2})\right)  \ \text{ if\ \ }1+w_{t-2}\leq j\leq
w_{t-1},\\
\ \ \ 0\ \ \ \ \ \text{if\ \ }j\geq1+w_{t-1}.
\end{array}
\right.
\]

\begin{remark}
{\label{filtration}The filtration of $D_{L}$ can be put into the shape above
(for appropriate vectors $\vec{x}$ and $\vec{y}$) with respect to any ordered
basis of $D_{L}.$ Two filtrations of }$D_{L}${\ are called equivalent if one
is obtained from the other by replacing }$\vec{x}$ by $\vec{t}\cdot\vec{x}$
and $\vec{y}$ by $\vec{t}\cdot\vec{y},$ for some $\vec{t}\in\left(  E^{\times
}\right)  ^{\mid\mathcal{S}_{L}\mid}.$ Filtrations will be considered up to
equivalence and {one may assume that $\vec{y}=$ $f_{J_{\vec{y}}}.$ If
$\underline{\eta}=(\eta_{1},\eta_{2})$ is a standard basis of $D,\ $the
filtration of $D_{L}$ will be considered with respect to the basis
$1\otimes\underline{\eta}=(1\otimes\eta_{1},1\otimes\eta_{2}).$ We denote
$E^{\mid S_{L}\mid_{J}}:=(E^{\mid\mathcal{S}_{L}\mid})\cdot f_{J},$ for any
$J\subset I_{0}.$ }
\end{remark}

\subsection{Galois-stable filtrations in the non-\textnormal{F}-scalar case
\label{stable filtr}}

We now assume that $D$ is not \textnormal{F}-scalar and we construct the
filtrations of $D_{L}$ which are stable under the action of $G=\text{Gal}%
(L/K).$ We define a right action of $G$ on $I_{0}$ by letting$\ \ i\cdot
g:=\pi(g)(i),\ $where $\pi$ is as in Section \ref{action F}. Each orbit$\ $has
cardinality equal to $\#G,$ hence there are $\nu:=\left[  K:%
\mathbb{Q}
_{p}\right]  \ $orbits which we denote by $\mathcal{O}_{1},\mathcal{O}%
_{2},...,\mathcal{O}_{\nu}.$ Since the homomorphism $\rho$ of Section
\ref{action F} is injective, the $G$-action on $I_{0}\ $is free. \noindent Let
$[g]_{\underline{\eta}}=(\chi(g)\cdot\vec{1},\psi(g)\cdot\vec{1})\ $ \noindent
with the characters $\chi\ $and $\psi$ as in Proposition
\ref{g commutes with frob}, and let the filtration of $D_{L}$ be%

\begin{equation}
\text{Fil}^{j}(D_{L})=\left\{
\begin{array}
[c]{l}%
\ \ \ \ \ \ \ \ \ \ \ \ D_{L}\ \ \ \ \ \ \ \ \ \ \text{if\ \ \ }j\,\leq0,\\
\left(  E^{\mid S_{L}\mid_{I_{r}}}\right)  \left(  \vec{x}(1\otimes\eta
_{1})+\vec{y}(1\otimes\eta_{2})\right)  \text{\ if}\\
1+w_{r-1}\leq j\leq w_{r},\ \text{for}\ r=0,...,t-1,\\
\ \ \ \ \ \ \ \ \ \ \ \ \ 0\ \ \text{\ \ \ \ \ \ \ \ \ \ if \ \ }%
j\geq1+w_{t-1},
\end{array}
\right.  \label{filtrations}%
\end{equation}
for some vectors $\vec{x},\vec{y}\in E^{\mid\mathcal{S}_{L}\mid}$ with
$(x_{i},y_{i})\neq$ $(0,0)$ for all $i\in I_{0}.$ We must have that
$g($Fil$^{j}D_{L})\subset$ Fil$^{j}D_{L}$ for any $g\in G$ and $j\in%
\mathbb{Z}
.$ For any $r\in\{0,1,...,t-1\}$ there must exist some vector $\vec{t}=\vec
{t}(r,g)\in E^{\mid\mathcal{S}_{L}\mid}$ such that the following equations
hold:$\ $%
\begin{equation}
\chi(g)(^{g}f_{I_{r}\cap J_{\vec{x}}})\cdot(^{g}\vec{x})=\vec{t}\cdot
f_{I_{r}\cap J_{\vec{x}}}\cdot\vec{x}\ \text{and\ }\psi(g)(^{g}f_{I_{r}\cap
J_{\vec{y}}})\cdot^{g}\vec{y}=\vec{t}\cdot f_{I_{r}\cap J_{\vec{y}}}\cdot
\vec{y}. \label{g-stable equations}%
\end{equation}

\begin{notation}
If $g\in G$ and $J\subset I_{0}$ we denote by $^{g}J$ the set $\{j\cdot
g,\ j\in J\}.$
\end{notation}

\noindent\noindent For any $J,J_{1},J_{2}\subset I_{0},\ $any $g\in G\ $and
any $\vec{x}\in E^{\mid S_{L}\mid}\ $the following equations are trivial to
check:%
\begin{equation}%
\begin{array}
[c]{c}%
f_{J_{1}}\cdot f_{J_{2}}=f_{J_{1}\cap J_{2}},\ ^{g}(f_{I})=f_{(^{g}I)}%
,\ (^{g}f_{J_{1}})\cdot f_{J_{2}}=f_{(^{g}J_{1})\cap J_{2}},^{g}J_{\vec{x}%
}=J_{^{g}\vec{x}}\\
\text{and\ }^{g}(J_{1}\cap J_{2})=(^{g}J_{1})\cap(^{g}J_{2}).
\end{array}
\label{J^g}%
\end{equation}
Since $\chi(g)\neq0$ for all $g,\ $the equation $\chi(g)(^{g}f_{I_{r}\cap
J_{\vec{x}}})\cdot(^{g}\vec{x})=\vec{t}\cdot f_{I_{r}\cap J_{\vec{x}}}%
\cdot\vec{x}$ implies $^{g}(I_{r}\cap J_{\vec{x}})\cap J_{^{g}\vec{x}}\subset
I_{r}\cap J_{\vec{x}}.$ This is equivalent to $^{g}(I_{r}\cap J_{\vec{x}%
})\subset I_{r}\cap J_{\vec{x}}$ and therefore to $^{g}(I_{r}\cap J_{\vec{x}%
})=I_{r}\cap J_{\vec{x}}\ $for all $g\in G.$ Similarly, $^{g}(I_{r}\cap
J_{\vec{y}})=I_{r}\cap J_{\vec{y}}$ for all $g\in G.\ $The latter (for
$r=0\ $combined with Formulae (\ref{J^g})) imply that the sets $J_{\vec{x}}$
and $J_{\vec{y}}$ are $G$-stable and therefore unions of $G$-orbits of
$I_{0}.$ Since $J_{\vec{x}}\cup J_{\vec{y}}=I_{0},$ each set $I_{r}\ $is
$G$-stable and therefore a union of $G$-orbits as well. \noindent For a fixed
$g,$ equations (\ref{g-stable equations}) hold for any $r=0,1,...,t-1$ if and
only if they hold for $r=0,$ they are therefore equivalent to the existence of
some vector $\vec{t}=\vec{t}(g)\in E^{\mid\mathcal{S}_{L}\mid}$ such that
\[
\left(  x_{\pi(g)(i_{j})},y_{\pi(g)(i_{j})}\right)  =\left(  t(g)_{i_{j}}\cdot
x_{i_{j}},t(g)_{i_{j}}\cdot y_{i_{j}}\right)  \cdot\text{\textnormal{diag}}\left(
\chi(g)^{-1},\psi(g)^{-1}\right)  \ \text{for\ all}\ g\in G.
\]
Since $J_{\vec{x}}\cup J_{\vec{y}}=I_{0}$ all the coordinates of $\vec{t}(g)$
are non zero and by Remark \ref{filtration} we may assume that\ $\vec
{t}(g)=\vec{1}$ for all $g\in G.$ Let $i_{j}$ be any index in the orbit
$\mathcal{O}_{j},$ with $1\leq j\leq\nu,$ and let $(x_{i_{j}},y_{i_{j}})\in
E\times E$ with $(x_{i_{j}},y_{i_{j}})\neq(0,0).$ Since $G$ acts freely on
$I_{0},$ for each index $\ell\in I_{0}$ there exist unique $j\in
\{1,2,...,\nu\}$ and $g\in G$ such that $\ell=i_{j}\cdot g.$ Let $\vec{x}%
,\vec{y}\in E^{\mid\mathcal{S}_{L}\mid}$ be the vectors with coordinates
$\left(  x_{\ell},y_{\ell}\right)  :=\left(  x_{i_{j}},y_{i_{j}}\right)
\cdot$diag$\left(  \chi(g)^{-1},\psi(g)^{-1}\right)  $ for all $g\in G.$
Clearly
\[
\vec{x}=\sum\limits_{j=1}^{\nu}\left\{  \sum\limits_{g\in G}x_{i_{j}}\cdot
\chi(g^{-1})\cdot e_{\pi(g)(i_{j})}\right\}  \ \text{and}\ \vec{y}%
=\sum\limits_{j=1}^{\nu}\left\{  \sum\limits_{g\in G}y_{i_{j}}\cdot\psi
(g^{-1})\cdot e_{\pi(g)(i_{j})}\right\}  .
\]
By the discussion above we have the following proposition.

\begin{proposition}
The filtration in $\left(  \ref{filtrations}\right)  $ with vectors $\vec{x}$
and $\vec{y}$ as above is $G$-stable if and only if the sets $I_{r}\ $are
unions of $G$-orbits of $I_{0}$ for all $1\leq r\leq t-1.$ Conversely, any $G
$-stable filtration of $D_{L}$ is equivalent to a filtration of this form.
\end{proposition}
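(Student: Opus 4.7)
The plan is to verify that the argument preceding the proposition essentially proves both directions, and then to organize it cleanly.

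First, for the forward implication. A filtration of the form (\ref{filtrations}) is $G$-stable if and only if $g(\text{Fil}^{j}D_{L})\subset\text{Fil}^{j}D_{L}$ for every $g\in G$ and every $j$ in the ranges $1+w_{r-1}\leq j\leq w_{r}$ with $0\leq r\leq t-1$. Acting by $g$ on the generator $f_{I_{r}}(\vec{x}(1\otimes\eta_{1})+\vec{y}(1\otimes\eta_{2}))$ and using Proposition \ref{g commutes with frob}, this stability is precisely the existence, for each $(r,g)$, of some $\vec{t}=\vec{t}(r,g)\in E^{\mid\mathcal{S}_{L}\mid}$ satisfying equations (\ref{g-stable equations}). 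Comparing the supports of both sides of the $\vec{x}$-equation and using Formulas (\ref{J^g}), I get $^{g}(I_{r}\cap J_{\vec{x}})=I_{r}\cap J_{\vec{x}}$, and similarly for $\vec{y}$. Taking $r=0$ yields that both $J_{\vec{x}}$ and $J_{\vec{y}}$ are $G$-stable, hence (since $J_{\vec{x}}\cup J_{\vec{y}}=I_{0}$) each $I_{r}$ is $G$-stable, and because the $G$-action on $I_{0}$ is free, $I_{r}$ is a union of $G$-orbits.

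Next, for the converse and the normalization. Assuming each $I_{r}$ is a union of orbits, fix a transversal $\{i_{1},\dots,i_{\nu}\}$ for the orbits $\mathcal{O}_{1},\dots,\mathcal{O}_{\nu}$ and arbitrary initial data $(x_{i_{j}},y_{i_{j}})\in E\times E$ with $(x_{i_{j}},y_{i_{j}})\neq(0,0)$. Define $\vec{x},\vec{y}$ by propagating along each orbit via
\[
(x_{\ell},y_{\ell}):=(x_{i_{j}},y_{i_{j}})\cdot\text{diag}(\chi(g)^{-1},\psi(g)^{-1}),\qquad\ell=i_{j}\cdot g.
\]
A direct substitution shows that (\ref{g-stable equations}) is satisfied for all $r$ and $g$ with $\vec{t}(r,g)=\vec{1}$, so the resulting filtration is $G$-stable.

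Finally, to see that every $G$-stable filtration of $D_{L}$ is equivalent to one of this form, start from any $G$-stable filtration as in (\ref{filtrations}). By the forward argument, the sets $I_{r}$ are unions of $G$-orbits and there exist vectors $\vec{t}(g)\in E^{\mid\mathcal{S}_{L}\mid}$ realizing (\ref{g-stable equations}). Since $J_{\vec{x}}\cup J_{\vec{y}}=I_{0}$, all coordinates of each $\vec{t}(g)$ are nonzero, and Remark \ref{filtration} allows me to rescale $\vec{x}$ and $\vec{y}$ simultaneously by an element of $(E^{\times})^{\mid\mathcal{S}_{L}\mid}$; applying this rescaling I may normalize so that $\vec{t}(g)=\vec{1}$ for every $g\in G$. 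The resulting $\vec{x}$ and $\vec{y}$ are then determined by their values on orbit representatives via the propagation formula above, which is the claimed form. The main bookkeeping obstacle is the passage from the relations $^{g}(I_{r}\cap J_{\vec{x}})=I_{r}\cap J_{\vec{x}}$ and $^{g}(I_{r}\cap J_{\vec{y}})=I_{r}\cap J_{\vec{y}}$ to the $G$-stability of each $I_{r}$, which is where the hypothesis $J_{\vec{x}}\cup J_{\vec{y}}=I_{0}$ is crucially used.
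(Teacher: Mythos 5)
Your proof follows the paper's own argument almost verbatim: you derive $^{g}(I_{r}\cap J_{\vec{x}})=I_{r}\cap J_{\vec{x}}$ from the support comparison in (\ref{g-stable equations}), use $J_{\vec{x}}\cup J_{\vec{y}}=I_{0}$ to conclude each $I_{r}$ is $G$-stable, propagate along orbits via $\chi,\psi$ for the converse, and invoke Remark~\ref{filtration} to normalize $\vec{t}(g)=\vec{1}$. Two small remarks: freeness of the $G$-action on $I_{0}$ is not what makes a $G$-stable subset a union of orbits (that is automatic); and you tacitly use, as the paper does explicitly, the reduction that once each $I_{r}$ is known to be a union of orbits, the system (\ref{g-stable equations}) for all $r$ is equivalent to the $r=0$ case, so that a single $\vec{t}(g)$ suffices --- neither point affects the correctness of the argument.
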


\begin{example}
{Let $K=%
\mathbb{Q}
_{p}$ and let $L\ $be any finite Galois extension of} $%
\mathbb{Q}
_{p}.$ The action of $G$ on $I_{0}$ is free and transitive. Since the sets
$I_{r}$ are unions of $G$-orbits, $I_{r}=\varnothing$ for all $r\geq1$ and all
the labeled Hodge-Tate weights are equal to some non negative integer $k. $
Since the sets $J_{\vec{x}}$ and $J_{\vec{y}}$ are unions of $G$-orbits, the
only possibilities are $(J_{\vec{x}},J_{\vec{y}})=(\varnothing,I_{0}),$
$(I_{0},\varnothing),$ $(I_{0},I_{0}).$ The only $G$-stable filtrations (up to
equivalence) are
\[
\text{\textnormal{Fil}}^{j}(D_{L})=\left\{
\begin{array}
[c]{l}%
\ \ D_{L}\ \ \ \ \ \text{if }j\leq0,\\
(E^{\mid\mathcal{S}_{L}\mid})\left(  \vec{x}(1\otimes\eta_{1})\ +\vec
{y}(1\otimes\eta_{2})\right)  \ \text{if }1\leq j\leq k,\\
\ \ 0\ \ \ \ \ \ \ \ \text{if }j\geq1+k,
\end{array}
\right.
\]
with $(\vec{x},\vec{y})=(\vec{0},\vec{1})$ if $(J_{\vec{x}},J_{\vec{y}%
})=(\varnothing,I_{0}),$ $(\vec{x},\vec{y})=(\vec{1},\vec{0})$ if $(J_{\vec
{x}},J_{\vec{y}})=(I_{0},\varnothing)\ $and
\[
\noindent(\vec{x},\vec{y})=\left(  x_{0}\left(  1,\frac{\psi(\sigma)}%
{\chi(\sigma)},\left(  \frac{\psi(\sigma)}{\chi(\sigma)}\right)
^{2},...,\left(  \frac{\psi(\sigma)}{\chi(\sigma)}\right)  ^{m-1}\right)
,\ \vec{1}\right)
\]
for any $x_{0}\in E^{\times},$ if $(J_{\vec{x}},J_{\vec{y}})=(I_{0},I_{0}).$
\end{example}

\subsection{Galois-stable filtrations in the \textnormal{F}-scalar case}

Let $\lambda$ be the homomorphism of Proposition \ref{g commutes with frob}
and let $\lambda(g)=\left(
\begin{array}
[c]{cc}%
\alpha(g) & \beta(g)\\
\gamma(g) & \delta(g)
\end{array}
\right)  .$ The Galois action preserves the filtration if and only if for any
$g\in G$ and any $0\leq r\leq t-1,$ there exists some vector $\vec{t}=\vec
{t}(g,r)\in E^{\mid\mathcal{S}_{L}\mid}$ such that%
\[%
\begin{array}
[c]{c}%
^{g}f_{I_{r}}\left\{  \alpha(g)\cdot\left(  ^{g}\vec{x}\right)  +\beta
(g)\cdot\left(  ^{g}\vec{y}\right)  \right\}  =\vec{t}\cdot\vec{x}\cdot
f_{I_{r}},\\
^{g}f_{I_{r}}\left\{  \gamma(g)\cdot\left(  ^{g}\vec{x}\right)  +\delta
(g)\cdot\left(  ^{g}\vec{y}\right)  \right\}  =\vec{t}\cdot\vec{y}\cdot
f_{I_{r}}.
\end{array}
\]
Suppose that there exists some $i\in\ ^{g}I_{r}$ with $i\not \in I_{r}.$ Then
$(x_{\pi(g)(i)},y_{\pi(g)(i)})\cdot\lambda(g)=(0,0),$ and since \textnormal{det}%
$\lambda(g)\neq0$ we have $(x_{\pi(g)(i)},y_{\pi(g)(i)})=(0,0)$ a
contradiction. Therefore $^{g}I_{r}=I_{r}$ for all $g.$ Then $g\left(
\text{Fil}^{i}D_{L}\right)  \subset$ Fil$^{j}D_{L}$ if and only if there
exists some vector $\vec{t}=\vec{t}(g,0)\in E^{\mid\mathcal{S}_{L}\mid}$ such
that $\left(  ^{g}\vec{x},^{g}\vec{y}\right)  =\left(  \vec{t}\cdot\vec
{x},\vec{t}\cdot\vec{y}\right)  \left(  \lambda(g^{-1})\cdot\text{diag}\left(
\vec{1},\vec{1}\right)  \right)  .$ This is equivalent to $\left(
x_{\pi(g)(i_{j})},y_{\pi(g)(i_{j})}\right)  =\left(  t(g)_{i_{j}}\cdot
x_{i_{j}},t(g)_{i_{j}}\cdot y_{i_{j}}\right)  \cdot\lambda(g^{-1})$ for all
$g\in G.$ Arguing as in Section \ref{stable filtr} one sees that $\vec
{t}(g,0)\in\left(  E^{\times}\right)  ^{\mid\mathcal{S}_{L}\mid}$ for all $g.$
By Remark \ref{filtration} we may assume that $\vec{t}(g)=\vec{1}$ for all
$g\in G.$ Let $i_{j}$ be any index in the orbit $\mathcal{O}_{j},$ with $1\leq
j\leq\nu,$ and let $(x_{i_{j}},y_{i_{j}})\in E\times E$ with $(x_{i_{j}%
},y_{i_{j}})\neq(0,0).$ Since $G$ acts freely on $I_{0},$ for each index
$\ell\in I_{0}$ there exist unique $j\in\{1,2,...,\nu\}$ and $g\in G $ such
that $\ell=i_{j}\cdot g.$ Let $\vec{x},\vec{y}\in E^{\mid\mathcal{S}_{L}\mid}$
be the vectors with coordinates $\left(  x_{\ell},y_{\ell}\right)  :=\left(
x_{i_{j}},y_{i_{j}}\right)  \cdot\lambda(g^{-1})$ for all $g\in G.$ Clearly
\[
\vec{x}=\sum\limits_{j=1}^{\nu}\left\{  \sum\limits_{g\in G}x_{\pi(g)(i_{j}%
)}\cdot e_{\pi(g)(i_{j})}\right\}  \ \text{and}\ \vec{y}=\sum\limits_{j=1}%
^{\nu}\left\{  \sum\limits_{g\in G}y_{\pi(g)(i_{j})}\cdot e_{\pi(g)(i_{j}%
)}\right\}  .
\]
By the discussion above we have the following proposition.

\begin{proposition}
The filtration in $\left(  \ref{filtrations}\right)  $ with vectors $\vec{x}$
and $\vec{y}$ as above is $G$-stable if and only if the sets $I_{r}$ are
unions of $G$-orbits of $I_{0}$ for all $1\leq r\leq t-1.$ Conversely, any $G
$-stable filtration of $D_{L}$ is equivalent to a filtration of this form.
\end{proposition}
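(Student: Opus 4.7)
The plan is to translate $G$-stability of the filtration into explicit algebraic conditions on the vectors $\vec{x}$ and $\vec{y}$, and then to solve those conditions in two stages: first determining which of the index sets $I_r$ are admissible, and then parameterizing the vectors by orbit representatives.

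First I would write out the condition $g(\mathrm{Fil}^{j} D_L) \subset \mathrm{Fil}^{j} D_L$ for each relevant $j$ using the shape of the filtration in $(\ref{filtrations})$ and the matrix $[g]_{\underline{\eta}} = \lambda(g) \cdot \mathrm{diag}(\vec{1}, \vec{1})$ coming from Proposition \ref{g commutes with frob}(2). Because the $j$-th step is generated over $E^{\mid S_L\mid_{I_r}}$ by $f_{I_r}(\vec{x}(1 \otimes \eta_1) + \vec{y}(1 \otimes \eta_2))$, $G$-stability is equivalent to the existence, for each $g \in G$ and each $r \in \{0, 1, \ldots, t-1\}$, of a scaling vector $\vec{t}(g,r) \in E^{\mid \mathcal{S}_L\mid}$ satisfying the two displayed equations relating $(^g\vec{x}, {}^g\vec{y})$ to $(\vec{x}, \vec{y})$ through the idempotent $f_{I_r}$ and the matrix $\lambda(g)$.

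Next I would deduce that each $I_r$ is a union of $G$-orbits. If there were some $i \in {}^g I_r \setminus I_r$, the $i$-th coordinate of the right-hand side would vanish, while on the left it equals $\lambda(g)$ applied to the pair $(x_{\pi(g)(i)}, y_{\pi(g)(i)})$; since $\det \lambda(g) \neq 0$, this would force $(x_{\pi(g)(i)}, y_{\pi(g)(i)}) = (0,0)$, contradicting the standing assumption that $(x_i, y_i) \neq (0,0)$ for all $i \in I_0$. With all the $I_r$ thereby $G$-stable, the system for general $r$ collapses to the $r = 0$ case. The same nondegeneracy argument, together with $J_{\vec{x}} \cup J_{\vec{y}} = I_0$, forces $\vec{t}(g,0) \in (E^{\times})^{\mid\mathcal{S}_L\mid}$, so by Remark \ref{filtration} we may rescale to $\vec{t}(g) = \vec{1}$, reducing the problem to the coordinate-wise identity $(x_{\pi(g)(i)}, y_{\pi(g)(i)}) = (x_i, y_i) \cdot \lambda(g^{-1})$.

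Finally, I would use that $G$ acts freely on $I_0$ (from the injectivity of $\rho$ in Section \ref{action F}): choosing a representative $i_j \in \mathcal{O}_j$ for each orbit and any $(x_{i_j}, y_{i_j}) \in E^2 \setminus \{(0,0)\}$, the coordinate-wise identity unambiguously defines $(x_\ell, y_\ell)$ for every $\ell \in I_0$. One checks directly that the resulting vectors satisfy the equation for all $g$, using the twisted homomorphism property of $\lambda$, which yields the claimed parameterization. The converse inclusion is then built into the construction. The main obstacle I anticipate is the nonvanishing step: verifying that $\vec{t}(g,0)$ has no zero coordinate requires the combination of $(x_i, y_i) \neq (0,0)$ everywhere with $J_{\vec{x}} \cup J_{\vec{y}} = I_0$, and only after this can Remark \ref{filtration} be invoked to normalize $\vec{t}$ away and make the orbit-parameterization step work without consistency obstructions.
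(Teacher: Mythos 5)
Your proposal reproduces the paper's argument for the \textnormal{F}-scalar case almost step for step: translating $G$-stability into the two scaling equations involving $f_{I_r}$ and $\lambda(g)$, using $\det\lambda(g)\neq 0$ together with $(x_i,y_i)\neq(0,0)$ to force $^gI_r=I_r$, collapsing to $r=0$, normalizing $\vec{t}(g)=\vec{1}$ via Remark \ref{filtration}, and then parameterizing by a representative per $G$-orbit using the free action. This is essentially the same approach the paper takes.
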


\section{Hodge and Newton invariants\label{the fixed submodules}}

In this section we compute Hodge and Newton invariants of rank two filtered
$\varphi$-modules $\left(  D,\varphi\right)  .$ We thank the referee for
pointing out a mistake in the computation of Newton invariants. The same
mistake had been pointed out by David Savitt to whom we extend our thanks.

\noindent Let $v_{p}$ be the valuation of $\bar{%
\mathbb{Q}%
}_{p}$ normalized so that $v_{p}(p)=1$ and let val$_{L}(x)=ev_{p}(x)$ for any
$x\in L.$ Following \cite[\S 3]{BS06}, we define
\begin{equation}
t_{N}(D):=\dfrac{1}{[L:%
\mathbb{Q}
_{p}]}\text{val}_{L}\left(  \text{det}_{L_{0}}\varphi^{f}\right)
\label{t newton}%
\end{equation}
and
\begin{equation}
t_{H}(D_{L}):=\sum\limits_{\sigma\in S_{L}}\sum\limits_{j\in%
\mathbb{Z}
}\left(  \text{Fil}^{j}D_{L,\sigma}/\text{Fil}^{j+1}D_{L,\sigma}\right)  .
\label{t hodge}%
\end{equation}
Recall that the map $\varphi^{f}$ is $L_{0}\otimes_{%
\mathbb{Q}
_{p}}E$-linear. The filtered $\varphi$-module $(D,\varphi)$ is weakly
admissible if $t_{H}(D_{L})=t_{N}(D)$ and $t_{H}(D_{L}^{\prime})\leq
t_{N}(D^{\prime})$ for any $\varphi$-stable $L_{0}$-subspace $D^{\prime
}\subseteq D,\ $where $D_{L}^{\prime}=L\otimes_{L_{0}}D^{\prime},\ $and
$D_{L}^{\prime}$ is equipped with the induced filtration. By \cite[Prop.
3.1.1.5]{BM02} (with trivial modifications adopted to our definitions of the
Hodge and Newton invariants), one may only check the inequalities above for
$\varphi$-stable $L_{0}\otimes_{%
\mathbb{Q}
_{p}}E$-submodules $D^{\prime}$ of $D.$ We first determine the $L_{0}\otimes_{%
\mathbb{Q}
_{p}}E$-submodules of $D$ which are stable under Frobenius and the monodromy.

\begin{proposition}
\label{D(theta ) proposition}Let $\underline{\eta}=(\eta_{1},\eta_{2})\ \ $ be
an ordered basis with respect to which the matrix of Frobenius has the form
$[\varphi]_{\underline{\eta}}=\left(
\begin{array}
[c]{ll}%
\vec{\alpha} & \vec{0}\\
\vec{\gamma} & \vec{\delta}%
\end{array}
\right)  .$ All the $\varphi$-stable $L_{0}\otimes_{%
\mathbb{Q}
_{p}}E$-submodules of $D$ are $0,~D,~D_{2}=(E^{\mid\mathcal{S}_{L_{0}}\mid
})\eta_{2},$ or of the form $D_{\vec{\theta}}=(E^{\mid\mathcal{S}_{L_{0}}\mid
})(\eta_{1}+\vec{\theta}\eta_{2})$ for some vector $\vec{\theta}\in
E^{\mid\mathcal{S}_{L_{0}}\mid}.$
\end{proposition}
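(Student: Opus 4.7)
The plan is to decompose $D$ componentwise via the orthogonal idempotents $e_{\tau_i}$ of $E^{\mid\mathcal{S}_{L_{0}}\mid}$, and then reduce the classification of $\varphi$-stable submodules to an elementary linear-algebra statement about how $\varphi$ moves lines between consecutive two-dimensional $E$-vector spaces.

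First I would set $D^{(i)} := e_{\tau_i} D$ and $\eta_r^{(i)} := e_{\tau_i}\eta_r$, giving the decomposition $D = \bigoplus_{i=0}^{f-1} D^{(i)}$ with $D^{(i)} = E\eta_1^{(i)} \oplus E\eta_2^{(i)}$. Any $E^{\mid\mathcal{S}_{L_{0)}}\mid}$-submodule $D'$ automatically splits as $\bigoplus_i D'^{(i)}$ with each $D'^{(i)} \subseteq D^{(i)}$ an $E$-subspace of dimension $0$, $1$, or $2$. Using $\varphi(e_{\tau_i}) = e_{\tau_{i-1}}$ together with the semilinearity of $\varphi$, the matrix relation $(\varphi(\eta_1),\varphi(\eta_2)) = (\eta_1,\eta_2)[\varphi]_{\underline{\eta}}$ unpacks componentwise to
\[
\varphi(\eta_1^{(i+1)}) = \alpha_i \eta_1^{(i)} + \gamma_i \eta_2^{(i)}, \qquad \varphi(\eta_2^{(i+1)}) = \delta_i \eta_2^{(i)},
\]
so each restriction $\varphi\colon D^{(i+1)} \to D^{(i)}$ is an $E$-linear map with matrix $\left(\begin{smallmatrix}\alpha_i & 0\\ \gamma_i & \delta_i\end{smallmatrix}\right)$. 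Because $\varphi$ is required to be an automorphism of $D$, each of these is a bijection; in particular $\alpha_i,\delta_i \in E^\times$ for every $i$.

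Next, $\varphi$-stability of $D'$ means $\varphi(D'^{(i+1)}) \subseteq D'^{(i)}$ for all $i$, and since the restricted maps are bijections this forces the $E$-dimensions $\dim_E D'^{(i)}$ to be independent of $i$; call this common dimension $d$. The cases $d = 0$ and $d = 2$ immediately give $D' = 0$ and $D' = D$, respectively, which are the first two possibilities in the proposition.

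The heart of the argument is $d = 1$: each $D'^{(i)}$ is a line in $D^{(i)}$, so is either the "vertical" line $E\eta_2^{(i)}$ or a unique line of the form $E(\eta_1^{(i)} + \theta_i \eta_2^{(i)})$ with $\theta_i \in E$. The formula $\varphi(\eta_2^{(i+1)}) = \delta_i \eta_2^{(i)}$ with $\delta_i \neq 0$ shows that $\varphi$ sends vertical lines to vertical lines, while
\[
\varphi(\eta_1^{(i+1)} + \theta_{i+1}\eta_2^{(i+1)}) = \alpha_i \eta_1^{(i)} + (\gamma_i + \theta_{i+1}\delta_i)\eta_2^{(i)}
\]
with $\alpha_i \neq 0$ shows that $\varphi$ sends non-vertical lines to non-vertical lines. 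Iterating cyclically through $i = 0, 1, \ldots, f-1$, either every $D'^{(i)}$ is vertical, in which case $D' = D_2$, or every $D'^{(i)}$ is non-vertical, in which case collecting the $\theta_i$ into $\vec{\theta} = (\theta_0,\ldots,\theta_{f-1})$ yields $D' = D_{\vec{\theta}}$. No step is a genuine obstacle; the only care required is in lining up the cyclic index shift of $\varphi$ with the matrix entries, which is bookkeeping rather than a real difficulty.
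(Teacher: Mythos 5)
Your proof is correct, but it is organized along a genuinely different route from the paper's. The paper works with the submodule $M$ globally over $E^{\mid\mathcal{S}_{L_{0}}\mid}$: it splits into cases according to whether $M\cap(E^{\mid\mathcal{S}_{L_{0}}\mid})\eta_{2}$ is zero or not, uses the idempotents and repeated application of $\varphi$ to show that a nonzero intersection forces $\eta_{2}\in M$ (and then $M=D_{2}$ or $M=D$), and in the complementary case builds up an element $\eta_{1}+\vec{\theta}\eta_{2}\in M$ and checks directly that this generates $M$. You instead make the idempotent decomposition $D=\bigoplus_{i}D^{(i)}$ primary, observe that any submodule splits as $\bigoplus_{i}D'^{(i)}$, prove that $\varphi$-stability plus bijectivity of $\varphi\colon D^{(i+1)}\to D^{(i)}$ forces the componentwise dimensions to be constant, and then classify the $d=1$ case by the dichotomy vertical versus non-vertical, which the lower-triangular shape of $[\varphi]_{\underline{\eta}}$ preserves under the cyclic shift. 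Both arguments hinge on the same two facts (the cyclic action of $\varphi$ on the idempotents and the vanishing of the $(1,2)$ entry of Frobenius), but your version makes the uniform-dimension phenomenon an explicit intermediate step rather than something implicit in the casework, and it cleanly separates the free-module bookkeeping from the line-tracking. The paper's version is a little shorter because it never articulates the dimension invariance; yours is easier to audit and generalizes more directly to higher rank, where the dichotomy on $M\cap(E^{\mid\mathcal{S}_{L_{0}}\mid})\eta_{2}$ would not suffice.
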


\begin{proof}
Let $M$ be a $\varphi$-stable submodule of $D.\ $\noindent Case (1). If
$M\cap(E^{\mid\mathcal{S}_{L_{0}}\mid})\eta_{2}\not =0.$ Let $\vec{x}\eta
_{2}\in M$ with $\vec{x}\not =\vec{0}.$ Then $\sum\limits_{i\in J_{\vec{x}}%
}e_{\tau_{i}}\eta_{2}\in M$, and after multiplying by $e_{\tau_{i}}$ for some
$i\in J_{\vec{x}}$ we get $e_{\tau_{i}}\eta_{2}\in M$ for some (in fact all)
$i\in J_{\vec{x}}.$ We repeatedly act by $\varphi$ and see that $e_{\tau_{i}%
}\eta_{2}\in M$ for all $i,$ which implies that $\eta_{2}\in M.$ If $\vec
{x}\eta_{1}+\vec{y}\eta_{2}\in M$ for some $\vec{x}\not =\vec{0},$ then
$\vec{x}\eta_{1}\in M. $ Arguing as before, given that $\eta_{2}\in M,$ we see
that $\eta_{1}\in M$ therefore $M=D.$ Hence in this case $M=(E^{\mid
\mathcal{S}_{L_{0}}\mid})\eta_{2}\ $or $M=D.$ Case\ (2). If $M\cap
(E^{\mid\mathcal{S}_{L_{0}}\mid})\eta_{2}=0.$ Assume that $M\neq0$ and let
$\vec{x}\eta_{1}+\vec{y}\eta_{2}\in M$ with $\vec{x}\not =\vec{0}.$ Then
$(\sum\limits_{i\in J_{\vec{x}}}e_{\tau_{i}})\eta_{1}+\vec{y}_{1}\eta_{2}\in
M$ for some $\vec{y}_{1}\in E^{\mid\mathcal{S}_{L_{0}}\mid}$and $e_{\tau_{i}%
}\eta_{1}+\vec{y}_{2}\eta_{2}\in M$ for some index $i\in J_{\vec{x}}$ and some
vector $\vec{y}_{2}.$ We repeatedly act by $\varphi$ and use the fact that $M$
is $\varphi$-stable to get that $\eta_{1}+\vec{\theta}\eta_{2}\in M$ for some
vector $\vec{\theta}.$ We will show that $M=(E^{\mid\mathcal{S}_{L_{0}}\mid
})(\eta_{1}+\vec{\theta}\eta_{2}).$ Every nonzero element of $M$ has the form
$\vec{\alpha}\eta_{1}+\vec{\beta}\eta_{2}$ for some vectors $\vec{\alpha
}\not =\vec{0}\ $and $\vec{\beta}.$ Since $\vec{\alpha}\eta_{1}+\vec{\alpha
}\cdot\vec{\theta}\eta_{2}$ $\in M,$ we see that $(\vec{\alpha}\cdot
\vec{\theta}-\vec{\beta})\eta_{2}\in M$ which implies that $\vec{\alpha}%
\cdot\vec{\theta}=\vec{\beta}.$ Then $\vec{\alpha}\eta_{1}+\vec{\beta}\eta
_{2}=\vec{\alpha}\eta_{1}+\vec{\alpha}\cdot\vec{\theta}\eta_{2}=\vec{\alpha
}(\eta_{1}+\vec{\theta}\eta_{2}).$
\end{proof}

\noindent We now determine the vectors $\vec{\theta}$ for which $D_{\vec
{\theta}}=(E^{\mid\mathcal{S}_{L_{0}}\mid})(\eta_{1}+\vec{\theta}\eta_{2}) $
is $\varphi$-stable. We have the following cases.

Case (1). If $D$ is \textnormal{F}-semisimple and non-scalar. In this case
$D_{\vec{\theta}}$ is $\varphi$-stable if and only if there exists $\vec{t}\in
E^{\mid\mathcal{S}_{L_{0}}\mid}$ such that $\varphi(\eta_{1}+\vec{\theta}%
\eta_{2})=\vec{t}(\eta_{1}+\vec{\theta}$ $\eta_{2}).$ We repeatedly act by
$\varphi$ and get $\varphi^{f}(\eta_{1})+\vec{\theta}\varphi^{f}(\eta_{2}%
)=$$Nm_{\varphi}(\vec{t})(\eta_{1}+\vec{\theta}\eta_{2}).$ This implies
$Nm_{\varphi}(\alpha\cdot\vec{1})=$$Nm_{\varphi}(\vec{t})$ and$\ \vec
{0}=(\alpha^{f}-\delta^{f})\cdot\vec{\theta}.$ Since $\alpha^{f}\not =%
\delta^{f},\ $the only nontrivial $\varphi$-stable submodules of $D$ are
$D_{1}=(E^{\mid\mathcal{S}_{L_{0}}\mid})\eta_{1}$ and $D_{2}=(E^{\mid
\mathcal{S}_{L_{0}}\mid})\eta_{2}.$

Case (2). If $D$ is \textnormal{F}-scalar we easily see that $D_{\vec{\theta}}$ is
$\varphi$-stable if and only if $\vec{\theta}=\theta\cdot\vec{1}$ for some
$\theta\in E^{\times}.$

Case (3). If $D$ is not \textnormal{F}-semisimple$\ D_{\vec{\theta}}$ is never
$\varphi$-stable.

\noindent Note that the submodules $D_{1},\ D_{2}$ and $D_{\theta}$ are
pairwise complementary in $D,$ and so are $D_{\theta_{1}}$ and $D_{\theta_{2}%
}$ whenever $\theta_{1}\neq\theta_{2}.$ \noindent Combining the results of the
previous paragraph with those of Proposition \ref{the monodromy}, we get the
following proposition.

\begin{proposition}
\label{something}Let $\underline{\eta}$ be a standard basis of a$\ (\varphi
,N)$-module$\ D.$ The submodules of $D$ fixed by Frobenius and the\ monodromy are\

\begin{enumerate}
\item $0,$ $D,$ $D_{1}=(E^{\mid\mathcal{S}_{L_{0}}\mid})\eta_{1}$ and
$D_{2}=(E^{\mid\mathcal{S}_{L_{0}}\mid})\eta_{2}\ $if $D$ is F-semisimple,
non-\textnormal{F}-scalar;

\item $0,$ $D,$ $D_{1},$ $D_{2}$ and $D_{\theta}=(E^{\mid\mathcal{S}_{L_{0}%
}\mid})(\eta_{1}+\theta\cdot\vec{1}\cdot\eta_{2}),$ for any $\theta\in
E^{\times}$ if $D$ is \textnormal{F}-scalar;

\item $0,$ $D$ and $D_{2}$ if $D$ is \textnormal{F}-semisimple.
\end{enumerate}
\end{proposition}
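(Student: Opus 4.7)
The plan is to intersect two already-computed lists: the $\varphi$-stable $L_{0}\otimes_{\mathbb{Q}_{p}}E$-submodules of $D$, and the $N$-stable ones. Proposition~\ref{D(theta ) proposition} together with the case analysis carried out immediately before the statement has already produced the first list. In the F-semisimple non-scalar case only $\vec{\theta}=\vec 0$ among the $D_{\vec{\theta}}$ is $\varphi$-stable (yielding $D_{1}$), so the $\varphi$-stable submodules are $\{0,D,D_{1},D_{2}\}$; in the F-scalar case every $D_{\vec{\theta}}$ with $\vec{\theta}=\theta\cdot\vec 1$ survives, contributing the additional submodules $D_{\theta}$; and in the non-F-semisimple case no $D_{\vec{\theta}}$ is $\varphi$-stable, leaving $\{0,D,D_{2}\}$.

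It then remains to impose $N$-stability, for which Proposition~\ref{the monodromy} supplies the normal forms. In the F-scalar case the diagonal shape $[\varphi]_{\underline{\eta}}=\mathrm{diag}(\alpha\cdot\vec 1,\alpha\cdot\vec 1)$ gives $\alpha^{f}\neq p^{\pm f}\alpha^{f}$, so Proposition~\ref{the monodromy}(1)(a) forces $N=0$; in the non-F-semisimple case Proposition~\ref{the monodromy}(2) again gives $N=0$. Hence in both scenarios $N$-stability is automatic and the $\varphi$-stable list carries over verbatim, producing items (2) and (3). For item (1) the verification is read off directly from the matrix $[N]_{\underline{\eta}}$ supplied by Proposition~\ref{the monodromy}(1): the $(2,1)$- or $(1,2)$-concentrated shape of $[N]_{\underline{\eta}}$ together with the standard-basis normalization from Corollary~\ref{simplify N} is immediately compatible with stability of the two coordinate lines $D_{1}$ and $D_{2}$, by a single matrix multiplication.

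The main obstacle, such as it is, lies in the bookkeeping for case (1) with nontrivial monodromy, where one has to combine the allowed normal forms of $[\varphi]_{\underline{\eta}}$ and $[N]_{\underline{\eta}}$ and use the swap-and-rescale step of Corollary~\ref{simplify N}. Everything else reduces to the observation that $N=0$ collapses $(\varphi,N)$-stability to $\varphi$-stability, at which point the preceding propositions supply the classification directly.
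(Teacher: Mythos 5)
Your overall strategy — intersect the catalogue of $\varphi$-stable $L_{0}\otimes_{\mathbb{Q}_{p}}E$-submodules from Proposition~\ref{D(theta ) proposition} and the ensuing case discussion with $N$-stability read off from the normal form of Proposition~\ref{the monodromy} — is exactly the paper's route, and your handling of the F-scalar and non-F-semisimple cases is correct: $\alpha^{f}\neq p^{\pm f}\alpha^{f}$ forces $N=0$ when Frobenius is scalar, and Proposition~\ref{the monodromy}(2) forces $N=0$ in the non-F-semisimple case, so in both those cases $(\varphi,N)$-stability collapses to $\varphi$-stability.

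However, the justification you give for item (1) with nontrivial monodromy is wrong and would, if taken at face value, prove a false statement. After the normalization of Corollary~\ref{simplify N}, the standard basis has $[N]_{\underline{\eta}}=\left(\begin{smallmatrix}\vec 0 & \vec 0\\ \vec 1 & \vec 0\end{smallmatrix}\right)$, so $N(\eta_{1})=\eta_{2}\notin D_{1}$; the line $D_{1}=(E^{\mid\mathcal{S}_{L_{0}}\mid})\eta_{1}$ is therefore \emph{not} $N$-stable when $N\neq 0$. A "single matrix multiplication" shows the opposite of what you assert: the off-diagonal concentration of $[N]_{\underline{\eta}}$ preserves $D_{2}$ only (since $N\eta_{2}=0$), not both coordinate lines. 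In the F-semisimple, non-F-scalar case one must therefore split on whether $N$ is trivial: if $N=0$ the list is $\{0,D,D_{1},D_{2}\}$, while if $N\neq 0$ it is $\{0,D,D_{2}\}$. The paper's own statement is terse about this bifurcation, but it handles it correctly downstream (compare the potentially crystalline and potentially semistable noncrystalline cases in Section~\ref{list of the weakly admissible}). Your proof should make the same distinction rather than asserting that $D_{1}$ survives regardless.
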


\noindent We proceed to compute Hodge invariants. We retain the notation of
Proposition \ref{something} and we write $D_{i,L}:=L\otimes_{L_{0}}D_{i}$ for
$i=1,2$ and $D_{\theta,L}:=L\otimes_{L_{0}}D_{\theta}$ for any $\theta\in
E^{\times}.$

\begin{proposition}
\label{t_H}The Hodge invariants of the filtered modules $D_{L},$ $D_{i,L}$ and
$D_{\theta,L}$ are%
\[
t_{H}(D_{L})=\sum\limits_{i\in I_{0}}k_{i},\ \ t_{H}(D_{1,L})=\sum
\limits_{\{i\in I_{0}\ :\ y_{i}=0\}}k_{i},\ t_{H}(D_{2,L})=\sum\limits_{\{i\in
I_{0}\ :\ x_{i}=0\}}k_{i}%
\]
and
\[
t_{H}(D_{\theta})=\noindent\sum\limits_{\{i\in J_{\vec{x}}\ \cap\ J_{\vec{y}%
}\ :\text{ }x_{i}\theta=y_{i}\}}k_{i}.
\]

\end{proposition}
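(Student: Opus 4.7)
The plan is to use the fact that the filtration of $D_L$ decomposes as $\text{Fil}^j D_L = \bigoplus_{\sigma_i \in \mathcal{S}_L} \text{Fil}^j D_{L,\sigma_i}$, so that $t_H$ is additive over the embeddings: for any $E^{\mid \mathcal{S}_L\mid}$-submodule $M \subseteq D_L$ equipped with the induced filtration,
\[
t_H(M) = \sum_{i=0}^{m-1} \sum_{j \in \mathbb{Z}} j \cdot \dim_E\bigl(\text{Fil}^j M_{\sigma_i} / \text{Fil}^{j+1} M_{\sigma_i}\bigr).
\]
Reading off the explicit filtration $(\ref{filtrations})$ component-by-component, the piece $e_{\sigma_i}\text{Fil}^j D_L$ has $E$-dimension $2$ for $j \le 0$, dimension $1$ for $1 \le j \le k_i$, and is zero for $j \ge k_i + 1$. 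The only nonzero graded pieces at $\sigma_i$ are thus in degrees $0$ and $k_i$, each of $E$-dimension $1$, contributing $0 + k_i = k_i$. Summing over $i$ yields $t_H(D_L) = \sum_{i \in I_0} k_i$.

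For the rank-one submodules $D_1,D_2,D_\theta$ furnished by Proposition \ref{something}, each component $D'_{L,\sigma_i} := e_{\sigma_i}(L \otimes_{L_0} D')$ is one-dimensional over $E$. The induced filtration on $D'_{L,\sigma_i}$ therefore has a single jump, located at some $j_i \in \{1,\, k_i + 1\}$. The jump is at $k_i + 1$ (Hodge--Tate weight $-k_i$, contributing $k_i$ to $t_H$) precisely when $\text{Fil}^{k_i} D_{L,\sigma_i} \subseteq D'_{L,\sigma_i}$, which, as both are $E$-lines in the $E$-plane $D_{L,\sigma_i}$, is equivalent to equality.

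The task reduces to comparing $E$-lines. The filtration line is spanned by $x_i(e_{\sigma_i}\otimes\eta_1) + y_i(e_{\sigma_i}\otimes\eta_2)$, while the three submodule lines are $D_{1,L,\sigma_i} = E(e_{\sigma_i}\otimes\eta_1)$, $D_{2,L,\sigma_i} = E(e_{\sigma_i}\otimes\eta_2)$ and $D_{\theta,L,\sigma_i} = E\bigl(e_{\sigma_i}\otimes\eta_1 + \theta\,(e_{\sigma_i}\otimes\eta_2)\bigr)$; for the third we use that $\theta\cdot\vec{1} \in E^{\mid\mathcal{S}_{L_0}\mid}$ maps to $\theta\cdot\vec{1} \in E^{\mid\mathcal{S}_L\mid}$ under $\vec\alpha \mapsto \vec\alpha^{\otimes e}$. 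Under the standing assumption $(x_i, y_i) \neq (0, 0)$ these lines coincide respectively iff $y_i = 0$, iff $x_i = 0$, and iff $(x_i, y_i)$ is a nonzero scalar multiple of $(1, \theta)$, the last being equivalent to $i \in J_{\vec x} \cap J_{\vec y}$ together with $x_i\theta = y_i$. Summing $k_i$ over the resulting index sets delivers the four stated formulae. I do not anticipate any real obstacle; the argument is a local linear-algebra check at each embedding, and the only care required is to make sure the excluded degenerate case $(x_i, y_i) = (0,0)$ never contributes to any sum.
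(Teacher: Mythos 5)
Your proof is correct and takes essentially the same approach as the paper: both compute the Hodge invariants via the induced filtration $\mathrm{Fil}^j D'_L = D'_L \cap \mathrm{Fil}^j D_L$, the paper working in terms of the stratified index sets $I_{r,\vec x} = I_r \cap J_{\vec x}'$ and counting their sizes, while you decompose at once into the embedding components and reduce to comparing $E$-lines inside each $E$-plane $D_{L,\sigma_i}$. The end computation is the same; your per-embedding phrasing is a readable repackaging of the paper's argument.
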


\begin{proof}
The formula for $t_{H}(D_{L})$ follows immediately form Formula (\ref{t hodge}%
)$\ $since
\[
\text{dim}_{E}(E^{\mid\mathcal{S}_{L}\mid})f_{J}\left(  \vec{x}(1\otimes
\eta_{1})+f_{J_{\vec{y}}}\left(  1\otimes\eta_{2}\right)  \right)  =\mid J\mid
\]
for any $J\subset I_{0}$ (recall that $(x_{i},y_{i})\not =(0,0)$ for all $i$).
\noindent By definition,
\[
Fil^{j}(D_{2,L})=D_{2,L}\cap\text{Fil}^{j}(D_{L})
\]
for all $j.$ Let $1+w_{r-1}\leq j\leq w_{r}$ for some $1\leq r\leq t-1.$ We
have $\vec{t}(1\otimes\eta_{2})=\vec{\xi}\cdot f_{I_{r}}\cdot\vec{x}(\left(
1\otimes\eta_{1})+\vec{y}(1\otimes\eta_{2})\right)  $ if and only if $\vec
{\xi}\cdot\vec{x}\cdot f_{I_{r}}=\vec{0}$ and $\vec{\xi}\cdot\vec{y}\cdot
f_{I_{r}}=\vec{t}.$ For all $i\in I_{r}$ with $x_{i}\not =0$ we have $\xi
_{i}=0.$ If $x_{i}=0,$ then $y_{i}\not =0$ and as $\vec{\xi}$ varies in
$E^{\mid\mathcal{S}_{L}\mid}\ $the vector $\vec{\xi}\cdot\vec{y}\cdot
f_{I_{r}}$ can be any element of $f_{I_{r}\cap\ J_{\vec{x}}^{^{\prime}}%
}(E^{\mid\mathcal{S}_{L}\mid}),$ where $J_{\vec{x}}^{\prime}$ is the
complement of $J_{\vec{x}}$ in $I_{0}.$ Let $I_{r,\vec{x}}=I_{r}\cap
J_{\vec{x}}^{\prime}.$ For all $1+w_{r-1}\leq j\leq w_{r},\ $one has
Fil$^{j}(D_{2,L})=(E^{\mid\mathcal{S}_{L}\mid})f_{I_{r,\vec{x}}}(1\otimes
\eta_{2})$ and therefore
\[
\text{Fil}^{j}(D_{2,L})=\left\{
\begin{array}
[c]{l}%
\ \ D_{2,L}\text{\ \ \ \ \ \ \ \ \ if\ \ }j\leq0,\\
\left(  E^{\mid S_{L}\mid_{I_{i,\vec{x}}}}\right)  (1\otimes\eta_{2}),\text{
\ if\ \ }\\
1+w_{i-1}\leq j\leq w_{i},\ \text{for }i=0,1,...,t-1,\\
\ \ 0\text{\ \ \ \ \ \ \ \ \ \ \ \ \ \ if \ }j\geq1+w_{t-1}.
\end{array}
\right.
\]
Clearly $t_{H}(D_{2,L})=\sum\limits_{i=0}^{t-1}w_{i}(\mid I_{i,\vec{x}}%
\mid-\ I_{i+1,\vec{x}}\mid)$ (with $I_{t,\vec{x}}=\varnothing$). Since
\noindent$\mid I_{i,\vec{x}}\mid-\mid I_{i+1,\vec{x}}\mid=\#\{j\in I_{0}%
:k_{j}=w_{i}$ and $x_{j}=0\},$ we have%
\[
t_{H}(D_{2,L})=\sum\limits_{\{i\in I_{0}:\ x_{i}=0\}}k_{i}.
\]
The computation for $t_{H}(D_{1,L})\ $is identical.\ Last, for any $\theta\in
E^{\times},$
\[
\text{Fil}^{j}(D_{\theta})=D_{\theta}\cap\text{Fil}^{j}(D).
\]
Let $1+w_{r-1}\leq j\leq w_{r}$ for some $1\leq r\leq t-1$ and let $\vec
{t}(\eta_{1}+\theta\cdot\vec{1}\eta_{2})=\vec{\xi}\cdot f_{I_{r}}(\vec{x}%
\eta_{1}+\vec{y}\eta_{2})\in\ $Fil$^{j}(D_{\theta}).$ One easily sees that
$t_{i}\ $can be any elements of $E\ $as $\xi_{i}\ $varies in $E$ if and only
if $y_{i}=x_{i}\theta,$ and $t_{i}=0$ in any other case. Therefore
Fil$^{j}D_{\theta}=\left(  E^{\mid S_{L}\mid_{I_{r}(\theta)}}\right)
(\eta_{1}+\theta\cdot\vec{1}\eta_{2}),$ where \noindent$I_{r}(\theta
):=I_{r}\cap J_{\vec{x}}\cap J_{\vec{y}}\cap\{i\in I_{0}:x_{i}\theta
=y_{i}\ \}$ \noindent for all $1+w_{r-1}\leq j\leq w_{r}.$ This implies
\noindent$t_{H}(D_{\theta})=\sum\limits_{i=0}^{t-1}w_{i}\#\{i\in I_{0}%
:w_{i}=k_{i},\ x_{i}y_{i}\neq0\ $and$\ \theta=x_{i}^{-1}\cdot y_{i}%
\}=\sum\limits_{\{i\in J_{\vec{x}}\cap J_{\vec{y}}:\text{ }x_{i}\theta
=y_{i}\}}k_{i}.$
\end{proof}

\noindent For the Newton invariants of $D,$ $D_{i},$ and $D_{\theta}$ we have
the following proposition.

\begin{proposition}
If the diagonal entries of the matrix of $\varphi$ with respect to a standard
basis are $\alpha\cdot\vec{1}\ $and $\delta\cdot\vec{1},$ then $t_{N}%
(D)=efv_{p}(\alpha\delta),$ $\ t_{N}(D_{2})=efv_{p}(\delta),\ \ t_{N}(D_{1})$
\noindent$=efv_{p}(\alpha)$ and $t_{N}(D_{\theta})=efv_{p}(\alpha).$
\end{proposition}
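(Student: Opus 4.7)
The plan is to reduce everything to a direct matrix computation in a standard basis $\underline{\eta}$. By Proposition \ref{new shape of phi} (together with Corollary \ref{simplify N} when the monodromy is nontrivial), we may assume that $[\varphi]_{\underline{\eta}} = \mathrm{diag}(\alpha\cdot\vec{1}, \delta\cdot\vec{1})$ whenever $D$ is \textnormal{F}-semisimple, and is of the unipotent-triangular shape of Proposition \ref{new shape of phi}(3) (with $\alpha = \delta$) when $D$ is non-\textnormal{F}-semisimple. Iterating $\varphi$ yields $[\varphi^{f}]_{\underline{\eta}} = Nm_{\varphi}([\varphi]_{\underline{\eta}})$, which in both the diagonal and the triangular shape has determinant equal to $(\alpha\delta)^{f}\cdot\vec{1}\in E^{\mid\mathcal{S}_{L_{0}}\mid}$. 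Under the isomorphism $\xi_{L_{0}}$ this corresponds to $1\otimes(\alpha\delta)^{f}\in L_{0}\otimes_{\mathbb{Q}_{p}}E$, and the definition $\left(\ref{t newton}\right)$ then gives $t_{N}(D) = ef\,v_{p}(\alpha\delta)$.

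For the rank-one $\varphi$-stable submodules listed in Proposition \ref{something}, I would read off the restricted action of $\varphi$ directly from the matrix $[\varphi]_{\underline{\eta}}$ and then repeat the same extraction. For $D_{1} = (E^{\mid\mathcal{S}_{L_{0}}\mid})\eta_{1}$ and $D_{2} = (E^{\mid\mathcal{S}_{L_{0}}\mid})\eta_{2}$, the columns of $[\varphi]_{\underline{\eta}}$ show that $\varphi$ acts as multiplication by $\alpha\cdot\vec{1}$ and $\delta\cdot\vec{1}$ respectively (after swapping $\eta_{1},\eta_{2}$ if needed in the triangular case $D_{2}$ is the only stable subspace), so $\varphi^{f}$ acts by $1\otimes\alpha^{f}$, resp.\ $1\otimes\delta^{f}$, and the same valuation-extraction step yields $t_{N}(D_{1}) = ef\,v_{p}(\alpha)$ and $t_{N}(D_{2}) = ef\,v_{p}(\delta)$.

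The case of $D_{\vec\theta}$ requires the extra observation that, in the \textnormal{F}-scalar case where $\vec\theta = \theta\cdot\vec{1}$ with $\theta\in E^{\times}$, the constant vector $\theta\cdot\vec{1}\in E^{\mid\mathcal{S}_{L_{0}}\mid}$ is fixed by the cyclic shift $\varphi$. Consequently
\[
\varphi(\eta_{1}+\theta\cdot\vec{1}\cdot\eta_{2}) = \alpha\cdot\vec{1}\cdot\eta_{1}+\theta\cdot\vec{1}\cdot\alpha\cdot\vec{1}\cdot\eta_{2} = \alpha\cdot\vec{1}\cdot(\eta_{1}+\theta\cdot\vec{1}\cdot\eta_{2}),
\]
so $\varphi$ restricts to $D_{\vec\theta}$ as multiplication by $\alpha\cdot\vec{1}$, hence $\varphi^{f}$ acts as $1\otimes\alpha^{f}$ and $t_{N}(D_{\vec\theta}) = ef\,v_{p}(\alpha)$ (which equals $ef\,v_{p}(\delta)$ since $\alpha=\delta$ in the \textnormal{F}-scalar setting).

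The only part that requires genuine care, and where the referee's correction mentioned in the paper is relevant, is the precise passage from the $(L_{0}\otimes E)$-valued determinant $1\otimes(\alpha\delta)^{f}$ to the quantity $\mathrm{val}_{L}(\det_{L_{0}}\varphi^{f})$ appearing in $\left(\ref{t newton}\right)$; one needs to track consistently the two module structures on $L_{0}\otimes_{\mathbb{Q}_{p}}E$ via $\xi_{L_{0}}$ and the factor $e=[L:L_{0}]$ coming from the identity $\mathrm{val}_{L}=e\,v_{p}$. Once that normalisation is fixed, each of the four formulas is obtained by the same three-step recipe: restrict to the submodule, read off the scalar action of $\varphi$ from $[\varphi]_{\underline{\eta}}$, and apply $\left(\ref{t newton}\right)$.
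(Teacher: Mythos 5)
Your proposal is correct and follows essentially the same route as the paper, which disposes of the proposition with the one-line remark that it ``follows easily from Formula~$\left(\ref{t newton}\right)$.'' You in fact supply considerably more detail than the paper does: you correctly pass to $[\varphi^f]_{\underline\eta}=Nm_\varphi([\varphi]_{\underline\eta})$, read off the determinant $(\alpha\delta)^f\cdot\vec1$ in both the diagonal and the unipotent-triangular cases, and correctly verify that $\varphi$ restricts to multiplication by $\alpha\cdot\vec1$, $\delta\cdot\vec1$ and (in the \textnormal{F}-scalar case, where $D_\theta$ actually occurs as a $\varphi$-stable submodule and $\alpha=\delta$) $\alpha\cdot\vec1$ on $D_1$, $D_2$ and $D_\theta$ respectively, before applying the same Newton-number recipe to each.

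Two small remarks. First, the one step you explicitly postpone --- the passage from the $(L_0\otimes_{\mathbb{Q}_p}E)$-valued determinant $1\otimes(\alpha\delta)^f$ to the quantity $\mathrm{val}_L\bigl(\det_{L_0}\varphi^f\bigr)$ in $\left(\ref{t newton}\right)$ --- is indeed where all the nontrivial bookkeeping lives, and the paper likewise leaves it implicit; it would strengthen your write-up to say once how $\mathrm{val}_L$ is being extended to the product ring so that the $\big[E:\mathbb{Q}_p\big]$-dependence drops out and one lands on $ef\,v_p(\alpha\delta)$, rather than merely flagging it. Second, your parenthetical attribution of the referee's correction (acknowledged at the start of Section~\ref{the fixed submodules}) to precisely this normalisation is speculation; the paper does not say where the original error in the Newton-invariant computation lay, so you should not present that guess as if it were part of the record.
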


\begin{proof}
Follows easily from Formula (\ref{t newton}) in the beginning of the section.
\end{proof}

\section{The weakly admissible rank two filtered
modules.\label{list of the weakly admissible}}

We summarize the results of the previous sections and list the rank two weakly
admissible filtered $\left(  \varphi,N,L/K,E\right)  $-modules. Before doing
so, we briefly digress to recall some well known facts about Galois types
(\cite[App.B]{CDT99}).

\subsection{Galois types}

Let $\rho:$ $G_{K}\rightarrow GL(V)$ be an $L$-semistable $n$-dimensional $E
$-representation of $G_{K},\ $as in the introduction. Let $W_{L}$ be the Weil
group of $L$ and $W_{K}$ the Weil group of $K.$ Recall that $W_{K}/W_{L}=$
Gal$(L/K).$ The Frobenius endomorphism $\varphi$ of $D_{st}^{L}(V)$ defines an
$E$-linear isomorphism
\[
\varphi:e_{\tau_{i+1}}D_{st}^{L}(V)\rightarrow e_{\tau_{i}}D_{st}^{L}(V),
\]
for each embedding $\tau_{i}\ $of $L_{0}\ $in $E.$ If $e_{K}$ is the absolute
ramification index $K,$ we define an $L_{0}$-linear action of $g\in W_{K}$ on
$D_{st}^{L}(V)$ given by $(g\operatorname{mod}W_{L})\circ\varphi
^{-\alpha(g)e_{K}},$ were the image of $g\ $in Gal$(\bar{k}_{K}/k_{K})\ $is
the $\alpha(g)$-th power of the $q_{K}$-th power map, with $k_{K}$ being the
residue field of $K\ $and $q_{K}$ its cardinality. Since $V$ is $L$%
-semistable, each component $e_{\tau_{i}}D_{st}^{L}(V)$ is an $E$-vector
spaces of dimension $n$ with an induced action of $(W_{K},N).$ Its isomorphism
class is independent of the choice of the embedding $\tau_{i}\ $(cf.
\cite[Lemme 2.2.1.2]{BM02}), and this unique isomorphism class is the
Weil-Deligne representation $WD(\rho)$ attached to $\rho.$

\begin{definition}
A Galois type of degree $2$ is an equivalence class of representations
$\tau:I_{K}\rightarrow GL_{2}(\bar{%
\mathbb{Q}%
}_{p})$ with open kernel which extend to $W_{K}.\ $We say that a
two-dimensional potentially semistable representation has Galois type $\tau$
if $WD(\rho)\mid_{I_{K}}\simeq\tau.\ $
\end{definition}

We have the following lemma.

\begin{lemma}
\label{types lemma}Assume that $p>2$ and let $\tau$ be a Galois type of degree
$2.$ Then $\tau$ has one of the following forms:

$(1)\ \tau\simeq\chi_{1}\mid_{I_{K}}\oplus\chi_{2}\mid_{I_{K}},$ where
$\chi_{1}$ and $\chi_{2}$ are characters of $W_{K}$ finite on $I_{K}; $

$(2)\ \tau\simeq$ Ind$_{W_{K^{\prime}}}^{W_{K}}(\chi)\mid_{I_{K}}\simeq
\chi\mid_{I_{K}}\oplus\chi^{h}\mid_{I_{K}},$ where $K^{\prime}$ is the
quadratic unramified extension of $K,\ \chi$ is a character of $W_{K^{\prime}%
}$ finite on $I_{K^{\prime}}$ which does not extend to $W_{K}, $ and $h$ a
generator of Gal$(K^{\prime}/K);$

$(3)\ \tau\simeq$ Ind$_{W_{K^{\prime}}}^{W_{K}}(\chi)\mid_{I_{K}},$ where
$K^{\prime}$ is a ramified quadratic extension of $K$ and $\chi$ a character
of $W_{K^{\prime}},$ finite on $I_{K^{\prime}},$ such that $\chi
\mid_{I_{K^{\prime}}}$which does not extend to $I_{K}.$
\end{lemma}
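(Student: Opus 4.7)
The plan is to classify $\tau$ by first extending it to a representation $\rho\colon W_{K}\to GL_{2}(\bar{\mathbb{Q}}_{p})$ and then analyzing reducibility of $\rho$ and of $\tau=\rho\mid_{I_{K}}$ separately. Since $\tau$ has open kernel in $I_{K}$, its image is finite and hence semisimple, so we may replace $\rho$ by its Frobenius semisimplification without altering $\tau$. This reduces the problem to three sub-cases parallel to the three forms listed.

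First, if $\rho$ is reducible on $W_{K}$, semisimplicity forces $\rho\simeq\chi_{1}\oplus\chi_{2}$ for characters $\chi_{i}$ of $W_{K}$, and openness of $\ker\tau$ in $I_{K}$ makes each $\chi_{i}\mid_{I_{K}}$ of finite order; this is form $(1)$. Second, if $\rho$ is irreducible but $\tau$ is reducible on $I_{K}$, then $\tau\simeq\chi_{1}\oplus\chi_{2}$ with $\chi_{1}\neq\chi_{2}$ (equality would force $\rho$ to be a twist of a two-dimensional representation of the pro-cyclic group $W_{K}/I_{K}$, hence reducible). Frobenius conjugation must permute $\{\chi_{1},\chi_{2}\}$ nontrivially, since otherwise both eigenlines of $\tau$ would be $W_{K}$-stable. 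The index-$2$ stabilizer of $\chi_{1}$ in $W_{K}$ is the Weil group of the unramified quadratic extension $K'/K$, and Frobenius reciprocity gives $\rho\simeq\mathrm{Ind}_{W_{K'}}^{W_{K}}(\tilde{\chi}_{1})$ with $\tilde{\chi}_{1}$ extending $\chi_{1}$; this is form $(2)$.

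The main case is when $\tau$ is irreducible already on $I_{K}$. Here the projective image $\bar{\tau}(I_{K})$ inside $PGL_{2}(\bar{\mathbb{Q}}_{p})$ is a finite non-abelian subgroup (abelian would diagonalize and contradict irreducibility), and the classical Klein--Dickson list forces it to be dihedral, $A_{4}$, $S_{4}$, or $A_{5}$. The hypothesis $p>2$ is used precisely to exclude the three exceptional cases: the projective image of the wild inertia $I_{K}^{\mathrm{wild}}$ is a normal pro-$p$ subgroup of $\bar{\tau}(I_{K})$, and for $p>2$ the groups $A_{4},S_{4},A_{5}$ possess no nontrivial normal $p$-subgroup, so $I_{K}^{\mathrm{wild}}$ would act projectively trivially; but then $\bar{\tau}$ factors through the abelian tame quotient, contradicting non-abelianness. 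Hence $\bar{\tau}(I_{K})$ is dihedral, so it contains an abelian subgroup of index $2$, whose preimage in $I_{K}$ is an open normal subgroup of index $2$ corresponding via local class field theory to a quadratic extension $K'/K$ with $I_{K'}$ equal to this preimage. On $I_{K'}$ the representation $\tau$ becomes abelian and therefore reducible; irreducibility on $I_{K}$ then forces $I_{K'}\neq I_{K}$, which rules out unramified $K'$ and leaves $K'/K$ ramified quadratic. Writing $\tau\mid_{I_{K'}}=\chi\mid_{I_{K'}}\oplus\chi'\mid_{I_{K'}}$ with $\chi$ extendable to $W_{K'}$, one concludes $\rho\simeq\mathrm{Ind}_{W_{K'}}^{W_{K}}(\chi)$ and $\tau$ has form $(3)$; the condition that $\chi\mid_{I_{K'}}$ does not extend to $I_{K}$ is equivalent to $\tau$ being irreducible on $I_{K}$.

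The step I expect to absorb the most care is the elimination of the projective images $A_{4},S_{4},A_{5}$ for odd $p$: one must correctly track the distinction between the linear image $\tau(I_{K})\subset GL_{2}$ and the projective image in $PGL_{2}$, verify that $I_{K}^{\mathrm{wild}}$ is normal in $I_{K}$ and maps to a normal pro-$p$ subgroup of the image, and appeal to the structure of Sylow $p$-subgroups of the exceptional groups. The remaining steps (Frobenius reciprocity, the dichotomy unramified vs.\ ramified for the quadratic $K'/K$, and matching the $\chi\mid_{I_{K'}}$-non-extension condition with irreducibility of $\tau$) are then formal.
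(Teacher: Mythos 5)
The paper itself gives no proof of this lemma; it is stated as a recollection, with the earlier pointer to \cite[App.\ B]{CDT99}. So there is no in-paper argument to compare against, and your proposal must be judged on its own. Your overall strategy --- extend $\tau$ to $\rho$ on $W_{K}$, semisimplify, case out on reducibility of $\rho$ and then of $\tau$, and handle the irreducible case through the finite subgroup classification in $PGL_{2}$ --- is the standard route, and Cases (1) and (2) are essentially fine once one observes, as you implicitly do, that irreducibility of $\rho$ forces the inducing character not to extend to $W_{K}$.

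There is a genuine gap in Case (3). You justify non-abelianness of $\bar{\tau}(I_{K})$ by asserting that ``abelian would diagonalize and contradict irreducibility.'' This is false: an abelian subgroup of $PGL_{2}(\bar{\mathbb{Q}}_{p})$ is not necessarily conjugate into a torus. The Klein four-group $V_{4}=D_{2}\subset PGL_{2}$ is abelian, and a linear lift of it (the quaternion group $Q_{8}$, or the dihedral group of order $8$) is irreducible on the two-dimensional space. So $\bar{\tau}(I_{K})\cong V_{4}$ is compatible with $\tau$ being irreducible, and your reason does not exclude it. Your wild-inertia argument, as you state it, is aimed only at $A_{4},S_{4},A_{5}$ and also leaves $V_{4}$ standing, since $V_{4}$ \emph{does} have nontrivial normal $2$-subgroups; and since $V_{4}$'s three index-two subgroups are not characteristic, your subsequent ``abelian subgroup of index $2$'' step would not obviously yield a $W_{K}$-stable subgroup, hence no quadratic $K'/K$. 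The correct way to dispose of $V_{4}$ is exactly the mechanism you already use for the exceptionals, just applied again: for $p>2$, $\bar{\tau}(I_{K}^{\mathrm{wild}})$ is a normal pro-$p$ subgroup of the $2$-group $V_{4}$, hence trivial, so $\bar{\tau}$ factors through the pro-cyclic tame quotient of $I_{K}$ and $\bar{\tau}(I_{K})$ must be cyclic --- contradicting $V_{4}$. (For $p=2$ this configuration genuinely occurs, which is where the hypothesis $p>2$ earns its keep beyond the $A_{4},S_{4},A_{5}$ cases.) After this patch, $\bar{\tau}(I_{K})$ is $D_{n}$ with $n\geq 3$, whose rotation subgroup $C_{n}$ is characteristic, so its preimage in $I_{K}$ is stable under $W_{K}$-conjugation; that stability, rather than ``local class field theory,'' is what produces the ramified quadratic $K'/K$ and the induced description. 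With that repair the dihedral case closes as you sketch.
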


\noindent For Galois types we have the following three possibilities:

\begin{itemize}
\item $N\neq0$ and $\tau$ is a scalar (special or Steinberg case);

\item $N=0$ and $\tau$ as in (1) of Lemma $\ref{types lemma}$ (principal
series case);

\item $N=0$ and $\tau$ as in (2) or (3) of Lemma $\ref{types lemma}$
(supercuspidal case).\noindent
\end{itemize}

\noindent Notice that in the unramified supercuspidal case (Case (2) of Lemma
\ref{types lemma}), $\tau$ is reducible and the characters $\chi\mid_{I_{K}}$
and $\chi^{h}\mid_{I_{K}}$ are necessarily distinct, while in the ramified
supercuspidal case (Case (3) of Lemma \ref{types lemma}), $\tau$ is irreducible.

\noindent We now provide the list of rank two weakly admissible filtered
$\left(  \varphi,N,L/K,E\right)  $-modules and comment on the Galois type of
the corresponding potentially semistable representation, understanding that
the above mentioned terminology applies only in case that $p$ is odd, an
assumption not necessary in this paper.

Recall from Section \ref{stable filtr} that there is a right action of $G=$
Gal$(L/K)$ on $I_{0}$ defined by $i\cdot g:=\pi(g)(i),$ where $\pi$ is as in
Section \ref{action F}. This action has orbits $\mathcal{O}_{1},\mathcal{O}%
_{2},...,\mathcal{O}_{\nu},$ where $\nu=\left[  K:%
\mathbb{Q}
_{p}\right]  .$ Let $i_{j}$ be any fixed index in the orbit $\mathcal{O}_{j}$
for any $1\leq j\leq\nu,$ and choose any fixed pair $(x_{i_{j}},y_{i_{j}})\in
E\times E$ with $(x_{i_{j}},y_{i_{j}})\neq(0,0).\ $Assume that the labeled
Hodge-Tate weights are $(\{-k_{i},0\})_{\sigma_{i}},$ with $k_{i}$ non
negative integers.

\subsection{The F-semisimple, non-scalar case}

\noindent There exists an ordered basis

$\underline{\eta}=(\eta_{1},\eta_{2})$ of $D$ over $E^{\mid\mathcal{S}_{L_{0}%
}\mid}$ such that:

\begin{itemize}
\item The Frobenius endomorphism $\varphi$ of $D$ is given by $[\varphi
]_{\underline{\eta}}=$ diag$(\alpha\cdot\vec{1},\delta\cdot\vec{1})$ with
$\alpha,\delta\in E^{\times}$ and $\alpha^{f}\neq\delta^{f};$

\item The Galois action is given by $[g]_{\underline{\eta}}=$ diag$(\chi
_{1}(g)\cdot\vec{1},\chi_{2}(g)\cdot\vec{1})$ for some characters $\chi
_{i}:G\rightarrow E^{\times};$

\item The Galois-stable filtrations are equivalent to
\[%
\begin{array}
[c]{l}%
\ \text{Fil}^{j}(D_{L})=\left\{
\begin{array}
[c]{l}%
\ \ \ \ D_{L}\text{ \ if\ \ \ }j\,\leq0,\\
\left(  E^{\mid S_{L}\mid}\right)  \left(  \vec{x}(1\otimes\eta_{1})+\vec
{y}(1\otimes\eta_{2})\right)  \ \text{\ }\ \ \text{\ \ if\ }1\leq j\leq
w_{0},\\
\left(  E^{\mid S_{L}\mid_{I_{1}}}\right)  \left(  (\vec{x}1\otimes\eta
_{1})+\vec{y}(1\otimes\eta_{2})\text{\ }\right)  \ \text{\ \ if\ \ }%
1+w_{0}\leq j\leq w_{1},\\
\ \ \ \ \ \ \ \ \ \ \ \ \ \ \ \ \ \ \ \ \ \ \cdots\cdots\cdots\\
\left(  E^{\mid S_{L}\mid_{I_{t-1}}}\right)  \left(  \vec{x}(1\otimes\eta
_{1})+\vec{y}(1\otimes\eta_{2})\right)  \ \text{if\ \ }1+w_{t-2}\leq j\leq
w_{t-1},\\
\ \ \ \ \ \ 0\text{\ \ if \ \ }j\geq1+w_{t-1},
\end{array}
\right. \\
\\
\text{with\ }\vec{x}=\sum\limits_{j=1}^{\nu}\left\{  \sum\limits_{g\in
G}x_{i_{j}}\cdot\chi_{1}(g^{-1})\cdot e_{\pi(g)(i_{j})}\right\}  ,\ \ \vec
{y}=\sum\limits_{j=1}^{\nu}\left\{  \sum\limits_{g\in G}y_{i_{j}}\cdot\chi
_{2}(g^{-1})\cdot e_{\pi(g)(i_{j})}\right\}  ,\\
\\
\text{where the sets}\ I_{r}\ \text{are}\ \text{unions\ of\ }G\text{-orbits}%
\ \text{of}\ I_{0}\ \text{for\ all\ }r.
\end{array}
\]
$\ \ $
\end{itemize}

\subsubsection{The potentially crystalline
case\label{pot crystalline f semisimple}}

\begin{itemize}
\item The Frobenius-stable submodules are $0,\ D,$ $D_{1}=(E^{\mid
\mathcal{S}_{L_{0}}\mid})\eta_{1}$ and

$D_{2}=(E^{\mid\mathcal{S}_{L_{0}}\mid})\eta_{2};$

\item The filtered $(\varphi,L/K,E)$-module $D$ is weakly admissible if and
only if \noindent%
\begin{equation}%
\begin{array}
[c]{c}%
\text{(i)}\ efv_{p}(\alpha\delta)=\sum\limits_{i\in I_{0}}k_{i}\ \ \\
\\
\text{(ii)}\ efv_{p}(\alpha)\geq\sum\limits_{\{i\in I_{0}:\ y_{i}=0\}}%
k_{i}\ \text{and}\noindent\ \text{(iii)}\ efv_{p}(\delta)\geq\sum
\limits_{\{i\in I_{0}:\ x_{i}=0\}}k_{i},
\end{array}
\label{1}%
\end{equation}
$\ \ \ \ \ \ $
\end{itemize}

\noindent where $e$ is the absolute ramification index and $f$ the absolute
inertia degree of $L.$ Assuming that $D$ is weakly admissible,

\begin{enumerate}
\item It is irreducible if and only if both inequalities (ii) and (iii) in
(\ref{1}) are strict;

\item It is reducible, non-split if and only if exactly one of the
inequalities in (\ref{1}) is strict. If inequality (ii) is strict, the only
nontrivial weakly admissible submodule is $D_{2},$ while if inequality (iii)
is strict the only weakly admissible submodule is $D_{1};$

\item It is split-reducible if and only if $I_{0}^{+}\cap J_{\vec{x}}\cap
J_{\vec{y}}=\varnothing.$ \noindent The only nontrivial weakly admissible
submodules are $D_{1}\ $and $D_{2}.$
\end{enumerate}

\noindent The corresponding potentially crystalline representation is a
principal series.

\subsubsection{The potentially semistable, noncrystalline case}

In this case, there exists a basis $\underline{\eta}$ so that $\alpha
=p\delta\boldsymbol{\noindent}$ and $[N]_{\underline{\eta}}=\left(
\begin{array}
[c]{cc}%
\vec{0} & \vec{0}\\
\vec{1} & \vec{0}%
\end{array}
\right)  .\ $Moreover,

\begin{itemize}
\item The characters $\chi_{1}$ and $\chi_{2}$ are equal;

\item The submodules fixed by Frobenius and the monodromy are $0,$ $D$ and
$D_{2};$
\end{itemize}

The filtered $(\varphi,N,L/K)$-module$\ D$ is weakly admissible if and only if
\noindent%
\begin{equation}
2efv_{p}(\delta)+ef=\sum\limits_{i\in I_{0}}k_{i}\ \text{and}\ efv_{p}%
(\delta)\geq\sum\limits_{\{i\in I_{0}:\ x_{i}=0\}}k_{i}. \label{2}%
\end{equation}
\noindent Assuming that $D$ is weakly admissible, it is reducible, non-split
if and only if the inequality in (\ref{2}) is equality.$\ \noindent$In this
case, the only nontrivial weakly admissible submodule stable under Frobenius
and the monodromy is $D_{2}.$ In any other case $D$ is irreducible.

\noindent The corresponding potentially semistable representation is a special series.

\subsection{The \textnormal{F}-scalar case}

There exists an ordered basis $\underline{\eta}$ of $D$ over $E^{\mid
\mathcal{S}_{L_{0}}\mid}$ such that $[\varphi]_{\underline{\eta}}=$
diag$(\alpha\cdot\vec{1},\alpha\cdot\vec{1})$ with $\alpha\in E^{\times}.$

\begin{itemize}
\item The monodromy operator $N$ is trivial;

\item There exists a group homomorphism $\lambda:G\rightarrow GL_{2}(E)$ such that

$[g]_{\underline{\eta}}=\lambda(g)\cdot$diag$(\vec{1},\vec{1})$ for all $g\in
G;$

\item The Galois-stable filtrations are as in the non-\textnormal{F}-scalar case
with
\[
\vec{x}=\sum\limits_{j=1}^{\nu}\left\{  \sum\limits_{g\in G}x_{\pi(g)(i_{j}%
)}\cdot e_{\pi(g)(i_{j})}\right\}  ,\text{\ }\ \vec{y}=\sum\limits_{j=1}^{\nu
}\left\{  \sum\limits_{g\in G}y_{\pi(g)(i_{j})}\cdot e_{\pi(g)(i_{j}%
)}\right\}  ,
\]
where $\left(  x_{\pi(g)(i_{j})},y_{\pi(g)(i_{j})}\right)  =\left(  x_{i_{j}%
},y_{i_{j}}\right)  \cdot\lambda(g^{-1})$ for all $g\in G;$

\item The Frobenius-stable submodules are $0,\ D,$ $D_{1},$ $D_{2},$ with
$D_{1}$ and $D_{2}$ as in the previous cases, and $D_{\theta}=(E^{\mid
\mathcal{S}_{L_{0}}\mid})(\eta_{1}+\theta\cdot\vec{1}\eta_{2})$ for any
$\theta\in E^{\times}.$
\end{itemize}

\noindent For each $c\in E^{\times},$ let $k(c):=\sum\limits_{\{i\in
J_{\vec{x}}\cap J_{\vec{y}}:\ x_{i}^{-1}y_{i}=c\}}k_{i},$ where $x_{i}$ and
$y_{i}$ are the coordinates of the vectors $\vec{x}$ and $\vec{y}.\ $Let $k$
be the maximum of the integers $k(c).$ The filtered $\varphi$-module $D$ is
weakly admissible if and only if%
\begin{equation}%
\begin{array}
[c]{c}%
\text{(i)\ }2efv_{p}(\alpha)=\sum\limits_{i\in I_{0}}k_{i},\ \text{(ii)}%
\ efv_{p}(\alpha)\geq\sum\limits_{\{i\in I_{0}:\ y_{i}=0\}}k_{i},\ \\
\\
\text{(iii)}\ efv_{p}(\alpha)\geq\sum\limits_{\{i\in I_{0}:\ x_{i}=0\}}%
k_{i},\ \text{and}\ \text{(iv)}\ efv_{p}(\alpha)\geq k.
\end{array}
\label{3}%
\end{equation}

Assuming that $D$ is weakly admissible,

\begin{enumerate}
\item It is irreducible if and only if all inequalities (ii),\ (iii) and (iv)
in (\ref{3}) are strict.

\item It is reducible, non-split if and only if either exactly one of the
inequalities (ii) and (iii) is equality and inequality (iv) is strict, or both
inequalities (ii) and (iii) above are strict, inequality (iv) is equality and
the maximum is attained for precisely one constant $c.$ The only $\varphi
$-stable weakly admissible submodules are $D_{1},$ $D_{2}$ and $D_{c}$ respectively.

\item It is split-reducible if and only if either \noindent$x_{i}^{-1}y_{i}%
\ $is a constant $c$ for all $i\in I_{0}^{+}\cap J_{\vec{x}}\cap J_{\vec{y}}$
(including the case$\ I_{0}^{+}\cap J_{\vec{x}}\cap J_{\vec{y}}=\varnothing$
in which we define $c=0$) and one of the inequalities (ii) and (iii) above is
equality, or there exist two distinct constants $c_{1},$ $c_{2}$ such that
$k(c_{1})=k(c_{2}).$ The only weakly admissible submodules are $D_{1}$ and
$D_{c},$ or $D_{2}$ and $D_{c},\ $or $D_{c_{1}}$ and $D_{c_{2}}$ respectively,
and all these pairs of submodules are complementary in $D.$\noindent
\end{enumerate}

\noindent The corresponding potentially crystalline representation is
supercuspidal or principal series, depending on $\lambda.$

\subsection{The non-\textnormal{F}-semisimple case}

There exists an ordered basis $\underline{\eta}$ of $D$ over $E^{\mid
\mathcal{S}_{L_{0}}\mid}$ such that $[\varphi]_{\underline{\eta}}=\left(
\begin{array}
[c]{cc}%
\alpha\cdot\vec{1} & \vec{0}\\
\vec{1} & \alpha\cdot\vec{1}%
\end{array}
\right)  ,$ with $\alpha\in E^{\times}.$ In this case the monodromy operator
$N$ is trivial.

\begin{itemize}
\item The Galois action is given by $[g]_{\underline{\eta}}=$ diag$(\chi
(g)\cdot\vec{1},\chi(g)\cdot\vec{1})$ for some character $\chi:G\rightarrow
E^{\times},$ and the $G$-stable filtrations are as in the F-semisimple,
non-scalar case;

\item The Frobenius-fixed submodules are $0,$ $D,$ $D_{2};$
\end{itemize}

The filtered $\varphi$-module$\ D$ is weakly admissible if and only if
\noindent%
\begin{equation}
2efv_{p}(\alpha)+ef=\sum\limits_{i\in I_{0}}k_{i}\ \text{and}\ efv_{p}%
(\alpha)\geq\sum\limits_{\{i\in I_{0}:\ x_{i}=0\}}k_{i}. \label{4}%
\end{equation}
\noindent\noindent Assuming that $D$ is weakly admissible, it is reducible,
non-split if and only if the inequality in (\ref{4}) is equality.$\ \noindent
$In this case, the only nontrivial weakly admissible submodule is $D_{2}.$ In
any other case $D$ is irreducible.

\noindent The corresponding potentially crystalline representation is a
principal series.

\section{Isomorphism classes\label{isomorphism classes}}

\noindent\noindent Let $(D_{i},\varphi_{i},N_{i}),$ $i=1,2\,$be isomorphic
filtered $\left(  \varphi,N,L/K,E\right)  $-modules with labeled Hodge-Tate
weights $(\{-k_{\sigma},0\})_{\sigma},$ where $k_{\sigma}$ are non negative
integers. Let $\underline{\eta}^{i}=(\eta_{1}^{i},\eta_{2}^{i}),$ $i=1,2$ be
standard bases and let $h:D_{1}\rightarrow D_{2}$ be an isomorphism. We denote
by $[h]_{\underline{\eta}^{1}}^{\underline{\eta}^{2}}\ $the matrix of $h$ with
respect to the bases $\underline{\eta}^{i}$ and by $[h]_{1\otimes
\underline{\eta}^{1}}^{1\otimes\underline{\eta}^{2}} $ the matrix of
$h_{L}=1_{L\otimes_{%
\mathbb{Q}
_{p}}E}\otimes h$ with respect to the bases $1\otimes\underline{\eta}^{i}.$ If
all the weights $k_{\sigma}$ equal zero, compatibility of $h$ with the
filtrations holds trivially and the corresponding sections should be ignored.

\subsection{The F-semisimple, non-scalar case}

Let $[\varphi_{i}]_{\underline{\eta}^{i}}=$ diag$(\alpha_{i}\cdot\vec
{1},\delta_{i}\cdot\vec{1}),\ $with $\alpha_{i}^{f}\neq\delta_{i}^{f}\ $and
$\alpha_{i}=p\delta_{i}\neq0$ if the monodromy operators are nontrivial. In
the next proposition we determine when the isomorphism $h$ commutes with the
Frobenius operators. We write $Q=[h]_{\underline{\eta}^{1}}^{\underline{\eta
}^{2}}=\left(
\begin{array}
[c]{ll}%
\vec{a} & \vec{b}\\
\vec{c} & \vec{d}%
\end{array}
\right)  ,$ and by Section \ref{action F} it is clear that $\left(
[h_{L}]_{1\otimes\underline{\eta}^{1}}^{1\otimes\underline{\eta}^{2}}\right)
=Q^{\otimes e}=\left(
\begin{array}
[c]{ll}%
\vec{a}^{\otimes e} & \vec{b}^{\otimes e}\\
\vec{c}^{\otimes e} & \vec{d}^{\otimes e}%
\end{array}
\right)  =:\left(
\begin{array}
[c]{ll}%
\vec{a}_{1} & \vec{b}_{1}\\
\vec{c}_{1} & \vec{d}_{1}%
\end{array}
\right)  .$

\begin{proposition}
\label{h matrix}The isomorphism $h$ commutes with Frobenius endomorphisms if
and only if either

\begin{enumerate}
\item $\alpha_{1}^{f}=\alpha_{2}^{f}$ and $\delta_{1}^{f}=\delta_{2}^{f}, $ in
which case $[h]_{\underline{\eta}^{1}}^{\underline{\eta}^{2}}=$ diag$(a\cdot
\vec{a}_{0},d\cdot\vec{d}_{0}),$ where $\vec{a}_{0}=(1,\mu_{1},\mu_{1}%
^{2},...,\mu_{1}^{f-1}),$ $\vec{d}_{0}=(1,\mu_{2},\mu_{2}^{2},...,\mu
_{2}^{f-1}),$ with $\mu_{1}=\frac{\alpha_{1}}{\alpha_{2}},$ $\mu_{2}%
=\frac{\delta_{1}}{\delta_{2}}$ and $a,d\in E^{\times},\ $or

\item $\alpha_{1}^{f}=\delta_{2}^{f}$ and $\delta_{1}^{f}=\alpha_{2}^{f},$ in
which case $[h]_{\underline{\eta}^{1}}^{\underline{\eta}^{2}}$ $=\left(
\begin{array}
[c]{ll}%
\ \ \vec{0} & b\cdot\vec{b}_{0}\\
c\cdot\vec{c}_{0} & \ \ \ \vec{0}%
\end{array}
\right)  ,$ where $\vec{b}_{0}=(1,\xi_{1},\xi_{1}^{2},...,\xi_{1}^{f-1}),$
$\vec{c}_{0}=(1,\xi_{2},\xi_{2}^{2},...,\xi_{2}^{f-1}),$ with $\xi_{1}=$
$\frac{\delta_{1}}{\alpha_{2}},\ \xi_{2}=\frac{\alpha_{1}}{\delta_{2}}$ and
$b,c\in E^{\times}.$
\end{enumerate}
\end{proposition}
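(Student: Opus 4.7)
The plan is to translate the compatibility $h\circ\varphi_1=\varphi_2\circ h$ into a matrix identity and then apply Lemma \ref{norm lemma}(2) entry by entry. Writing $Q=[h]_{\underline{\eta}^{1}}^{\underline{\eta}^{2}}$, since $h$ is $E^{\mid\mathcal{S}_{L_{0}}\mid}$-linear while the $\varphi_{i}$ are $\varphi$-semilinear, a direct basis computation yields the identity $Q\cdot\lbrack\varphi_{1}]_{\underline{\eta}^{1}}=[\varphi_{2}]_{\underline{\eta}^{2}}\cdot\varphi(Q)$. Substituting $[\varphi_{i}]_{\underline{\eta}^{i}}=\mathrm{diag}(\alpha_{i}\cdot\vec{1},\delta_{i}\cdot\vec{1})$ and equating entries produces the four equations
\begin{align*}
\alpha_{1}\vec{a} &= \alpha_{2}\varphi(\vec{a}), & \delta_{1}\vec{b} &= \alpha_{2}\varphi(\vec{b}), \\
\alpha_{1}\vec{c} &= \delta_{2}\varphi(\vec{c}), & \delta_{1}\vec{d} &= \delta_{2}\varphi(\vec{d}),
\end{align*}
each of the form $\vec{\mu}\cdot\vec{x}=\vec{\nu}\cdot\varphi(\vec{x})$ to which Lemma \ref{norm lemma}(2) applies directly.

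Step two is to read off the criterion for a nonzero solution — equality of the corresponding norms $\alpha_{1}^{f}=\alpha_{2}^{f}$, $\delta_{1}^{f}=\alpha_{2}^{f}$, $\alpha_{1}^{f}=\delta_{2}^{f}$, $\delta_{1}^{f}=\delta_{2}^{f}$ respectively — together with the explicit shape of that solution. The lemma immediately produces $\vec{a}=a\cdot\vec{a}_{0}$ with ratio $\mu_{1}=\alpha_{1}/\alpha_{2}$, $\vec{d}=d\cdot\vec{d}_{0}$ with $\mu_{2}=\delta_{1}/\delta_{2}$, $\vec{b}=b\cdot\vec{b}_{0}$ with $\xi_{1}=\delta_{1}/\alpha_{2}$, and $\vec{c}=c\cdot\vec{c}_{0}$ with $\xi_{2}=\alpha_{1}/\delta_{2}$, matching exactly the tuples of geometric progressions in the statement.

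The final step is a short combinatorial case analysis using the non-degeneracy hypotheses $\alpha_{i}^{f}\neq\delta_{i}^{f}$ for $i=1,2$ together with the invertibility of $Q$. Because $\alpha_{1}^{f}\neq\delta_{1}^{f}$, the conditions $\alpha_{1}^{f}=\alpha_{2}^{f}$ and $\delta_{1}^{f}=\alpha_{2}^{f}$ cannot hold simultaneously, so at most one of $\vec{a},\vec{b}$ is nonzero; the same argument applied to $\alpha_{2}^{f}\neq\delta_{2}^{f}$ handles the pair $\vec{a},\vec{c}$, and symmetrically for $\vec{b},\vec{d}$ and $\vec{c},\vec{d}$. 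Invertibility of $Q$ then forces each row and each column to contain a nonzero entry, which funnels the possibilities into exactly two shapes: either $\vec{b}=\vec{c}=\vec{0}$ with $\vec{a},\vec{d}$ nonzero, which requires $\alpha_{1}^{f}=\alpha_{2}^{f}$ and $\delta_{1}^{f}=\delta_{2}^{f}$ and yields case (1) with $a,d\in E^{\times}$; or $\vec{a}=\vec{d}=\vec{0}$ with $\vec{b},\vec{c}$ nonzero, which requires $\alpha_{1}^{f}=\delta_{2}^{f}$ and $\delta_{1}^{f}=\alpha_{2}^{f}$ and yields case (2) with $b,c\in E^{\times}$. The computation itself is routine; the only delicate point is the bookkeeping that sets the invertibility of $Q$ against the four norm-equality conditions and the two non-degeneracy hypotheses, which is precisely what rules out all mixed solutions and leaves only the diagonal and anti-diagonal alternatives.
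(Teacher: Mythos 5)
Your proof is correct and follows essentially the same route as the paper: translate $h\circ\varphi_1=\varphi_2\circ h$ into the matrix identity $[\varphi_2]_{\underline\eta^2}\varphi(Q)=Q[\varphi_1]_{\underline\eta^1}$, split it into the four scalar recursions, apply Lemma~\ref{norm lemma}(2) to each to get both the norm-equality criterion and the geometric-progression shape of the solution, and finally use the hypotheses $\alpha_i^f\neq\delta_i^f$ together with invertibility of $Q$ to rule out all but the diagonal and anti-diagonal configurations. The paper organizes the final step slightly differently (it argues $\alpha_1^f\in\{\alpha_2^f,\delta_2^f\}$ and $\delta_1^f\in\{\alpha_2^f,\delta_2^f\}$ from the nonvanishing of each column of $Q$, then splits into cases), but this is the same bookkeeping you carry out pairwise; the substance is identical.
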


\begin{proof}
We need $([\varphi_{2}]_{\underline{\eta}^{2}})\cdot\varphi(Q)=Q\cdot
([\varphi_{1}]_{\underline{\eta}^{1}})$, or equivalently $\alpha_{1}\vec
{a}=\alpha_{2}\varphi(\vec{a}),\delta_{1}\vec{b}=\alpha_{2}\varphi(\vec
{b}),\alpha_{1}\vec{c}=\delta_{2}\varphi(\vec{c})\ $and$\ \delta_{1}\vec
{d}=\delta_{2}\varphi(\vec{d}).$If $\alpha_{1}^{f}\not \in \{\alpha_{2}%
^{f},\delta_{2}^{f}\},$ then Lemma \ref{norm lemma} implies $\vec{a}=\vec
{c}=\vec{0}$ a contradiction. Hence $\alpha_{1}^{f}\in\{\alpha_{2}^{f}%
,\delta_{2}^{f}\},$ and similarly $\delta_{1}^{f}\in\{\alpha_{2}^{f}%
,\delta_{2}^{f}\}.$ Since $\alpha_{i}^{f}\neq\delta_{i}^{f}$ for $i=1,2$ we
have the following cases: Case\ (1). If $\alpha_{1}^{f}=\alpha_{2}^{f}$ and
$\delta_{1}^{f}=\delta_{2}^{f}.$ By Lemma \ref{norm lemma}, $Q=$ diag$(\vec
{a},\vec{d}),$ where $\vec{a}=a(1,\mu_{1},\mu_{1}^{2},...,\mu_{1}^{f-1}),$
$\vec{d}=d(1,\mu_{2},\mu_{2}^{2},...,\mu_{2}^{f-1})$ with $\mu_{1}%
=\frac{\alpha_{1}}{\alpha_{2}},$ $\mu_{2}=\frac{\delta_{1}}{\delta_{2}}$ and
$a,d\in E^{\times}.\ $Case\ (2). If $\alpha_{1}^{f}=\delta_{2}^{f}$ and
$\delta_{1}^{f}=\alpha_{2}^{f}.$ Arguing as in Case (1), $Q=\left(
\begin{array}
[c]{ll}%
\vec{0} & \vec{b}\\
\vec{c} & \vec{0}%
\end{array}
\right)  ,$ with $\vec{b}=b(1,\xi_{1},\xi_{1}^{2},...,\xi_{1}^{f-1}),$
$\vec{c}=c(1,\xi_{2},\xi_{2}^{2},...,\xi_{2}^{f-1}),$ where $\xi_{1}=$
$\frac{\delta_{1}}{\alpha_{2}},\ \xi_{2}=\frac{\alpha_{1}}{\delta_{2}}$ and
$b,c\in E^{\times}.$
\end{proof}

\noindent We now determine when $h$ commutes with the monodromy operators.

\begin{proposition}
\ \label{commutativity with monodromies}The isomorphism $h$ commutes with the
monodromy operators if and only if either both the monodromies are trivial or
the matrix $[h]_{\underline{\eta}^{1}}^{\underline{\eta}^{2}}\ $is as in Case
(1) of Proposition \ref{h matrix}, $a=d\ $and $\alpha_{1}\delta_{2}=\alpha
_{2}\delta_{1}.$
\end{proposition}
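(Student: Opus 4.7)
The plan is to translate the commutation relation $h N_1 = N_2 h$ into the matrix identity $[h]_{\underline{\eta}^1}^{\underline{\eta}^2}\,[N_1]_{\underline{\eta}^1} = [N_2]_{\underline{\eta}^2}\,[h]_{\underline{\eta}^1}^{\underline{\eta}^2}$, and then to analyse the two possible shapes for $[h]_{\underline{\eta}^1}^{\underline{\eta}^2}$ provided by Proposition \ref{h matrix}. The ``both monodromies trivial'' direction is automatic; the real content is the case where at least one $N_i$ is nonzero, and here we are helped by the fact that our standard bases (cf.\ Corollary \ref{simplify N} and the comment after it) are normalized so that $\alpha_i=p\delta_i$ and $[N_i]_{\underline{\eta}^i}=\left(\begin{smallmatrix}\vec{0}&\vec{0}\\ \vec{1}&\vec{0}\end{smallmatrix}\right)$.

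First, I would rule out the possibility that exactly one of the monodromies is nontrivial. If $N_1=0$ and $N_2\neq 0$, the commutation becomes $[N_2][h]=0$; plugging in either shape of $[h]$ from Proposition \ref{h matrix} produces a nonzero lower row (namely $a\vec a_0$ in Case $(1)$ or $b\vec b_0$ in the $(2,2)$ block in Case $(2)$), contradicting $a,b\in E^\times$ and the fact that $\vec a_0,\vec b_0$ have first coordinate $1$. The symmetric argument handles the reverse. Hence, once one monodromy is nontrivial, both are.

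Next I would eliminate Case $(2)$ of Proposition \ref{h matrix} when $N_1,N_2\neq 0$. Case $(2)$ requires $\alpha_1^f=\delta_2^f$ and $\delta_1^f=\alpha_2^f$. But $\alpha_i=p\delta_i$ gives $\alpha_i^f=p^f\delta_i^f$, so
\[
\delta_2^f=\alpha_1^f=p^f\delta_1^f=p^f\alpha_2^f\cdot(p^f)^{-1}\cdot p^f = p^{2f}\delta_2^f,
\]
forcing $p^{2f}=1$, impossible. Thus $[h]_{\underline{\eta}^1}^{\underline{\eta}^2}$ must be in Case $(1)$, i.e.\ $[h]=\mathrm{diag}(a\vec a_0,d\vec d_0)$ with $\vec a_0=(1,\mu_1,\dots,\mu_1^{f-1})$, $\vec d_0=(1,\mu_2,\dots,\mu_2^{f-1})$, $\mu_1=\alpha_1/\alpha_2$, $\mu_2=\delta_1/\delta_2$.

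Finally, a direct computation gives
\[
[h][N_1]=\begin{pmatrix}\vec{0}&\vec{0}\\ d\vec d_0 & \vec{0}\end{pmatrix},\qquad
[N_2][h]=\begin{pmatrix}\vec{0}&\vec{0}\\ a\vec a_0 & \vec{0}\end{pmatrix},
\]
so commutation amounts to $a\vec a_0=d\vec d_0$. Comparing first coordinates yields $a=d$; then comparing any subsequent coordinate (say the second) gives $\mu_1=\mu_2$, equivalently $\alpha_1\delta_2=\alpha_2\delta_1$. Conversely these two conditions make $\vec a_0=\vec d_0$ and the identity holds. This yields the stated characterisation. The only subtle step — and therefore the main obstacle — is ruling out Case $(2)$ of Proposition \ref{h matrix} in the nontrivial-monodromy setting; once that is done, the rest is a short direct matrix computation.
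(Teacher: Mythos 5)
Your proof is correct and follows essentially the same route as the paper, which simply invokes ``a straightforward computation using Corollary \ref{simplify N} and Proposition \ref{h matrix}''; you have carried out exactly that computation in full. The one genuinely load-bearing step you make explicit --- that Case $(2)$ of Proposition \ref{h matrix} is incompatible with the normalization $\alpha_i = p\delta_i$ forced by nontrivial monodromy, since it would give $p^{2f}=1$ --- is precisely the hidden content of the paper's ``straightforward'' claim, and your matrix computation for Case $(1)$ then reduces the condition to $a\vec a_0 = d\vec d_0$, i.e.\ $a=d$ and $\alpha_1\delta_2=\alpha_2\delta_1$, as required.
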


\begin{proof}
Clearly the monodromy operator of one of the filtered modules is trivial if
and only if the monodromy operator of the other is.\ The monodromy operators
commute with $h$ if and only if $\left(  [h]_{\underline{\eta}^{1}%
}^{\underline{\eta}^{2}}\right)  [N_{1}]_{\underline{\eta}^{1}}=[N_{2}%
]_{\underline{\eta}^{2}}\left(  [h]_{\underline{\eta}^{1}}^{\underline{\eta
}^{2}}\right)  .$ The proposition follows\ by a straightforward computation
using Corollary \ref{simplify N} and Proposition \ref{h matrix}.
\end{proof}

\begin{proposition}
\noindent Let $[g]_{\underline{\eta}^{1}}=$ diag$(\chi_{1}(g)\cdot\vec{1}%
,\chi_{2}(g)\cdot\vec{1})$ and $[g]_{\underline{\eta}^{2}}=$ diag$(\psi
_{1}(g)\cdot\vec{1},\psi_{2}(g)\cdot\vec{1}).$

$(1)$\ If the matrix of $h$ is as in Case $(1)$ of Proposition \ref{h matrix},
then $h$ commutes with the Galois actions if and only if $\chi_{1}(g)=\mu
_{1}^{n(g)}\psi_{1}(g)$ and $\chi_{2}(g)=\mu_{2}^{n(g)}\psi_{2}(g)$ for all
$g\in G.$

$(2)$ If the matrix of $h$ is as in Case $(2)$ of Proposition \ref{h matrix},
then $h$ commutes with the Galois actions if and only if $\chi_{1}(g)=\xi
_{2}^{n(g)}\psi_{2}(g)$ and $\chi_{2}(g)=\xi_{1}^{n(g)}\psi_{1}(g)$ for all
$g\in G.$
\end{proposition}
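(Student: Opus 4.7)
The plan is to translate the commutation requirement $h \circ g = g \circ h$ into a matrix identity and then exploit the explicit form of the intertwining matrix given by Proposition \ref{h matrix}. Writing $Q = [h]_{\underline{\eta}^{1}}^{\underline{\eta}^{2}}$, semilinearity of the Galois action (together with $E^{\mid\mathcal{S}_{L_{0}}\mid}$-linearity of $h$) gives, exactly as in the proof of Proposition \ref{g commutes with frob}, the equivalent condition
\[
Q \cdot [g]_{\underline{\eta}^{1}} \;=\; [g]_{\underline{\eta}^{2}} \cdot \bigl({}^{g}Q\bigr) \quad \text{for all } g \in G.
\]
Everything then reduces to computing how the cyclic shift $\vec{v} \mapsto {}^{g}\vec{v} = (v_{n(g)},\dots,v_{n(g)+f-1})$ of Section \ref{action F_0} acts on the specific vectors $\vec{a}_{0}, \vec{d}_{0}, \vec{b}_{0}, \vec{c}_{0}$ appearing in Proposition \ref{h matrix}.

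The crucial observation is that each of these vectors is a geometric progression $(1,\mu,\mu^{2},\dots,\mu^{f-1})$ whose ratio satisfies $\mu^{f}=1$. Indeed, in Case~(1) of Proposition \ref{h matrix} we have $\mu_{1}^{f} = \alpha_{1}^{f}/\alpha_{2}^{f} = 1$ and $\mu_{2}^{f} = \delta_{1}^{f}/\delta_{2}^{f} = 1$, and in Case~(2) the analogous $\xi_{j}^{f} = 1$ identities hold. Hence the cyclic shift collapses into a scalar:
\[
{}^{g}\vec{a}_{0} = \mu_{1}^{n(g)}\vec{a}_{0},\quad
{}^{g}\vec{d}_{0} = \mu_{2}^{n(g)}\vec{d}_{0},\quad
{}^{g}\vec{b}_{0} = \xi_{1}^{n(g)}\vec{b}_{0},\quad
{}^{g}\vec{c}_{0} = \xi_{2}^{n(g)}\vec{c}_{0}.
\]

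With this, the case analysis is a direct entry-by-entry comparison. In Case~(1), $Q$ is diagonal, so the matrix identity splits into two scalar equations on the diagonal; the off-diagonal entries vanish on both sides automatically. Substituting and cancelling the nonzero scalars $a$ and $d$ and the vectors $\vec{a}_{0}, \vec{d}_{0}$ yields $\chi_{1}(g) = \mu_{1}^{n(g)}\psi_{1}(g)$ and $\chi_{2}(g) = \mu_{2}^{n(g)}\psi_{2}(g)$. In Case~(2), $Q$ is antidiagonal, and the $(1,2)$ and $(2,1)$ entries of the matrix identity produce the symmetric conditions $\chi_{2}(g) = \xi_{1}^{n(g)}\psi_{1}(g)$ and $\chi_{1}(g) = \xi_{2}^{n(g)}\psi_{2}(g)$, while the $(1,1)$ and $(2,2)$ entries are trivially $\vec{0}=\vec{0}$.

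No serious obstacle is expected here: the content of the result is essentially the observation that a geometric vector with $\mu^{f}=1$ is an eigenvector for the cyclic shift with eigenvalue $\mu^{n(g)}$. The only place requiring a moment of care is checking the direction of the equality and confirming that $n(g_{1}g_{2}) \equiv n(g_{1}) + n(g_{2}) \pmod{f}$, which ensures compatibility with the multiplicativity of $\chi_{i}$ and $\psi_{i}$ so that the derived relations indeed define characters.
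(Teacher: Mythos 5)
Your proposal is correct and uses essentially the same approach as the paper: the paper's proof consists precisely of the equivalence $Q[g]_{\underline{\eta}^{1}}=[g]_{\underline{\eta}^{2}}\left({}^{g}Q\right)$ followed by the phrase ``a straightforward computation,'' which your argument carries out explicitly. Your key observation that a geometric vector $(1,\mu,\dots,\mu^{f-1})$ with $\mu^{f}=1$ is an eigenvector of the cyclic shift with eigenvalue $\mu^{n(g)}$ is exactly the content of that unstated computation and is verified correctly from the hypotheses $\alpha_{1}^{f}=\alpha_{2}^{f}$, $\delta_{1}^{f}=\delta_{2}^{f}$ (Case 1) and $\alpha_{1}^{f}=\delta_{2}^{f}$, $\delta_{1}^{f}=\alpha_{2}^{f}$ (Case 2).
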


\begin{proof}
A straightforward computation, using that the Galois actions commutes with $h$
if and only if $\left(  [h]_{\underline{\eta}^{1}}^{\underline{\eta}^{2}%
}\right)  [g]_{\underline{\eta}^{1}}=[g]_{\underline{\eta}^{2}}{}\left(
^{g}[h]_{\underline{\eta}^{1}}^{\underline{\eta}^{2}}\right)  .$
\end{proof}

\subsubsection{Compatibility with the
filtrations\label{compatibility with filtrations}}

Throughout this section we assume that at least one weight $k_{\sigma}$ is
positive. Suppose that for $i=1,2$ we have%

\[
\text{Fil}^{j}(D_{i,L})=\left\{
\begin{array}
[c]{l}%
\ \ \ \ \ \ \ \ \ \ \ \ D_{i,L}\text{ \ if\ \ \ }j\,\leq0,\\
\left(  E^{\mid S_{L}\mid_{I_{r}}}\right)  \left(  \vec{x}_{i}(1\otimes
\eta_{1})+\vec{y}_{i}(1\otimes\eta_{2})\right)  \text{\ if}\\
1+w_{r-1}\leq j\leq w_{r},\ \text{for}\ r=0,...,t-1,\\
\ \ \ \ \ \ \ \ \ \ \ \ \ 0\text{\ \ if \ \ }j\geq1+w_{t-1},
\end{array}
\right.
\]
We need
\begin{equation}
\noindent h_{L}(\text{Fil}^{j}D_{1,L})=\text{Fil}^{j}D_{2,L} \label{star}%
\end{equation}
for all $j$ and we\ have\ the\ following\ cases:$\ $\noindent(1)\textit{\ }If
$Q=$ diag$(\vec{a},\vec{d})$ is as in Case (1) of Proposition \ref{h matrix},
let $Q^{\otimes e}=$ diag$(\vec{a}_{1},\vec{d}_{1}),$ where $\vec{a}_{1}%
=\vec{a}^{\otimes e}$ and $\vec{d}_{1}=\vec{d}^{\otimes d}.$ Since $h_{L} $ is
$\left(  E^{\mid S_{L}\mid}\right)  $-linear. Condition $\noindent$%
(\ref{star}) is equivalent to
\[
\left(  E^{\mid S_{L}\mid}\right)  (f_{J_{\vec{x}_{1}}}\cdot\vec{x}_{1}%
\cdot\vec{a}_{1}(1\otimes\eta_{1}^{1})+f_{J_{\vec{y}_{1}}}\cdot\vec{d}%
_{1}(1\otimes\eta_{2}^{1}))=\left(  E^{\mid S_{L}\mid}\right)  (f_{J_{\vec
{x}_{2}}}\cdot\vec{x}_{2}\cdot((1\otimes\eta_{1}^{2})+f_{J_{\vec{y}_{2}}%
}((1\otimes\eta_{2}^{2})),
\]
and the latter equivalent to the system of equations
\begin{equation}
\ \ \ \text{(i)}\ \ \ \ \left\{
\begin{array}
[c]{c}%
f_{J_{\vec{x}_{1}}}\cdot\vec{a}_{1}\cdot\vec{x}_{1}=\vec{t}\cdot f_{J_{\vec
{x}_{2}}},\\
f_{J_{\vec{y}_{1}}}\cdot\vec{d}_{1}\cdot\vec{x}_{2}=\vec{t}\cdot f_{J_{\vec
{y}_{2}}},
\end{array}
\right\}  \ \ \ \ \text{and\ \ \ (ii)\ \ \ }\left\{
\begin{array}
[c]{c}%
f_{J_{\vec{x}_{2}}}=f_{J_{\vec{x}_{1}}}\cdot\vec{t}_{1}\cdot\vec{a}_{1},\\
f_{J_{\vec{y}_{2}}}=f_{J_{\vec{y}_{1}}}\cdot\vec{t}_{1}\cdot\vec{d}_{1},
\end{array}
\right\}  \ \label{5}%
\end{equation}
for some vectors $\vec{t},$ $\vec{t}_{1}\in E^{\mid S_{L}\mid}.$ We easily see
that (\ref{5}) implies$\ $%
\[
f_{J_{\vec{x}_{1}}\cap J_{\vec{y}_{2}}}\cdot\vec{a}_{1}\cdot\vec{x}%
_{1}=f_{J_{\vec{x}_{2}}\cap J_{\vec{y}_{1}}}\cdot\vec{d}_{1}\cdot\vec{x}_{2}.
\]
Since $\vec{a}_{1}\in\left(  E^{\times}\right)  ^{\mid S_{L}\mid},$ the first
equation of (\ref{5})(i) implies that $J_{\vec{x}_{1}}\subset J_{\vec{x}_{2}}$
and the first equation of (\ref{5})(ii) that $J_{\vec{x}_{2}}\subset
J_{\vec{x}_{1}},$ therefore $J_{\vec{x}_{1}}=J_{\vec{x}_{2}}.$

\noindent Similarly, since $\vec{d}_{1}\in\left(  E^{\times}\right)  ^{\mid
S_{L}\mid},$ we have $J_{\vec{y}_{1}}=J_{\vec{y}_{2}}.$ Conversely, if the
equations%
\[
J_{\vec{x}_{1}}=J_{\vec{x}_{2}};\ J_{\vec{y}_{1}}=J_{\vec{y}_{2}}%
\ \text{and}\ f_{J_{\vec{x}_{1}}\cap J_{\vec{y}_{2}}}\cdot\vec{a}_{1}\cdot
\vec{x}_{1}=f_{J_{\vec{x}_{2}}\cap J_{\vec{y}_{1}}}\cdot\vec{d}_{1}\cdot
\vec{x}_{2}%
\]
hold, then it is easy to see that the system of equations (\ref{5}) has
solutions in $\vec{t}$ and $\vec{t}_{1}.$ Hence, $h$ preserves the filtrations
if and only if
\begin{equation}
J_{\vec{x}_{1}}=J_{\vec{x}_{2}};\ J_{\vec{y}_{1}}=J_{\vec{y}_{2}}\ \text{and
}f_{J_{\vec{x}_{1}}\cap J_{\vec{y}_{1}}}\cdot\vec{a}_{1}\cdot\vec{x}%
_{1}=f_{J_{\vec{x}_{2}}\cap J_{\vec{y}_{2}}}\cdot\vec{d}_{1}\cdot\vec{x}_{2}
\label{6}%
\end{equation}
We have the following subcases:

\noindent(a)\ When the monodromies are trivial: In this case, the third
equation in (\ref{6}) can be replaced by
\begin{equation}
f_{J_{\vec{x}}\cap J_{\vec{y}}}\cdot\left(  \vec{a}_{0}\right)  ^{\otimes
e}\cdot\vec{x}_{1}=f_{J_{\vec{x}}\cap J_{\vec{y}}}\cdot(\vec{d}_{0})^{\otimes
e}\cdot\vec{x}_{2}\ \text{in\ the\ projective\ space\ }\mathbb{P}^{m-1}(E),
\label{n=0 or otherwise 1}%
\end{equation}
where $\vec{a}_{0}=(1,\mu_{1},\mu_{1}^{2},...,\mu_{1}^{f-1})$ and $\vec{d}%
_{0}=(1,\mu_{2},\mu_{2}^{2},...,\mu_{2}^{f-1}).$

\noindent Conversely, if $\alpha_{1}^{f}=\alpha_{2}^{f},$ $\delta_{1}%
^{f}=\delta_{2}^{f}$ and equation (\ref{n=0 or otherwise 1}) holds, then
(after scaling one of the vectors $\vec{a}_{0}$ or $\vec{d}_{0}$ if necessary)
$Q=\left(  [h]_{\bar{\eta}^{1}}^{\underline{\eta}^{2}}\right)  =$
diag$(\vec{a}_{0},\vec{d}_{0})$ defines an isomorphism of filtered $\left(
\varphi,N,L/K,E\right)  $-modules $h:(D_{1},\varphi_{1})\rightarrow
(D_{2},\varphi_{2}).$

\noindent(b)\ When the monodromies are nontrivial: By Proposition
\ref{commutativity with monodromies} we have $\vec{a}=\vec{d}$ and (\ref{6})
is equivalent to
\begin{equation}
J_{\vec{x}_{1}}=J_{\vec{x}_{2}};\ J_{\vec{y}_{1}}=J_{\vec{y}_{2}%
}\ \text{and\ }f_{J_{\vec{x}}\cap J_{\vec{y}}}\cdot\vec{x}_{1}=f_{J_{\vec{x}%
}\cap J_{\vec{y}}}\cdot\vec{x}_{2}. \label{tralala}%
\end{equation}
Conversely, if $\alpha_{1}^{f}=\alpha_{2}^{f},$ $\delta_{1}^{f}=\delta_{2}%
^{f},$ and $\alpha_{1}\delta_{2}=\alpha_{2}\delta_{1},$ if the monodromy
operators are non-trivial, and if equations (\ref{tralala}) hold, then the
$E^{\mid S_{L_{0}}\mid}$-linear map

\noindent$h:(D_{1},\varphi_{1})\rightarrow(D_{2},$ $\varphi_{2})$ defined by
$Q=[h]_{\bar{\eta}^{1}}^{\underline{\eta}^{2}}=$ diag$(\vec{a}_{0},\vec{a}%
_{0})$ is an isomorphism of filtered $\left(  \varphi,N,L/K,E\right)  $-modules.

\noindent(2)\ If $Q=\left(
\begin{array}
[c]{cc}%
\vec{0} & \vec{b}\\
\vec{c} & \vec{0}%
\end{array}
\right)  ,$ then both the monodromy operators are zero. Arguing before we see
that $h_{L}\ $preserves the filtrations if and only if%
\begin{equation}%
\begin{array}
[c]{c}%
J_{\vec{x}_{1}}=J_{\vec{y}_{2}};\ J_{\vec{y}_{1}}=J_{\vec{x}_{2}}%
\ \text{and}\\
\\
\ f_{J_{\vec{x}_{1}}\cap J_{\vec{y}_{1}}}\cdot(\vec{b}_{0})^{\otimes
e}=f_{J_{\vec{y}_{2}}\cap J_{\vec{x}_{2}}}\cdot\left(  \vec{c}_{0}\right)
^{\otimes e}\cdot\vec{x}_{1}\cdot\vec{x}_{2}\ \text{in\ }\mathbb{P}^{m-1}(E).
\end{array}
\label{lalala}%
\end{equation}
Conversely, if $\alpha_{1}^{f}=\delta_{2}^{f},$ $\delta_{1}^{f}=\alpha_{2}%
^{f}$ and equations (\ref{lalala}) hold, then the $E^{\mid S_{L_{0}}\mid}%
$-linear map $h:(D_{1},\varphi_{1})\rightarrow(D_{2},\varphi_{2})$ defined by
$Q=\left(  [h]_{\bar{\eta}^{1}}^{\underline{\eta}^{2}}\right)  =\left(
\begin{array}
[c]{ll}%
\vec{0} & \vec{b}_{0}\\
\vec{c}_{0} & \ \vec{0}%
\end{array}
\right)  $ (after scaling one of the vectors $\vec{b}_{0}$ or $\vec{c}_{0}$ if
necessary) is an isomorphism of filtered $\left(  \varphi,N,L/K,E\right)  $-modules.

\subsection{The F-scalar case\label{10}}

Suppose that
\[
\lbrack\varphi_{i}]_{\underline{\eta}^{i}}=\text{diag}(\alpha_{i}\cdot\vec
{1},\alpha_{i}\cdot\vec{1})\ \text{and\ }[g]_{\underline{\eta}^{i}}%
=\lambda_{i}(g)\cdot\text{diag}(\vec{1},\vec{1})
\]
for some group homomorphisms $\lambda_{i}:G\rightarrow GL_{2}(E),$ $i=1,2.$
Arguing as in the non-F-scalar case, one easily sees that an isomorphism $h$
commuting with Frobenius exists if and only if $\alpha_{1}^{f}=\alpha_{2}%
^{f}.$ Then, $Q=[h]_{\underline{\eta}^{1}}^{\underline{\eta}^{2}}=R\cdot
$diag$(\vec{1},\vec{1})$ for some $R\in GL_{2}(E),$ and $h$ commutes with the
Galois action if and only if $\lambda_{2}(g)=R\lambda_{1}(g)R^{-1}$ for all
$g\in G.$ Let $R=\left(
\begin{array}
[c]{cc}%
a & b\\
c & d
\end{array}
\right)  .$ Since $h_{L}$ is an $E^{\mid S_{L}\mid}$-linear isomorphism, it
preserves the filtrations if and only if $h_{L}\left(  \text{Fil}^{1}%
D_{1,L}\right)  =$ Fil$^{1}D_{2,L},$ or equivalently
\[
E^{\mid S_{L}\mid}\left(  a\cdot\vec{x}_{1}+b\cdot f_{J_{\vec{y}_{1}}}\right)
=E^{\mid S_{L}\mid}\vec{x}_{2}\ \text{and }E^{\mid S_{L}\mid}\left(
c\cdot\vec{x}_{1}+d\cdot f_{J_{\vec{y}_{1}}}\right)  =E^{\mid S_{L}\mid
}f_{J_{\vec{y}_{2}}}%
\]
which we write in assorted form as
\begin{equation}
\left(  E^{\mid S_{L}\mid}\right)  \left(  \vec{x}_{1},\ f_{J_{\vec{y}_{1}}%
}\right)  \cdot\left(  R\cdot\text{diag}(\vec{1},\vec{1})\right)  =\left(
\ E^{\mid S_{L}\mid}\right)  \left(  \vec{x}_{2},\ f_{J_{\vec{y}_{2}}}\right)
. \label{7}%
\end{equation}
Conversely, if $\alpha_{1}^{f}=\alpha_{2}^{f},$ if there exists some $R\in
GL_{2}(E)\ $such that

\noindent$\lambda_{2}(g)=R\lambda_{1}(g)R^{-1}$ for all $g\in G$ and (\ref{7})
holds, then the $E^{\mid S_{L_{0}}\mid}$-linear map
\[
h:D_{1}\rightarrow D_{2}\ \text{defined by }[h]_{\underline{\eta}^{1}%
}^{\underline{\eta}^{2}}=R\cdot\text{diag}(\vec{1},\vec{1})
\]
is an isomorphism of filtered $\left(  \varphi,N,L/K,E\right)  $-modules.

\subsection{The non-F-semisimple case}

Let
\[
\lbrack\varphi_{i}]_{\underline{\eta}^{i}}=\left(
\begin{array}
[c]{cc}%
\alpha_{i}\cdot\vec{1} & \vec{0}\\
\vec{1} & \alpha_{i}\cdot\vec{1}%
\end{array}
\right)  \ \text{and }[g]_{\underline{\eta}^{i}}=\text{diag}(\chi_{i}%
(g)\cdot\vec{1},\chi_{i}(g)\cdot\vec{1})
\]
for some characters $\chi_{i}:G\rightarrow E^{\times}.$ Let $Q=[h]_{\underline
{\eta}^{1}}^{\underline{\eta}^{2}}=\left(
\begin{array}
[c]{cc}%
\vec{a} & \vec{b}\\
\vec{c} & \vec{d}%
\end{array}
\right)  .$

\noindent The isomorphism $h$ commutes with the Frobenius endomorphisms if and
only if
\begin{equation}
([\varphi_{2}]_{\underline{\eta}^{2}})\cdot\varphi(Q)=Q\cdot([\varphi
_{1}]_{\underline{\eta}^{1}}). \label{8}%
\end{equation}
This implies that $Nm_{\varphi}([\varphi_{2}]_{\underline{\eta}^{2}})\cdot
Q=Q\cdot$ $Nm_{\varphi}([\varphi_{1}]_{\underline{\eta}^{1}}),$ and this
combined with Lemma \ref{norm lemma} that $\alpha_{1}^{f}=\alpha_{2}%
^{f},\ \vec{b}=\vec{0}$ and $\vec{a}=\vec{d}=a\cdot\left(  1,\frac{\alpha_{2}%
}{\alpha_{1}},\left(  \frac{\alpha_{2}}{\alpha_{1}}\right)  ^{2},...,\left(
\frac{\alpha_{2}}{\alpha_{1}}\right)  ^{f-1}\right)  $ for some $a\in
E^{\times}.$ Then by equation (\ref{8}), the coordinates of $\vec{c}$ satisfy%
\[
c_{i}=\mu_{1}^{i}\left\{  (c_{0}-a\mu_{1}^{-1}+a)-\sum\limits_{j=1}%
^{i-1}\left(  \mu_{1}^{-2j-1}-\mu_{1}^{-2j}\right)  \right\}  \ \text{for
}i=1,2,...,f-1,
\]
where $c_{0}\in E\ $is arbitrary. Arguing as in Section
\ref{compatibility with filtrations} we see that $h$ is preserves the
filtrations if and only if
\begin{equation}
J_{\vec{x}_{1}}=J_{\vec{x}_{2}}\ \text{and}\ f_{J_{\vec{x}}}\cdot\vec{x}%
\cdot\vec{x}_{1}\cdot\vec{c}^{\otimes{e}}=\left(  f_{J_{\vec{x}}\cap
J_{\vec{y}_{1}}}\cdot\vec{x}-f_{J_{\vec{x}}\cap J_{\vec{y}_{1}}}\cdot\vec
{x}_{1}\right)  \cdot\vec{a}^{\otimes{e}}. \label{hello!}%
\end{equation}
It is straightforward to see that $h$ commutes with the Galois actions if and
only if $\chi_{1}(g)=\mu_{1}^{n(g)}\cdot\chi_{2}(g)\ $and $^{g}\vec{c}=\mu
_{1}^{n(g)}\cdot\vec{c}$ for all $g.\ $The latter equation holds\ if and only
if either $\alpha_{1}=\alpha_{2},$ or $\sum\limits_{j=0}^{n(g)-1}\left(
\frac{\alpha_{2}}{\alpha_{1}}\right)  ^{2j}=0$ for all $g\in G.$ Conversely,
assume that $\alpha_{1}^{f}=\alpha_{2}^{f}$ and $\sum\limits_{j=0}%
^{n(g)-1}\left(  \frac{\alpha_{2}}{\alpha_{1}}\right)  ^{2j}=0$ for all $g\in
G,$ in case that $\alpha_{1}\neq\alpha_{2}.$ In addition, assume that
$\chi_{1}(g)=\mu_{1}^{n(g)}\cdot\chi_{2}(g)$ for all $g.$ If the first two
equations in (\ref{hello!}) hold and there exist $a\in E^{\times}$ and
$c_{0}\in E$ such that the third equation in (\ref{hello!}) holds, then the
$E^{\mid S_{L_{0}}\mid}$-linear map $h:D_{1}\rightarrow D_{2}\ $defined by
$[h]_{\underline{\eta}^{1}}^{\underline{\eta}^{2}}=\left(
\begin{array}
[c]{cc}%
\vec{a} & \vec{0}\\
\vec{c} & \vec{a}%
\end{array}
\right)  $ is an isomorphism of filtered $(\varphi,L/K,E)$-modules. We now
list the isomorphism classes of rank two filtered $\left(  \varphi
,N,L/K,E\right)  $-modules.

\subsection{\textbf{The list of isomorphism classes\label{last list}}}

Let $(D_{i},\varphi_{i},N_{i},L/K,E)$ be filtered modules with labeled
Hodge-Tate weights $(\{-k_{\sigma},0\})_{\sigma},$ with $k_{\sigma}$ non
negative integers. Let $\underline{\eta}^{i},$ $i=1,2,\ $be standard bases,
and suppose that the filtrations are given by
\[
\text{Fil}^{j}(D_{i,L})=\left\{
\begin{array}
[c]{l}%
\ \ \ \ \ \ \ \ \ \ \ \ D_{i,L}\text{ \ if\ \ \ }j\,\leq0,\\
\left(  E^{\mid S_{L}\mid_{I_{r}}}\right)  \left(  \vec{x}_{i}(1\otimes
\eta_{1}^{i})+\vec{y}_{i}(1\otimes\eta_{2}^{i})\right)  \text{\ if}\\
1+w_{r-1}\leq j\leq w_{r},\ \text{for}\ r=0,...,t-1,\\
\ \ \ \ \ \ \ \ \ \ \ \ \ 0\text{\ \ if \ \ }j\geq1+w_{t-1},
\end{array}
\right.
\]
for some vectors $\vec{x}_{i},\vec{y}_{i}\in E^{\mid S_{L}\mid}$ whose
coordinates do not vanish simultaneously. Throughout this section, any
equation involving the sets $J_{\vec{x}}$ and $J_{\vec{y}}$ should be ignored
if all the weights $k_{\sigma}$ equal zero. Recall the definition of $n(g)$
from Section \ref{action F_0}.

\subsubsection{The F-semisimple case}

Let$\ [\varphi_{i}]_{\underline{\eta}^{i}}=$ diag$(\alpha_{i}\cdot\vec
{1},\ \delta_{i}\cdot\vec{1})\ $with $\alpha_{i},\delta_{i}\in E^{\times}%
\ $such that $\alpha_{i}^{f}\neq\delta_{i}^{f}$ and $[g]_{\underline{\eta}%
^{1}}=$ diag$(\chi_{1}(g)\cdot\vec{1},\chi_{2}(g)\cdot\vec{1}),$
$[g]_{\underline{\eta}^{2}}=$ diag$(\psi_{1}(g)\cdot\vec{1},\psi_{2}%
(g)\cdot\vec{1})$ for some $E^{\times}$-valued characters $\chi_{i}\ $and
$\psi_{i}\ $of $G=$ Gal$(L/K).$ When the monodromy operators are nontrivial,
the bases are chosen so that $\alpha_{i}=p\delta_{i}$ and $[N_{i}%
]_{\underline{\eta}^{i}}=\left(
\begin{array}
[c]{cc}%
\vec{0} & \vec{0}\\
\vec{1} & \vec{0}%
\end{array}
\right)  .$

\subsubsection{The potentially crystalline
case\label{filtrations classif potentially crystalline}}

If both the monodromy operators are trivial, then $(D_{1},\varphi
_{1},L/K,E)\simeq(D_{2},\varphi_{2},L/K,E)$ if and only if either%

\[
\left\{
\begin{array}
[c]{c}%
\alpha_{1}^{f}=\alpha_{2}^{f}\\
\delta_{1}^{f}=\delta_{2}^{f}%
\end{array}
\right\}  ,\ \ \ \ \left\{
\begin{array}
[c]{c}%
J_{\vec{x}_{1}}=J_{\vec{x}_{2}}\\
J_{\vec{y}_{1}}=J_{\vec{y}_{2}}%
\end{array}
\right\}  ,~\ \ \ \left\{
\begin{array}
[c]{c}%
\chi_{1}(g)=\mu_{1}^{n(g)}\psi_{1}(g)\\
\chi_{2}(g)=\mu_{2}^{n(g)}\psi_{2}(g)\
\end{array}
\right\}
\]
for\ all$\ g\in G\ $and $~$%
\[
\vec{a}\cdot f_{J_{\vec{x}_{1}}\cap J_{\vec{y}_{1}}}\cdot\vec{x}_{1}=\vec
{d}\cdot f_{J_{\vec{x}_{2}}\cap J_{\vec{y}_{2}}}\cdot\vec{x}_{2}%
\ \text{in}\ \mathbb{P}^{m-1}(E),
\]
$\noindent$with$\ \vec{a}$\noindent$=\left(  1,\mu_{1},\mu_{1}^{2},...,\mu
_{1}^{f-1}\right)  ^{\otimes e}\ $and $\vec{d}=\left(  1,\mu_{2},\mu_{2}%
^{2},...,\mu_{2}^{f-1}\right)  ^{\otimes e},$ where $\mu_{1}=\frac{\alpha_{1}%
}{\alpha_{2}}$ and $\mu_{2}=\frac{\delta_{1}}{\delta_{2}},$ or%

\[
\left\{
\begin{array}
[c]{c}%
\alpha_{1}^{f}=\delta_{2}^{f}\\
\delta_{1}^{f}=\alpha_{2}^{f}%
\end{array}
\right\}  ,\ \ \ \ \left\{
\begin{array}
[c]{c}%
J_{\vec{x}_{1}}=J_{\vec{y}_{2}}\\
J_{\vec{y}_{1}}=J_{\vec{x}_{2}}%
\end{array}
\right\}  ,\ \ \ \ \left\{
\begin{array}
[c]{c}%
\chi_{1}(g)=\xi_{2}^{n(g)}\psi_{2}(g)\\
\chi_{2}(g)=\xi_{1}^{n(g)}\psi_{1}(g)\
\end{array}
\right\}
\]
for\ all\ $g\in G\ $and
\[
\vec{b}\cdot f_{J_{\vec{x}_{1}}\cap\ J_{\vec{y}_{1}}}=\vec{c}\cdot
f_{J_{\vec{x}_{1}}\cap\ J_{\vec{y}_{1}}}\cdot\vec{x}_{1}\cdot\vec{x}%
_{2}\ \text{in}\ \mathbb{P}^{m-1}(E)\noindent,
\]
with $\noindent$\noindent$\vec{b}=\left(  1,\xi_{1},\xi_{1}^{2},...,\xi
_{1}^{f-1}\right)  ^{\otimes e}$ and $\vec{c}=\left(  1,\xi_{2},\xi_{2}%
^{2},...,\xi_{2}^{f-1}\right)  ^{\otimes e},$ where \noindent$\xi_{1}=$
$\frac{\delta_{1}}{\alpha_{2}}$ and $\xi_{2}=\frac{\alpha_{1}}{\delta_{2}}.$

\subsubsection{The potentially semistable, noncrystalline case}

If both the monodromies are nontrivial, then \noindent$(D_{1},\varphi
_{1},N_{1},L/K,E)\simeq(D_{2},\varphi_{2},N_{2},L/K,E)$ if and only if%
\[
\left\{
\begin{array}
[c]{c}%
\alpha_{1}^{f}=\alpha_{2}^{f}\\
\alpha_{1}\delta_{2}=\alpha_{2}\delta_{1}%
\end{array}
\right\}  ,\ \ \ \ \left\{
\begin{array}
[c]{c}%
J_{\vec{x}_{1}}=J_{\vec{x}_{2}}\\
J_{\vec{y}_{1}}=J_{\vec{y}_{2}}%
\end{array}
\right\}  ,\ \ \ \left\{
\begin{array}
[c]{c}%
\chi_{1}(g)=\mu_{1}^{n(g)}\psi_{1}(g)\ \text{for\ all}\ g\in G\ \text{and}\\
f_{J_{\vec{x}_{1}}\cap\ J_{\vec{y}_{1}}}\cdot\vec{x}_{1}=f_{J_{\vec{x}_{1}%
}\cap\ J_{\vec{y}_{1}}}\cdot\vec{x}_{2}\ \text{in }\mathbb{A}^{m}(E)
\end{array}
\right\}  ,
\]
where$\ \mu_{1}=\frac{\alpha_{1}}{\alpha_{2}}.$

\subsubsection{The F-scalar case}

Let $[\varphi_{i}]_{\underline{\eta}^{i}}=$ diag$(\alpha_{i}\cdot\vec
{1},\alpha_{i}\cdot\vec{1})$ and $[g]_{\underline{\eta}^{i}}=\lambda
_{i}(g)\cdot$diag$(\vec{1},\vec{1})$ for some group homomorphisms $\lambda
_{i}:G\rightarrow GL_{2}(E),$ $i=1,2.$ Then
\[
(D_{1},\varphi_{1},L/K,E)\simeq(D_{2},\varphi_{2},L/K,E)
\]
if and only if $\alpha_{1}^{f}=\alpha_{2}^{f}$ and there exists some matrix
$R\in GL_{2}(E)\ $such that $\lambda_{2}(g)=R\lambda_{1}(g)R^{-1}$ for all $g$
and (with the notation of Section \ref{10})
\[
\left(  E^{\mid S_{L}\mid}\right)  \left(  \vec{x}_{1},\ f_{J_{\vec{y}_{1}}%
}\right)  \cdot\left(  R\cdot\text{diag}(\vec{1},\vec{1})\right)  =\left(
\ E^{\mid S_{L}\mid}\right)  \left(  \vec{x}_{2},\ f_{J_{\vec{y}_{2}}}\right)
.
\]

\subsubsection{The non-F-semisimple case}

Let
\[
\lbrack\varphi_{i}]_{\underline{\eta}^{i}}=\left(
\begin{array}
[c]{cc}%
\alpha_{i}\cdot\vec{1} & \vec{0}\\
\vec{1} & \alpha_{i}\cdot\vec{1}%
\end{array}
\right)  \ \text{with\ }\alpha_{i}\in E^{\times}\text{and\ }[g]_{\underline
{\eta}^{i}}=\text{diag}(\chi_{i}(g)\cdot\vec{1},\chi_{i}(g)\cdot\vec{1})
\]
for some characters $\chi_{i}:G\rightarrow E^{\times}.$ Then $(D_{1}%
,\varphi_{1},L/K,E)\simeq(D_{2},\varphi_{2},L/K,E)$ if and only if

\noindent(1) $\alpha_{1}^{f}=\alpha_{2}^{f}$ and in case that $\alpha_{1}%
\neq\alpha_{2},\ \sum\limits_{j=0}^{n(g)-1}\mu_{1}^{-2j}=0$ for all $g\in
G,\ $where$\ \mu_{1}=\frac{\alpha_{1}}{\alpha_{2}};$

\noindent\noindent(2)$\ \chi_{1}(g)=\mu_{1}^{n(g)}\cdot\chi_{2}(g)$ for all
$g\in G;$

\noindent(3)$\ J_{\vec{x}_{1}}=J_{\vec{x}_{2}}\ $and$\ $there exist $a\in
E^{\times}\ $and $c_{0}\in E\ $such that
\[
f_{J_{\vec{x}}}\cdot\vec{x}\cdot\vec{x}_{1}\cdot\vec{c}^{\otimes{e}}=\left(
f_{J_{\vec{x}}\cap J_{\vec{y}_{1}}}\cdot\vec{x}-f_{J_{\vec{x}}\cap J_{\vec
{y}_{1}}}\cdot\vec{x}_{1}\right)  \cdot\vec{a}^{\otimes{e}}\ \text{in
}\mathbb{A}^{m}(E),
\]
where $\vec{a}=a\cdot\left(  1,\mu_{1}^{-1},\mu_{1}^{-2},...,\mu_{1}%
^{-(f-1)}\right)  \ $and $\vec{c}=\left(  c_{0},c_{1},...,c_{f-1}\right)  $
with
\[
c_{i}=\mu_{1}^{i}\left\{  (c_{0}-a\mu_{1}^{-1}+a)-\sum\limits_{j=1}%
^{i-1}\left(  \mu_{1}^{-2j-1}-\mu_{1}^{-2j}\right)  \right\}  \ \text{for
}i=1,2,...,f-1.
\]

\section{Some consequences for crystalline
representations\label{crystalline example section}}

Let $K$ be any finite extension of $%
\mathbb{Q}
_{p}$ of absolute ramification index $e$ and absolute inertia degree $f.$ We
apply the results of the previous sections to study $2$-dimensional
crystalline $E$-representations of $G_{K}.$ Let $V$ be such a representation
and let $\left(  D,\varphi\right)  $ be the corresponding weakly admissible
filtered $\varphi$-module. Recall that the map $\varphi^{f}$ is $K_{0}\otimes
E$-linear. We call characteristic polynomial of $V\ $the characteristic
polynomial of $\varphi^{f},$ and throughout this section we assume that $V$ is
F-semisimple, meaning that $\varphi^{f}$ has the same property. Let
$\underline{\eta}\ $be a standard basis so that $[\varphi]_{\underline{\eta}%
}=$ diag$\left(  \alpha\cdot\vec{1},\delta\cdot\vec{1}\right)  $ with
$\alpha,\delta\in E^{\times}$ and $\alpha^{f}\neq\delta^{f},$ and let
\begin{equation}
\text{Fil}^{j}(D_{K})=\left\{
\begin{array}
[c]{l}%
\ \ \ \ \ \ \ \ \ \ \ \ D_{K}\text{ \ if\ \ \ }j\,\leq0,\\
\left(  E^{\mid S_{K}\mid_{I_{r}}}\right)  \left(  \vec{x}\cdot\eta_{1}%
+\vec{y}\cdot\eta_{2}\right)  \text{\ if}\\
1+w_{r-1}\leq j\leq w_{r},\ \text{for}\ r=0,...,t-1,\\
\ \ \ \ \ \ \ \ \ \ \ \ \ 0\text{\ \ if \ \ }j\geq1+w_{t-1}.
\end{array}
\right.  \label{filtrations in crystalline example}%
\end{equation}
for some vectors $\vec{x},\vec{y}\in E^{m},$ where $m$ is the degree of $K$
over $%
\mathbb{Q}
_{p},$ whose coordinates do not vanish simultaneously. In practice it is often
desirable to allow for a more flexible shape of Frobenius, at the cost of
adding extra rigidity to the filtrations. By Remark \ref{filtration} we may
assume that $\vec{y}=f_{J_{\vec{y}}},$ and by considering the ordered basis
$\underline{\zeta}=(\zeta_{1},\zeta_{2})$ with $\zeta_{1}=(\sum\limits_{i\in
J_{\vec{x}}^{^{\prime}}}e_{\tau_{i}}+\sum\limits_{i\in J_{\vec{x}}}x_{i}%
^{-1}e_{\tau_{i}})\eta_{1}$ and $\zeta_{2}=\eta_{2},$ we may further assume
that $\vec{x}=f_{J_{\vec{x}}}$ and $\vec{y}=f_{J_{\vec{y}}}.\ $In such a basis
the matrix of Frobenius remains diagonal of the form $[\varphi]_{\underline
{\zeta}}=$ diag$\left(  \vec{\alpha},\vec{\delta}\right)  $ for some vectors
$\vec{\alpha},\vec{\delta}\in\left(  E^{\times}\right)  ^{\mid S_{K_{0}}\mid}$
with $Nm_{\varphi}(\vec{\alpha})\neq Nm_{\varphi}(\vec{\delta}).$ The results
of Section \ref{filtrations classif potentially crystalline} take the form of
the following proposition.

\begin{proposition}
\label{last proposition}Let $(D_{i},\varphi_{i})$ be filtered $\varphi
$-modules with $[\varphi_{i}]_{\underline{\eta}^{i}}=$ diag$(\vec{\alpha}%
_{i},\vec{\delta}_{i}),$ $i=1,2$ and filtrations as in Section \ref{last list}%
, with $\vec{x}_{i}=f_{J_{\vec{x}_{i}}}$ and $\vec{y}_{i}=f_{J_{\vec{y}_{i}}%
},$ $i=1,2.$ The F-semisimple filtered $\varphi$-modules $(D_{i},\varphi_{i})$
are isomorphic if and only if either%
\[
\left\{
\begin{array}
[c]{c}%
Nm_{\varphi}(\vec{\alpha}_{1})=Nm_{\varphi}(\vec{\alpha}_{2}),\\
Nm_{\varphi}(\vec{\delta}_{1})=Nm_{\varphi}(\vec{\delta}_{2})
\end{array}
\right\}  ,\ \left\{
\begin{array}
[c]{c}%
J_{\vec{x}_{1}}=J_{\vec{x}_{2}},\\
J_{\vec{y}_{1}}=J_{\vec{y}_{2}}%
\end{array}
\right\}
\]
and $f_{J_{\vec{x}_{1}}\cap\ J_{\vec{y}_{1}}}\cdot\vec{a}=f_{J_{\vec{x}_{1}%
}\cap\ J_{\vec{y}_{1}}}\cdot\vec{d}\ $viewed in the projective space
$\mathbb{P}^{m-1}(E),$ \noindent where%
\[
\noindent\vec{a}=\left(  1,\frac{\alpha_{0}^{1}}{\alpha_{0}^{2}},\frac
{\alpha_{0}^{1}\alpha_{1}^{1}}{\alpha_{0}^{2}\alpha_{1}^{2}},\ldots
,\frac{\alpha_{0}^{1}\alpha_{1}^{1}\cdots\alpha_{f-2}^{1}}{\alpha_{0}%
^{2}\alpha_{1}^{2}\cdots\alpha_{f-2}^{2}}\right)  ^{\otimes e}\ \text{and}%
\noindent\ \vec{d}=\left(  1,\frac{\delta_{0}^{1}}{\delta_{0}^{2}}%
,\frac{\delta_{0}^{1}\delta_{1}^{1}}{\delta_{0}^{2}\delta_{1}^{2}}%
,\ldots,\frac{\delta_{0}^{1}\delta_{1}^{1}\cdots\delta_{f-2}^{1}}{\delta
_{0}^{2}\delta_{1}^{2}\cdots\delta_{f-2}^{2}}\right)  ^{\otimes e},
\]
or%
\[
\left\{
\begin{array}
[c]{c}%
Nm_{\varphi}(\vec{\alpha}_{1})=Nm_{\varphi}(\vec{\delta}_{2}),\\
Nm_{\varphi}(\vec{\delta}_{1})=Nm_{\varphi}(\vec{\alpha}_{2})
\end{array}
\right\}  ,\ \left\{
\begin{array}
[c]{c}%
J_{\vec{x}_{1}}=J_{\vec{y}_{2}},\\
J_{\vec{y}_{1}}=J_{\vec{x}_{2}}%
\end{array}
\right\}
\]
and $\noindent f_{J_{\vec{x}_{1}}\cap\ J_{\vec{y}_{1}}}\cdot\vec{b}%
=f_{J_{\vec{x}_{1}}\cap\ J_{\vec{y}_{1}}}\cdot\vec{c}\ $viewed in the
projective space $\mathbb{P}^{m-1}(E),\ $where%
\[
\noindent\vec{b}=\left(  1,\frac{\delta_{0}^{1}}{\alpha_{0}^{2}},\frac
{\delta_{0}^{1}\delta_{1}^{1}}{\alpha_{0}^{2}\alpha_{1}^{2}},\ldots
,\frac{\delta_{0}^{1}\delta_{1}^{1}\cdots\delta_{f-2}^{1}}{\alpha_{0}%
^{2}\alpha_{1}^{2}\cdots\alpha_{f-2}^{2}}\right)  ^{\otimes e}%
\ \text{and\noindent}\ \vec{c}=\left(  1,\frac{\alpha_{0}^{1}}{\delta_{0}^{2}%
},\frac{\alpha_{0}^{1}\alpha_{1}^{1}}{\delta_{0}^{2}\delta_{1}^{2}}%
,\ldots,\frac{\alpha_{0}^{1}\alpha_{1}^{1}\cdots\alpha_{f-2}^{1}}{\delta
_{0}^{2}\delta_{1}^{2}\cdots\delta_{f-2}^{2}}\right)  ^{\otimes e}.
\]
If all the $k_{i}\ $are $0,\ $any equation involving the sets $J_{\vec{x}_{i}%
},\ J_{\vec{y}_{i}}\ $should be ignored.$\ $
\end{proposition}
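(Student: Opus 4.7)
The plan is to adapt the proof of Section \ref{compatibility with filtrations} essentially verbatim, allowing the diagonal entries of Frobenius to be general vectors $\vec{\alpha}_i,\vec{\delta}_i$ rather than the fully normalized $\alpha_i\cdot\vec{1},\delta_i\cdot\vec{1}$. Under this relaxation, Lemma \ref{norm lemma} replaces every scalar-norm equality such as $\alpha_1^f=\alpha_2^f$ by its vector analogue $Nm_\varphi(\vec{\alpha}_1)=Nm_\varphi(\vec{\alpha}_2)$, and simultaneously produces explicit vectors in place of the geometric progressions $(1,\mu_1,\mu_1^2,\ldots)$ that appear in Section \ref{filtrations classif potentially crystalline}. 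Since $L=K$ here the Galois group is trivial and the character conditions of the earlier classification simply drop out.

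\textbf{Step 1 (Frobenius compatibility).} Let $h:D_1\to D_2$ be an isomorphism and write $Q=[h]_{\underline{\eta}^1}^{\underline{\eta}^2}=\begin{pmatrix}\vec{a}&\vec{b}\\ \vec{c}&\vec{d}\end{pmatrix}$. The relation $[\varphi_2]_{\underline{\eta}^2}\varphi(Q)=Q[\varphi_1]_{\underline{\eta}^1}$ decouples into four independent equations of the type handled by Lemma \ref{norm lemma}(2), namely $\vec{\alpha}_1\vec{a}=\vec{\alpha}_2\varphi(\vec{a})$, $\vec{\alpha}_1\vec{b}=\vec{\delta}_2\varphi(\vec{b})$, $\vec{\delta}_1\vec{c}=\vec{\alpha}_2\varphi(\vec{c})$, $\vec{\delta}_1\vec{d}=\vec{\delta}_2\varphi(\vec{d})$. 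Each admits a nonzero solution precisely when the two norms match, and the F-semisimple non-scalar hypothesis $Nm_\varphi(\vec{\alpha}_i)\neq Nm_\varphi(\vec{\delta}_i)$ combined with the invertibility of $Q$ forces exactly one of two configurations: either $Nm_\varphi(\vec{\alpha}_1)=Nm_\varphi(\vec{\alpha}_2)$ and $Nm_\varphi(\vec{\delta}_1)=Nm_\varphi(\vec{\delta}_2)$, in which case $\vec{b}=\vec{c}=\vec{0}$ and Lemma \ref{norm lemma}(2) gives $\vec{a},\vec{d}$ as scalar multiples of the vectors appearing in the first clause of the proposition; or the swapped case $Nm_\varphi(\vec{\alpha}_1)=Nm_\varphi(\vec{\delta}_2)$, $Nm_\varphi(\vec{\delta}_1)=Nm_\varphi(\vec{\alpha}_2)$, forcing $\vec{a}=\vec{d}=\vec{0}$ with $\vec{b},\vec{c}$ given by the analogous explicit formulas.

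\textbf{Step 2 (Filtration compatibility).} The filtration of $D_{i,K}$ is generated over $E^{|\mathcal{S}_K|}$ by $f_{J_{\vec{x}_i}}(1\otimes\eta_1^i)+f_{J_{\vec{y}_i}}(1\otimes\eta_2^i)$, and the matrix of $h_K=1\otimes h$ with respect to the extended bases is $Q^{\otimes e}$ by the identity $\xi_K(1\otimes\alpha)=\xi_{K_0}(\alpha)^{\otimes e}$ of Section \ref{action F}. Imposing $h_K(\mathrm{Fil}^jD_{1,K})=\mathrm{Fil}^jD_{2,K}$ and repeating the bookkeeping of Section \ref{compatibility with filtrations}, where the role formerly played by $\vec{a}_0^{\otimes e},\vec{d}_0^{\otimes e}$ is now played by the vectors $\vec{a},\vec{d}$ from Step 1 (whose components are units in $E$), one obtains $J_{\vec{x}_1}=J_{\vec{x}_2}$ and $J_{\vec{y}_1}=J_{\vec{y}_2}$ in Case (I), together with the projective equality $f_{J_{\vec{x}_1}\cap J_{\vec{y}_1}}\cdot\vec{a}=f_{J_{\vec{x}_1}\cap J_{\vec{y}_1}}\cdot\vec{d}$ in $\mathbb{P}^{m-1}(E)$ on the overlap. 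Case (II) is entirely parallel, with $J_{\vec{x}_1},J_{\vec{y}_1}$ swapped against $J_{\vec{y}_2},J_{\vec{x}_2}$ and with $\vec{b},\vec{c}$ in place of $\vec{a},\vec{d}$.

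For the converse, given the data of the proposition one constructs $Q$ explicitly via Lemma \ref{norm lemma}(2) and checks directly that the induced $E^{|\mathcal{S}_{K_0}|}$-linear map is an isomorphism of filtered $\varphi$-modules; the verifications are identical to those already carried out in Section \ref{compatibility with filtrations}. The main obstacle is thus purely notational: one must carefully track the passage between $Q$ and $Q^{\otimes e}$ and phrase the filtration condition as a single projective relation on the overlap $J_{\vec{x}_1}\cap J_{\vec{y}_1}$. No genuinely new argument is needed beyond recognizing that the derivation of Section \ref{compatibility with filtrations} goes through verbatim once Lemma \ref{norm lemma} is invoked in its full vector form.
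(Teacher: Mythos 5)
Your approach is correct and matches the paper's intent: the paper obtains Proposition \ref{last proposition} as a re-normalization of the results of Section \ref{filtrations classif potentially crystalline}, and you reproduce the same Frobenius- and filtration-compatibility analysis directly for diagonal matrices $\mathrm{diag}(\vec\alpha_i,\vec\delta_i)$, which leans on exactly the same ingredients (Lemma \ref{norm lemma}(2), the bookkeeping of Section \ref{compatibility with filtrations}, and the observation that the Galois descent data collapses when $L=K$). One small slip: when you write out the four entrywise equations from $[\varphi_{2}]_{\underline{\eta}^{2}}\varphi(Q)=Q[\varphi_{1}]_{\underline{\eta}^{1}}$, the off-diagonal equations should read $\vec{\delta}_{1}\vec{b}=\vec{\alpha}_{2}\varphi(\vec{b})$ and $\vec{\alpha}_{1}\vec{c}=\vec{\delta}_{2}\varphi(\vec{c})$, rather than with $\vec{b}$ and $\vec{c}$ interchanged; your version would give $\vec{b}$ and $\vec{c}$ swapped with respect to the proposition's explicit formulas, though this does not affect the final statement because (a) the combined norm conditions are the same and (b) the projective relation $f_{J_{\vec{x}_{1}}\cap J_{\vec{y}_{1}}}\cdot\vec{b}=f_{J_{\vec{x}_{1}}\cap J_{\vec{y}_{1}}}\cdot\vec{c}$ is symmetric under interchanging $\vec{b}$ and $\vec{c}$. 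With that index correction, the proof is sound.
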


\noindent

The two cases of Proposition \ref{last proposition} occur due to the
isomorphism of any rank two filtered module which swaps its basis elements.
For our current normalization the results of Section
\ref{pot crystalline f semisimple} should be slightly modified: One should
only replace $efv_{p}(\alpha\delta)$ by $ev_{p}(Nm_{\varphi}(\vec{\alpha
})Nm_{\varphi}(\vec{\delta})),$ $efv_{p}(\alpha)$ by $ev_{p}(Nm_{\varphi}%
(\vec{\alpha}))$ and $ev_{p}(\delta)$ by $ev_{p}(Nm_{\varphi}(\vec{\delta})),$
where for a vector $\vec{a}$ we denote by $v_{p}(Nm_{\varphi}(\vec{a}))$ the
valuation of the product of its coordinates. For the rest of the section we
assume that our bases are standard with Frobenius as in Proposition
\ref{last proposition} and filtrations as in
(\ref{filtrations in crystalline example}) with $\vec{x}=f_{J_{\vec{x}}}$ and
$\vec{y}=f_{J_{\vec{y}}}.$ To avoid trivialities we assume that at least one
of the non negative weights $k_{i}$ is strictly positive. The following
corollary follows easily.

\begin{corollary}
\label{existance of inf families}Let $(D,\varphi)$ be an F-semisimple, weakly
admissible filtered $\varphi$-module of rank two over $K_{0}\otimes E $ with
labeled Hodge-Tate weights $(\{-k_{i},0\})_{\sigma_{i}}.$

$(1)$\ If $Tr(\varphi^{f})\in\mathcal{O}_{E}^{\times}$ then the corresponding
crystalline representation is reducible;

$(2)$ There exist infinite families of weakly admissible non isomorphic
F-semisimple rank two filtered $\varphi$-modules sharing the same
characteristic polynomial and filtration with $(D,\varphi)\ $if and only
if$\ \mid J_{\vec{x}}\cap J_{\vec{y}}\mid>1.\ $
\end{corollary}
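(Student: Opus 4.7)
For part (1), I work in the flexible normalization of Proposition~\ref{last proposition}, so that $[\varphi]_{\underline{\zeta}}=\mathrm{diag}(\vec{\alpha},\vec{\delta})$ with $\vec{x}=f_{J_{\vec{x}}}$, $\vec{y}=f_{J_{\vec{y}}}$; set $A:=Nm_{\varphi}(\vec{\alpha})$ and $D:=Nm_{\varphi}(\vec{\delta})\in E^{\times}$. Since $\varphi^{f}$ is $K_{0}\otimes E$-linear with matrix $\mathrm{diag}(A\cdot\vec{1},D\cdot\vec{1})$, its trace is $(A+D)\cdot\vec{1}$, so the hypothesis $Tr(\varphi^{f})\in\mathcal{O}_{E}^{\times}$ amounts to $v_{p}(A+D)=0$. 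The adapted weak admissibility inequalities (as listed right after Proposition~\ref{last proposition}) give $ev_{p}(A)\geq\sum_{\{i\,:\,y_{i}=0\}}k_{i}\geq 0$ and $ev_{p}(D)\geq\sum_{\{i\,:\,x_{i}=0\}}k_{i}\geq 0$, so both $v_{p}(A)$ and $v_{p}(D)$ are nonnegative; combined with $v_{p}(A+D)=0$, at least one must vanish. Assuming WLOG $v_{p}(A)=0$, the bound on $\sum_{\{i\,:\,y_{i}=0\}}k_{i}$ collapses to $0$, forcing $t_{H}(D_{1,L})=0=ev_{p}(A)=t_{N}(D_{1})$, so $D_{1}$ is a proper weakly admissible $\varphi$-stable subobject and the corresponding representation is reducible.

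For part (2), Proposition~\ref{last proposition} reduces the isomorphism relation, once the characteristic polynomial $\{A,D\}$ is fixed with $A\neq D$ (which excludes the swap branch of the proposition) and once the filtration data $J_{\vec{x}},J_{\vec{y}}$ are fixed, to the single projective equality
\[
f_{J_{\vec{x}}\cap J_{\vec{y}}}\cdot\vec{a}=f_{J_{\vec{x}}\cap J_{\vec{y}}}\cdot\vec{d}\ \text{in }\mathbb{P}^{m-1}(E),
\]
where $\vec{a},\vec{d}$ are the telescoping-ratio vectors built from the two $\vec{\alpha}$'s and $\vec{\delta}$'s of the proposition. Since the weak admissibility inequalities depend only on $A,D,J_{\vec{x}},J_{\vec{y}}$ and the Hodge-Tate weights, any family of $(\vec{\alpha},\vec{\delta})$ with fixed norms $A,D$ stays inside the WA locus. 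If $|J_{\vec{x}}\cap J_{\vec{y}}|\leq 1$, the projective constraint is vacuous (empty or a single point), so all candidates are isomorphic, proving the "only if" direction.

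For the converse, I fix the reference $(D,\varphi)$, keep $\vec{\delta}$ unchanged (so $\vec{d}=\vec{1}$), and vary $\vec{\alpha}'$ over the $(f-1)$-parameter family defined by $\beta_{i}=\alpha_{i}'/\alpha_{i}$ with $\prod_{i}\beta_{i}=1$; the coordinates of $\vec{a}$ before $\otimes e$ expansion are the partial products $\prod_{i<j}\beta_{i}$. Choosing $s_{1},s_{2}\in J_{\vec{x}}\cap J_{\vec{y}}$ whose projections to $\{0,\ldots,f-1\}$ under the index convention $s=fi+j$ are distinct, the ratio $a_{s_{1}}/a_{s_{2}}$ becomes a nontrivial Laurent monomial in the $\beta_{i}$'s and so takes infinitely many values as the $\beta_{i}$'s vary along a one-parameter curve; each value gives a distinct projective class and hence a non-isomorphic WA module in the family. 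The main technical obstacle is precisely the last step: the $\otimes e$ duplication in the definition of $\vec{a}$ collapses coordinates indexed by the same residue mod $f$, so one must exploit the hypothesis $|J_{\vec{x}}\cap J_{\vec{y}}|>1$ to extract two indices whose $f$-residues differ, which is what actually allows the variation of the $\beta_{i}$'s to register in the projective class on $J_{\vec{x}}\cap J_{\vec{y}}$.
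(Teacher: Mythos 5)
Part (1) is correct and follows the evident route: $Tr(\varphi^{f})=(A+D)\cdot\vec{1}$ with $A=Nm_{\varphi}(\vec{\alpha})$, $D=Nm_{\varphi}(\vec{\delta})$; the adapted admissibility bounds give $v_{p}(A),v_{p}(D)\geq 0$, so $v_{p}(A+D)=0$ forces one of them to vanish, collapsing the corresponding Hodge sum to zero and making $D_{1}$ or $D_{2}$ a weakly admissible $\varphi$-stable subobject.

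Part (2) is where the proposal breaks down, in the ``if'' direction. You vary the Frobenius vector $\vec{\alpha}$ at fixed norm while keeping the filtration vector fixed at $f_{J_{\vec{x}}}$ in the flexible normalization. As you yourself note, the resulting projective invariant, after the $\otimes e$ expansion, depends only on the classes $s\bmod f$ of the indices $s\in J_{\vec{x}}\cap J_{\vec{y}}$. You then assert that $\lvert J_{\vec{x}}\cap J_{\vec{y}}\rvert>1$ lets you ``extract two indices whose $f$-residues differ,'' but this implication is simply false when $e>1$: the set $J_{\vec{x}}\cap J_{\vec{y}}$ may well be, say, $\{j,\,j+f\}$, whose residues mod $f$ coincide, and then your entire $(f-1)$-parameter Frobenius family collapses projectively to a single isomorphism class even though $\lvert J_{\vec{x}}\cap J_{\vec{y}}\rvert=2$. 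The deeper reason is that rescaling $\eta_{1}$ by a vector in $E^{\lvert\mathcal{S}_{L_{0}}\rvert}$ (which is what one does to pass to the flexible normalization) can only multiply $\vec{x}\in E^{m}$ by an $\otimes e$-periodic vector; so the flexible normalization $\vec{x}=f_{J_{\vec{x}}}$ is not reachable for general $\vec{x}$ once $e>1$, and varying $\vec{\alpha}$ inside it produces only the $\otimes e$-periodic slice of the moduli.

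The correct construction stays in the constant-scalar standard basis of Proposition \ref{new shape of phi} and varies the filtration parameter $\vec{x}$ itself, not $\vec{\alpha}$. Fix $\alpha,\delta$, fix $J_{\vec{x}},J_{\vec{y}}$ and the labeled Hodge--Tate weights, and let $(x_{i})_{i\in J_{\vec{x}}\cap J_{\vec{y}}}$ range over $(E^{\times})^{\lvert J_{\vec{x}}\cap J_{\vec{y}}\rvert}$; by Propositions \ref{something} and \ref{t_H} all Newton and Hodge invariants of the $\varphi$-stable submodules depend only on $\alpha,\delta$ and the supports, so weak admissibility is undisturbed. Specializing the isomorphism criterion (\ref{n=0 or otherwise 1}) to $\alpha_{1}=\alpha_{2}$, $\delta_{1}=\delta_{2}$ gives exactly $f_{J_{\vec{x}}\cap J_{\vec{y}}}\cdot\vec{x}_{1}=f_{J_{\vec{x}}\cap J_{\vec{y}}}\cdot\vec{x}_{2}$ in $\mathbb{P}^{m-1}(E)$, so the isomorphism class is the $E^{\times}$-orbit of $(x_{i})_{i\in J_{\vec{x}}\cap J_{\vec{y}}}$, and there are infinitely many such orbits precisely when $\lvert J_{\vec{x}}\cap J_{\vec{y}}\rvert>1$. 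This also repairs your ``only if'' direction, whose argument was implicitly restricted to the flexible slice.
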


\noindent Let $k:=\sum\limits_{i=0}^{m-1}k_{i},$ and let $\pi\in E^{\times}%
\ $be an $e$-th root of $p.\ $Let $\alpha\in m_{E}$ with $\alpha^{2}\neq
4\pi^{k}$ so that the roots $\varepsilon_{0},\varepsilon_{1}\ $of
$X^{2}-\alpha X+\pi^{k}$ be distinct. Consider the rank two filtered $\varphi
$-modules $D\left(  \vec{\lambda},\vec{\mu}\right)  ,$ with $\vec{\lambda
},\vec{\mu}\in\left(  E^{\times}\right)  ^{f-1},$ with Frobenius endomorphisms
given by
\[
\lbrack\varphi]_{\underline{\eta}}=\text{diag}\left(  \left(  \lambda
_{0},\lambda_{1},...,\lambda_{f-2},\frac{\varepsilon_{0}}{\lambda_{0}%
\lambda_{1}\cdots\lambda_{f-2}}\right)  ,\left(  \mu_{0},\mu_{1},...,\mu
_{f-2},\frac{\varepsilon_{1}}{\mu_{0}\mu_{1}\cdots\mu_{f-2}}\right)  \right)
,
\]
and filtrations as in (\ref{filtrations in crystalline example}) with $\vec
{x}=\vec{y}=\vec{1}.\ $We have the following corollary.

\begin{corollary}
\label{b}\label{call me something} $(1)$ For any $\vec{\lambda},\vec{\mu}%
\in\left(  E^{\times}\right)  ^{f-1},\ $the filtered modules $D\left(
\vec{\lambda},\vec{\mu}\right)  $ are irreducible and weakly admissible;

$(2)$\ $D\left(  \vec{\lambda},\vec{\mu}\right)  \simeq D\left(  \vec{\lambda
}_{1},\vec{\mu}_{1}\right)  $ if and only if $\vec{\lambda}\cdot\vec{\mu}%
_{1}=\vec{\lambda}_{1}\cdot\vec{\mu};$

$(3)$\ The filtered modules $D\left(  \vec{1},\vec{\mu}\right)  $ with
$\vec{\mu}\in\left(  E^{\times}\right)  ^{f-1}$ are representatives of the
distinct isomorphism classes of all rank two weakly admissible filtered
modules with fixed characteristic polynomial $X^{2}-\alpha X+\pi^{k}$ and
filtration as in (\ref{filtrations in crystalline example}), with $\vec
{x}=\vec{y}=\vec{1}.$
\end{corollary}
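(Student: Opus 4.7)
The plan is to reduce all three parts of the corollary to the general classification results of Sections \ref{pot crystalline f semisimple} and \ref{last list} together with Proposition \ref{last proposition}. For part (1), I would first observe that $Nm_{\varphi}$ of the two diagonal entries of $[\varphi]_{\underline{\eta}}$ telescopes: the first product collapses to $\varepsilon_{0}$ and the second to $\varepsilon_{1}$, so $Nm_{\varphi}(\vec{\alpha})Nm_{\varphi}(\vec{\delta}) = \varepsilon_{0}\varepsilon_{1} = \pi^{k}$. This matches condition (i) of the modified weak admissibility criterion stated just before the corollary, since $ev_{p}(\pi^{k}) = k = \sum_{i}k_{i}$. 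Conditions (ii) and (iii) collapse to $v_{p}(\varepsilon_{0}) > 0$ and $v_{p}(\varepsilon_{1}) > 0$ because the right-hand sums vanish when $\vec{x} = \vec{y} = \vec{1}$; both strict inequalities follow from the hypothesis $\alpha \in m_{E}$ via a Newton polygon argument applied to $\varepsilon_{0}+\varepsilon_{1} = \alpha$ and $\varepsilon_{0}\varepsilon_{1} = \pi^{k}$ (neither root can have slope $0$ without forcing $v_{p}(\alpha) = 0$). Strictness of (ii) and (iii) then yields irreducibility via the criterion immediately following (\ref{1}).

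For part (2), I would apply Proposition \ref{last proposition} verbatim. Its ``swap'' alternative would require $Nm_{\varphi}(\vec{\alpha}_{1}) = Nm_{\varphi}(\vec{\delta}_{2})$, i.e. $\varepsilon_{0} = \varepsilon_{1}$, which is excluded by $\alpha^{2} \neq 4\pi^{k}$, so only the non-swap alternative can produce an isomorphism. In that alternative the norm equalities hold automatically and the conditions on $J_{\vec{x}_{i}}, J_{\vec{y}_{i}}$ are trivial since all four sets equal $I_{0}$. The remaining condition is the projective equality $\vec{a} = \vec{d}$ in $\mathbb{P}^{m-1}(E)$; because the $\otimes e$ operation merely repeats entries and both vectors have leading coordinate $1$, it becomes an honest coordinatewise equality in $E^{f}$. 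Forming the ratio of the $(i+1)$-th to the $i$-th coordinate collapses the telescoping products in the formulas of Proposition \ref{last proposition} and yields precisely $\lambda_{i}\mu_{1,i} = \lambda_{1,i}\mu_{i}$ for $i = 0,1,\ldots,f-2$, which is the claimed identity $\vec{\lambda}\cdot\vec{\mu}_{1} = \vec{\lambda}_{1}\cdot\vec{\mu}$.

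Part (3) then follows formally. Let $(D,\varphi)$ be any \textnormal{F}-semisimple weakly admissible module with the specified characteristic polynomial and with the fixed filtration $\vec{x} = \vec{y} = \vec{1}$, and write $[\varphi]_{\underline{\eta}} = \textnormal{diag}(\vec{\alpha},\vec{\delta})$. Since $\varepsilon_{0}\neq\varepsilon_{1}$, the multiset $\{Nm_{\varphi}(\vec{\alpha}), Nm_{\varphi}(\vec{\delta})\}$ equals $\{\varepsilon_{0},\varepsilon_{1}\}$, and swapping the two basis vectors if necessary (which does not disturb the symmetric filtration) I may assume $Nm_{\varphi}(\vec{\alpha}) = \varepsilon_{0}$. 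Setting $\vec{\lambda},\vec{\mu}$ to be the first $f-1$ coordinates of $\vec{\alpha},\vec{\delta}$, the final coordinates are then forced by the norm condition, so $D \cong D(\vec{\lambda},\vec{\mu})$, and part (2) delivers $D(\vec{\lambda},\vec{\mu}) \cong D(\vec{1},\vec{\mu}/\vec{\lambda})$. Non-redundancy of the list $\{D(\vec{1},\vec{\mu})\}_{\vec{\mu}}$ is immediate from (2) specialized to $\vec{\lambda} = \vec{\lambda}_{1} = \vec{1}$.

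The only genuinely delicate step is part (2): one has to track how the telescoping formulas of Proposition \ref{last proposition} simplify once the last coordinates of $\vec{\alpha}$ and $\vec{\delta}$ are pinned down by the prescribed norm, and recognize the resulting projective equality as equivalent, via successive-ratio quotients, to the simple coordinatewise relation on $(\vec{\lambda},\vec{\mu})$. All other steps are either straightforward bookkeeping or direct quotations of results already established in the paper.
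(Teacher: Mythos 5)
Your proof is correct; the paper states this corollary without a written proof, treating it as an immediate consequence of Proposition \ref{last proposition} and the weak admissibility criteria, and your argument carries out precisely that derivation. The telescoping of $Nm_{\varphi}$ giving $\varepsilon_{0},\varepsilon_{1}$, the exclusion of the swap alternative via $\varepsilon_{0}\neq\varepsilon_{1}$ (from $\alpha^{2}\neq4\pi^{k}$), the Newton polygon argument for strictness of conditions (ii) and (iii), and the reduction of the projective equality $\vec{a}=\vec{d}$ (both with leading coordinate $1$) to the coordinatewise relation $\lambda_{j}\mu_{1,j}=\lambda_{1,j}\mu_{j}$ via successive ratios are all correct and are exactly the steps the paper relies on.
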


\begin{corollary}
\label{a}If $K\neq%
\mathbb{Q}
_{p}$ there exist (infinitely many) disjoint infinite families of irreducible
$2$-dimensional crystalline $E$-representations of $G_{K},$ sharing the same
characteristic polynomial and filtration.
\end{corollary}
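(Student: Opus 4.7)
The idea is to apply Corollary \ref{call me something} to produce a single infinite collection of pairwise non-isomorphic irreducible weakly admissible rank-two filtered $\varphi$-modules sharing one fixed characteristic polynomial and one fixed filtration, and then partition that collection set-theoretically into infinitely many disjoint infinite subfamilies, all inheriting that common characteristic polynomial and filtration.

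First I fix weights $k_0,\dots,k_{m-1}\geq 0$ with $k:=\sum_i k_i>0$, and choose $\alpha\in m_E$ with $\alpha^2\neq 4\pi^k$ so that $X^2-\alpha X+\pi^k$ has distinct roots $\varepsilon_0,\varepsilon_1$. By Corollary \ref{call me something} parts (1) and (3), the filtered $\varphi$-modules $D(\vec 1,\vec\mu)$ for $\vec\mu\in(E^\times)^{f-1}$ are pairwise non-isomorphic, irreducible, and weakly admissible, all with the same characteristic polynomial $X^2-\alpha X+\pi^k$ and the same filtration (namely $(\ref{filtrations in crystalline example})$ with $\vec x=\vec y=\vec 1$). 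The Colmez--Fontaine equivalence recalled in the introduction then lifts this to an infinite set $\mathcal V$ of pairwise non-isomorphic irreducible $2$-dimensional crystalline $E$-representations of $G_K$, all sharing that single characteristic polynomial and that single Hodge filtration.

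Since $K\neq\mathbb Q_p$ we have $m=ef\geq 2$. When $f\geq 2$ the parameter space $(E^\times)^{f-1}$ is uncountable, and any partition of it into infinitely many infinite disjoint subsets---for example, grouping by the valuation of the first coordinate,
\[
(E^\times)^{f-1}=\bigsqcup_{n\in\mathbb Z}\{\vec\mu:v_p(\mu_1)=n\},
\]
each piece being uncountable---pulls back through the bijection $\vec\mu\mapsto V_{\vec\mu}$ to a partition of $\mathcal V$ into infinitely many disjoint infinite families of irreducible crystalline representations with the required properties. The residual case $f=1$, $e\geq 2$ is not covered by the $D(\vec 1,\vec\mu)$ parametrization (which is trivial then); there I would instead fix $\alpha,\delta\in E^\times$ with $\alpha\neq\delta$ and vary the filtration vector $\vec x\in(E^\times)^e$ (keeping $\vec y=\vec 1$), and invoke Corollary \ref{existance of inf families}(2) together with $|J_{\vec x}\cap J_{\vec y}|=e\geq 2$ to produce the analogous infinite collection of pairwise non-isomorphic weakly admissible filtered modules sharing the same characteristic polynomial and the same labeled Hodge--Tate weights, which one then partitions as before.

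The only substantive mathematical content is Corollary \ref{call me something} itself (and, for the $f=1$ edge case, Corollary \ref{existance of inf families}(2)); once such an infinite master family with common characteristic polynomial and filtration is in hand, the existence of infinitely many disjoint infinite subfamilies is a purely set-theoretic partition. The main obstacle is therefore just the edge case $f=1$, $e\geq 2$---in particular, accepting that ``same filtration'' there must be read as ``same labeled Hodge--Tate weights,'' since strictly modifying $\vec x$ changes the ambient filtration; in the principal case $f\geq 2$ everything is routine.
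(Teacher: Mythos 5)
Your main case ($f\geq 2$) is essentially the paper's own argument: Corollary \ref{call me something} is the engine, and the passage to crystalline representations via the Colmez--Fontaine equivalence is as intended. Two remarks, though. First, the intended source of \emph{infinitely many disjoint} families is not a set-theoretic partition of one family: one varies the characteristic polynomial (the trace $\alpha\in m_{E}$, or the weights $k_{i}$), and for each choice Corollary \ref{call me something}(3) produces an infinite family with that fixed characteristic polynomial and filtration; distinct choices give disjoint families because their characteristic polynomials differ. Your partition of $(E^{\times})^{f-1}$ by $v_{p}(\mu_{1})$ does literally satisfy the statement, but it adds no content beyond the existence of a single infinite family, so it rather trivializes the clause it is meant to establish.

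The substantive issue is the totally ramified case $f=1$, $e\geq 2$, and there your patch does not prove the corollary as stated. Corollary \ref{existance of inf families}(2) cannot be cited in support of it: that corollary keeps the filtration fixed and (implicitly) varies Frobenius, whereas you vary $\vec{x}$, which changes the filtration. Moreover, with the filtration literally fixed there is nothing left to vary when $f=1$: Frobenius $\varphi=\varphi^{f}$ is then $E$-linear, fixing the characteristic polynomial fixes $\mathrm{diag}(\alpha,\delta)$ outright, and in Proposition \ref{last proposition} the comparison vectors $\vec{a},\vec{d}$ collapse to $\vec{1}^{\otimes e}$, so any two F-semisimple modules with the same characteristic polynomial and the same filtration data are isomorphic. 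Consequently your fallback only yields representations sharing the characteristic polynomial and the labeled Hodge--Tate weights, which is a weaker assertion than ``same filtration''; as you concede, this is a change of statement, not a proof of it. To be fair, the paper's own implicit deduction of Corollary \ref{a} from Corollary \ref{call me something} has the same lacuna, since $(E^{\times})^{f-1}$ is a single point when $f=1$; the phenomenon in its literal form requires $f\geq 2$ (more precisely, it requires $J_{\vec{x}}\cap J_{\vec{y}}$ to meet at least two residue classes modulo $f$, which is the accurate form of the criterion in Corollary \ref{existance of inf families}(2)). So your argument is complete and coincides with the paper's for $f\geq 2$, but in the $f=1$ case the honest conclusion is that the literal statement is not obtained -- by your construction or by the paper's -- and this should be flagged rather than absorbed by silently reinterpreting ``same filtration.''
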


\section*{Appendix\label{Appendix 1}}

\noindent\textbf{The potentially crystalline }$E^{\times}$\textbf{-valued
characters of \ }$G_{K}.\boldsymbol{\ }$Let $k_{0},k_{1},...,\noindent
k_{m-1}$ be arbitrary integers. Assume that there exists $\varpi\in E^{\times
}$ such that $\varpi^{em}=p^{\sum\limits_{i=0}^{m-1}k_{i}}.$ \noindent The
weakly admissible rank one filtered $(\varphi,L/K,E)$-modules with labeled
Hodge-Tate weights $(-k_{i})_{\sigma_{i}}$ are of the form $D=(\prod
\limits_{\ \ S_{L_{0}}}E)\eta$ with $\varphi(\eta)=u(\varpi,\varpi
,...,\varpi)\eta$ for some $\allowbreak u\in E^{\times}$ with $v_{p}(u)=0$
and, $g(\eta)=(\chi(g)\cdot\vec{1})\eta$ for some character

\noindent$\chi:$ Gal$(L/K)\rightarrow E^{\times}.$ Their filtrations are given
by
\[
\text{Fil}^{j}(D_{L})=\left\{
\begin{array}
[c]{l}%
\left(  E^{\mid S_{L}\mid}\right)  (1\otimes\eta)\ \text{ \ \ \ \ \ if\ }j\leq
w_{0},\\
\left(  E^{\mid S_{L}\mid_{I_{1}}}\right)  (1\otimes\eta)\ \ \ \ \text{ if
\ }1+w_{0}\leq j\leq w_{1},\\
\ \ \ \ \ \ \ \ \ \ \ \ \ \text{\ \ \ \ }\cdots\cdots\cdots\\
\left(  E^{\mid S_{L}\mid_{I_{t-1}}}\right)  (1\otimes\eta)\text{
\ if\ }1+w_{t-2}\leq j\leq w_{t-1},\\
\ \ \ \ 0\text{ \ \ \ \ \ \ \ \ \ \ \ \ \ \ \ \ \ \ \ \ \ if\ }j\geq1+w_{t-1},
\end{array}
\right.
\]
where the sets $I_{r}$ are unions of Gal$(L/K)$-orbits for all $r.$ Denote
such a filtered module by $(D_{u},\chi).$ Then $(D_{u},\chi)\simeq(D_{v}%
,\psi)$ if and only if \noindent(i) $u^{f}=v^{f}$ and (ii) $\chi
(g)=\varepsilon^{n(g)}\psi(g)$ for all $g\in G,$ where $\varepsilon=uv^{-1}.$

\section*{Acknowledgement}

\noindent Part of the paper was written during a visit at Max-Planck Institut
f\"{u}r Mathematik supported by MRTN-CT-2003-504917, AAG Network. I thank the
Max-Planck Institute for providing an ideal environment to work throughout my
stay there. I am indebted to the anonymous referee for detailed comments, for
pointing out minor mistakes, and for making numerous useful suggestions for
improvements which have been incorporated in the text.

\end{document}